\numberwithin{equation}{section}
\newtheorem{theorem}{\bf Theorem}[section]
\newtheorem{defn}{\bf Definition}[section]
\newtheorem{corollary}{\bf Corollary}[section]
\newtheorem{prop}{\bf Proposition}[section]
\newtheorem{lemma}{\bf Lemma}[section]
\newtheorem{example}{\bf Example}[section]
\newcommand{\linfty}{\mbox{\scriptsize
\raise -1mm \hbox{$\infty$}}}
\newcommand{\lbinfty}{\mbox{\scriptsize
\raise -0.5mm \hbox{$\infty$}}}
\begin{document}

\title{Limiting stochastic processes of shift-periodic dynamical systems}

\date{}

\author{Julia Stadlmann\thanks{Merton College, Merton Street, Oxford, OX1 4JD, 
United Kingdom; e-mail: julia.stadlmann@merton.ox.ac.uk}
\hskip 1cm
and 
\hskip 1cm
Radek Erban\thanks{Mathematical Institute, University of Oxford, Radcliffe Observatory 
Quarter, Woodstock Road, Oxford, OX2 6GG, United Kingdom; e-mail: erban@maths.ox.ac.uk}
}

\maketitle

\begin{abstract}
\noindent
A shift-periodic map is a one-dimensional map from the real line to itself 
which is periodic up to a linear translation and allowed to have 
singularities.  It is shown that iterative sequences $x_{n+1}=F(x_n)$ generated by 
such maps display rich dynamical behaviour.   The integer parts $\lfloor x_n \rfloor$ give a discrete-time random walk for a suitable initial distribution of $x_0$ and converge in certain 
limits to 
Brownian motion or more general L\'evy processes. Furthermore, for  certain shift-periodic maps with small holes on $[0,1]$, convergence of trajectories to a continuous-time random walk is shown in a limit.
\end{abstract}


\section{Introduction} \label{sec:intro}

Dynamical systems and their stochastic properties have been studied 
for more than a hundred years, starting with the pioneering works of 
Poincar\'e. He first connected probabilistic concepts with dynamics, 
conjecturing the Poincar\'e recurrence theorem~\cite{Poincare:1890:RT}. 
Major advances in the field were made in the 1930s by 
Birkhoff~\cite{Birkhoff:1931:ET} and von Neumann~\cite{Neumann:1932:QET}, 
via the proof of so called ergodic theorems, concerning time averages 
of functions along trajectories. Birkhoff also first used topological 
methods for the study of dynamical systems. In these early years 
differential equations were often the main focus of the study of dynamical 
systems. However, since the 1970s attention also turned to simple dynamical 
systems, generated iteratively from a map $F:\Omega \rightarrow \Omega$ 
via an equation 
\begin{equation} \label{itseq}
x_{n+1}=F(x_n),
\end{equation} 
where $\Omega$ has been taken to be a low-dimensional set~\cite{Kaplan:1979:CBM},  
 such as 
interval $[0,1]$. It has been observed that even 
very simple maps and systems can give rise to complicated, seemingly 
random behaviour of trajectories, a phenomenon Yorke and Li $\;$named 
$\;$"chaos"
in their seminal paper~\cite{Li:1975:P3}. 
 A well-studied example of this phenomenon is given by the logistic map $F(x;r)= rx(1-x)$, $x \in [0,1]$, where $r \in (0,4]$ is a parameter. 
Depending on the value of $r$, it displays a wide array of behaviour 
of trajectories, highly sensitive to the initial value~\cite{May:1976:SMM}. Another interesting function is the climbing sine map, defined by $F(x;a)=x+a\sin(2\pi x)$, $x \in \mathbb{R}$, where $a >0$ is a parameter. Due to being defined on an unbounded set, it can display diffusive behaviour for large enough values of $a$. Varying its parameter $a$, the dynamics of the climbing sine map can range from localized orbits to ballistic dynamics and chaotic diffusion~\cite{Korabel:2002:FSM, Korabel:2004:FSM}.

The climbing sine map is an example of a one-dimensional map satisfying $F(x+1)=F(x)+1$ for $x \in \mathbb{R}$. Such maps have been studied in the context of dynamical systems since the 1980s, when diffusion constants, drift velocities, bifurcations and cycles were first investigated for specific maps and parameter families of such maps  \cite{Geisel:1982:PRL,Grossmann:1982:PhA, Grossmann:1982:PhB, Schell:1982:PhA}. 
In discussions of different types of dynamical behaviour on the real line, a number of authors take the approach of defining a map $F$ first on a unit length interval, and then extending it to the real line by relation $F(x+1)=F(x)+1$. A~notable example is the Pomeau--Manneville
 map, which first appears in \cite{Pomeau:1980:ITT} as a model exhibiting intermittency, apparently periodic dynamics interrupted by chaotic behaviour. Defined on interval $[0,1/2)$ by $F(x)=(1+\varepsilon)x+ax^{b}-1$, where $a,b \geq 1, \varepsilon \geq 0$ are parameters, it is extended to the real line by requiring $F(-x)=-F(x)$ and $F(x+1)=F(x)+1$. The trajectories display anomalous diffusion and can be analysed in terms of L\'evy walks~\cite{Geisel:1985:FPP,Klafter:1993:DGD,Klafter:1993:LDS}.
A different viewpoint give the works of Misiurewicz, who used maps of the type $F(x+1)=F(x)+1$ to study cycles of functions on the circle, particularly circle versions of Sharkovskii's theorem \cite{Misiurewicz:1984:PPC, Misiurewicz:1984:TSC}. 
  More recently, a number of papers have been dedicated to computation of diffusion constants using Green-Kubo relations, particularly of the aforementioned climbing sine map and Pomeau-Manneville map~\cite{Klages:2007:FPP,Klages:2010:LFD, Klages:2012:DCD}. 
  Maps $F:\mathbb{R} \rightarrow \mathbb{R}$ following a more  general pattern on each interval $[i,i+1]$, more complicated than the discussed integer shift, have also been constructed in the literature to illustrate subdiffusive and superdiffusive dynamics, see  \cite{Klages:2015:SDS} for a recent discussion in the context of polygonal billiards. However, we restrict our attention to maps satisfying $F(x+1)=F(x)+1$. There seems to be no consensus on the name of such functions, so we will subsequently call them shift-periodic, provided they additionally satisfy a minor technical restriction introduced in Section~\ref{sec:spm}. 

This paper is dedicated to studying stochastic processes generated by repeated application of a shift-periodic map, and particularly the random walk-like behaviour of the resulting trajectories.
Although the sequence $(x_n)$ generated from equation (\ref{itseq}) is
fully deterministic for given $x_0$, it can be viewed 
as a discrete-time stochastic process when its initial value $x_0$ is 
chosen according to a probability distribution on domain $\Omega$ of the underlying map $F$. A common choice is an invariant distribution with respect to $F$, which allows us to study equilibrium behaviour of trajectories. However, the shift-periodicity of our maps leads to another natural choice of initial distribution, namely one invariant with respect to the fractional parts of $F$ on $[0,1]$. This is the initial distribution we will work with in most of the paper. We will also consider the behaviour of continuous-time stochastic processes appearing as a limit under a suitable scaling in time and space, explained in Section~\ref{sec:rw}. Motivation for this is the well-known result that Brownian motion is obtained as a limit after an appropriate scaling for a certain class of maps, as demonstrated by 
Beck and Roepstorff in~\cite{Beck:1987:DSL, Beck:1990:BMD}, and further investigated by various other authors, for example by Mackey and 
Tyran-Kami\'nska~\cite{Mackey:2006:DBM,Tyran:2014:DDS}.

The paper is structured as follows: We first introduce in Section~\ref{sec:spm} shift-periodic maps using a couple of examples important for later discussions. In Section~\ref{sec:rw} we then move on to considering a certain class of shift-periodic maps which admit an infinite Markov partition and show that their trajectories behave like discrete-time random walks for initial distributions invariant with respect to the fractional parts of the original map. We also point out conditions on these shift-periodic maps which ensure that certain continuous-time stochastic processes arise in a scaling limit. Finally, in Section~\ref{sec:CRW} we study shift-periodic maps with small holes and show that in an appropriate scaling limit we obtain the behaviour of a continuous-time random walk.
\vskip 5mm

\noindent
{\bf Notation.}
We denote $\mathbb{R} \cup \{ \infty \} \cup \{ - \infty \}$ by $\mathbb{R}_\infty$ and $\mathbb{Z} \cup \{ \infty \} \cup \{ - \infty \}$ by $\mathbb{Z}_\infty$. 
For any $x \in \mathbb{R}$ let $\{ x \}$ denote the fractional part of $x$ 
and $\lfloor x \rfloor$ denote the integer part of $x$. For any Lebesgue 
measurable set $A$ we denote its Lebesgue measure by $\lambda(A)$. As it 
is common in the literature, the symbol $\sim$ will be used in two different 
contexts.  First, for 
functions $f(x)$ and $g(x)$ we write $f \sim g$ if $f(x)/g(x) \to 1$ 
as $x \to \infty$. Second, we also use symbol $\sim$ to 
specify the distribution of a random variable, for example, $X \sim N(0,1)$ 
means that random variable $X$ is normally distributed with zero mean and 
unit variance. In Section \ref{sec:CRW} we also make use of the sup-norm on 
the space of bounded functions from $[0,1]$ to $\mathbb{R}$, defined 
by $||f||_{\linfty}=\sup_{x\in [0,1]} |f(x)|$.

\section{Shift-periodic maps} \label{sec:spm}

This paper studies the behaviour of iterative sequences given 
by~(\ref{itseq}) for functions $F$ defined on the real line, which 
are periodic up to integer shifts.  The key property of maps $F$ is 
a shift-periodic formula given
in the next definition as condition (i), together with a minor technical 
restriction (ii) on discontinuities of $F$. Note that, unlike
in other works on this topic in the literature, we allow $F$ to 
have singularities.

\begin{defn}
\label{shpema}
A shift-periodic map is a map $F: \mathbb{R} \rightarrow \mathbb{R}_{\infty}$ with the following properties:

{\parindent -5mm 
\leftskip 12mm
{\rm (i)} $F(x) = F(\{x\})+\lfloor x \rfloor$ for all $x \in \mathbb{R}$;

\parindent -6mm 
{\rm (ii)} There exist $0=t_0<t_1< \dots < t_{k}=1$ so that for ${i=1,2, \dots, k}$ map $F$ is continuous and 
monotonic on $(t_{i-1},t_{i}).$ 
\par
\leftskip 0mm}
\end{defn}

\noindent
Points $t_{i},$ 
$i=0,1,2, \dots, k$, in Definition~\ref{shpema}(ii) are local extrema, discontinuities or singularities of map $F$,  where by singularities we mean $F(t_{i})=\infty$ or  $F(t_{i})=-\infty.$

\begin{example}
{\rm A well-studied example of a shift-periodic map~\cite{Geisel:1982:PRL,Grossmann:1982:PhA,  Schell:1982:PhA}, is the climbing sine map mentioned in Section \ref{sec:intro}, defined by 
$$F(x;a):=x+a \sin(2\pi x).$$
For parameter values $a > 0.732644 \dots$, the image of $[0,1]$ under $F(x;a)$ will not be contained in $[0,1]$, allowing jumps between intervals $[i,i+1)$, $i \in \mathbb{Z}$, and diffusion on the unbounded domain. Its dynamical properties have been studied extensively by variours authors, particularly its diffusion coefficient $D(a)=\lim_{n \rightarrow \infty} \mathbb{E}[x_n^2]/(2n)$, where $x_n$ is a random variable generated via equation (\ref{itseq}), with initial value $x_0$ distributed according to the invariant density with respect to fractional parts $\{F(x;a)\}$. Diffusion coefficient $D(a)$ has a complicated structure, fractal in nature, demonstrated by Korabel and Klages in~\cite{Korabel:2002:FSM, Korabel:2004:FSM}. 

More generally, the periodicity of sine and cosine functions gives us an easy way to construct a wide variety of shift-periodic maps. Take two  polynomials $p(x)$ and $q(x)$, the latter non-zero, and set $$F(x)=x+\dfrac{p(\sin(2\pi x))}{q( \cos(2\pi x))}.$$ $F$ will satisfy both of the conditions of Definition \ref{shpema}. For example, taking $p(x)=q(x)=x$, we get $F(x)=x+\tan(2\pi x)$. In Figure~\ref{figure2}(b) a sample trajectory of this map is plotted in red.}
\end{example}

\noindent
The non-linearity of the climbing sine map and other climbing trigonometric functions however complicates the discussion of invariant densities and the behaviour of iterates in general. So below, in Example~\ref{example1}, we discuss a piecewise linear map $F(x;\varepsilon,\delta)$ of a similar structure. 
\begin{figure}[t]
\leftline{
\hskip 1mm
\epsfig{file=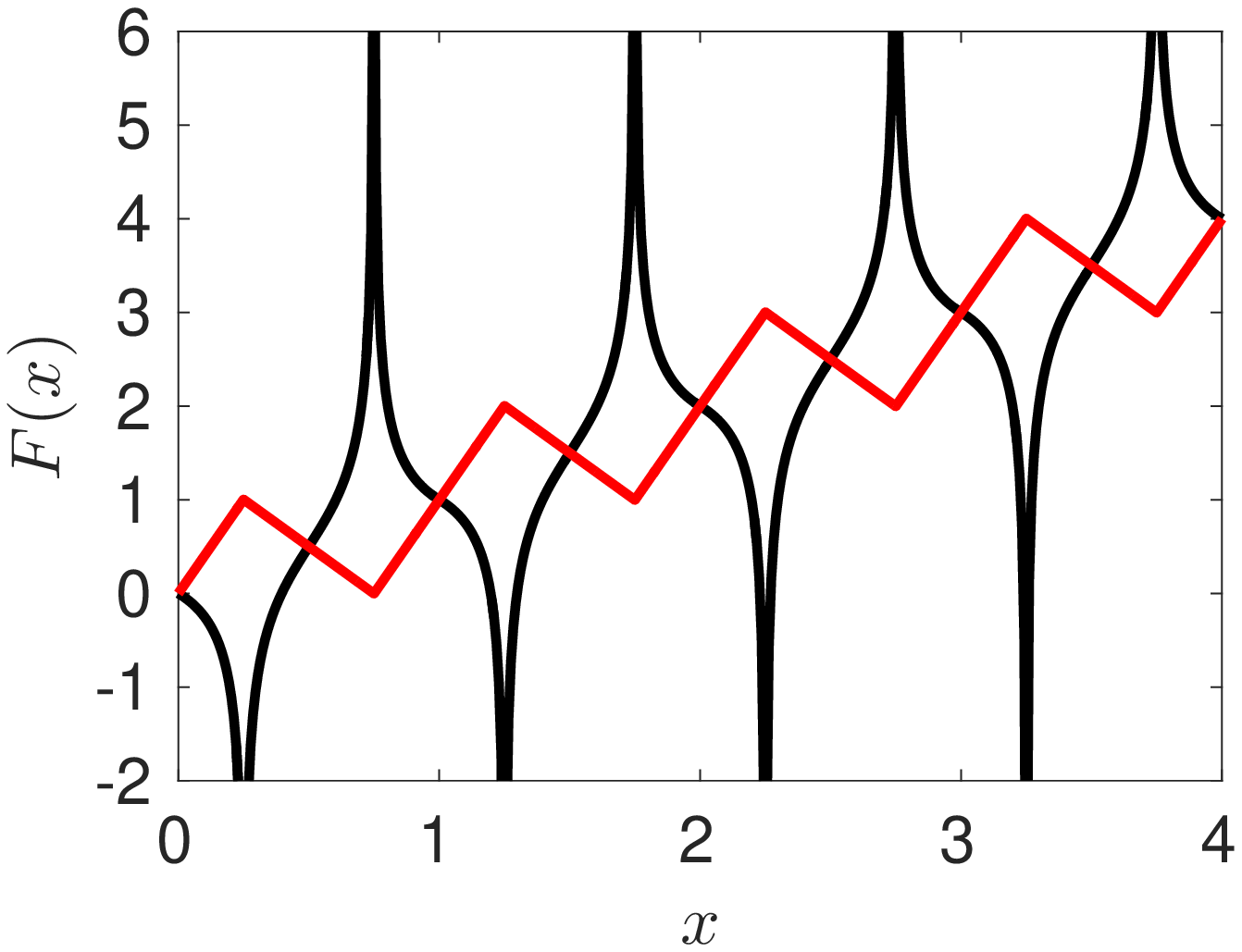,height=5.7cm}
\hskip -3mm
\epsfig{file=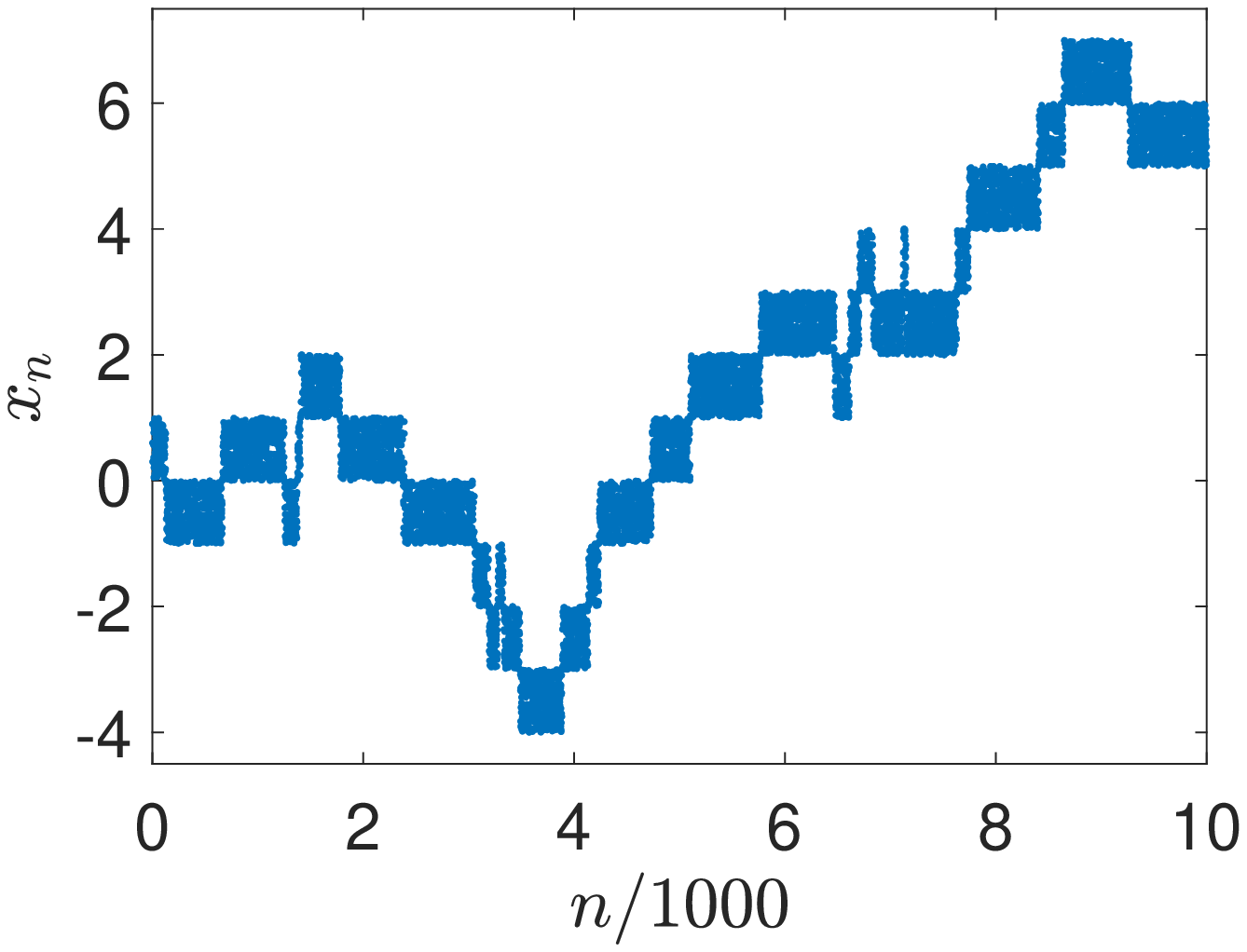,height=5.7cm}
}
\vskip -5.8cm
\leftline{(a) \hskip 7.3cm (b)}
\vskip 5.2cm
\caption{(a) {\it Two examples of shift-periodic maps. Continuous piecewise linear map 
$F(x;\varepsilon,\delta)$ given in Example~{\rm \ref{example1}} for $\delta = \varepsilon = 10^{-2}$
(red solid line) and shift-periodic map with singularities $F(x; \kappa)$ given 
in Example~{\rm \ref{example2}} for $\kappa=1$ (black solid line).
} \hfill\break
(b) {\it Illustrative dynamics of shift-periodic map $F(x;\varepsilon,\delta)$ 
from Example~{\rm \ref{example1}} for $\delta = \varepsilon = 10^{-2}$. 
The first $10^4$ iterations $x_{n+1} = F(x_n; 10^{-2}, 10^{-2})$ are plotted for 
initial condition $x_0=0.9$.}
}
\label{figure1}
\end{figure}%

\begin{example}
\label{example1}
{\rm We consider piecewise linear map 
$F: \mathbb{R} \rightarrow \mathbb{R}$ with parameters $\delta >0$ and
$\varepsilon>0$, which is defined on $[0,1]$ by
\vskip -5mm 
$$F(x;\varepsilon,\delta) := 
\begin{cases}
 (4+\varepsilon)x, & \text{ if } \quad x \in \Big[0,\dfrac{1}{4} \Big); \\  
 (-2-\varepsilon)x+\dfrac{(3+\varepsilon)}{2},\rule{0pt}{5mm} & \text{ if } \quad x \in \Big[\dfrac{1}{4},\dfrac{1}{2}\Big); \rule{0pt}{6mm}\\    
 (-2-\delta)x+\dfrac{(3+\delta)}{2},\rule{0pt}{5mm}  & \text{ if } \quad x \in \Big[\dfrac{1}{2},\dfrac{3}{4}\Big); \rule{0pt}{6mm}\\    
 (4+\delta)x-(3+\delta),\rule{0pt}{5mm}  & \text{ if } \quad x \in \Big[\dfrac{3}{4},1\Big], \rule{0pt}{6mm}  
\end{cases}
$$
and with $F(x;\varepsilon,\delta) = F(\{x\};\varepsilon,\delta)+\lfloor x \rfloor$ for $x \in {\mathbb R}$.}
\end{example}
\noindent
The map $F(x;\varepsilon, \delta)$ has one local maximum, one local minimum in interval
$[0,1]$ and maps interval $[0,1]$ to a larger interval, $[-\delta/4,1+\varepsilon/4]$, for
parameters $\delta >0$ and $\varepsilon>0$. It is plotted
in Figure~\ref{figure1}{\rm (a)} (as a red solid line). Under the name sawtooth map similar examples have appeared in the literature, discussing diffusion coefficients, which have a much simpler structure than those of nonlinear maps~\cite{Grossmann:1982:PhB}.  We are interested in this map for a different reason: It demonstrates behaviour in its iterates (\ref{itseq}) which closely resembles that of a random walk. 
Choosing relatively small values $\delta = \varepsilon = 10^{-2}$, first $10^4$ iterations of map from 
Example~\ref{example1} are shown in Figure~\ref{figure1}(b). Identifying intervals $[i,i+1)$ with 
integer valued lattice points $\{i\}$ for $i \in {\mathbb Z}$, we observe that sequence $x_n$ can 
be viewed as a random walk between these lattice points. More precisely, we can map sequence
$x_n$ to integer-valued sequence by $\lfloor x_n \rfloor$, which gives lattice positions of
a random walker that is jumping from site $\{i\}$ to neighbouring sites 
$\{i-1\}$ and $\{i+1\}$ with certain probabilities. 
 Such behaviour is common among trajectories of shift-periodic maps and under certain conditions on $F$ the jumps between sides are independent, as is traditionally required of random walks. This will be discussed in Section~\ref{sec:rw}. The map in Example~\ref{example1} does not satisfy this independence condition, but attains the structure of a continuous-time random walk with independent waiting times in a suitable limit.  Section~\ref{sec:CRW} is dedicated to this result.

Finally, a more general example of a shift-periodic map, in the spirit of the climbing tangent map, is illustrated in Figure~\ref{figure1}{\rm (a)} as 
a black solid line and is formally defined as Example~\ref{example2}. It has 
two singularities in $[0,1]$, at one of them approaching $\infty$ and at the
other one approaching $-\infty$ and maps interval $[0,1]$ to ${\mathbb R}_\infty$.

\begin{example}
\label{example2}
{\rm We consider
$F: \mathbb{R} \rightarrow \mathbb{R}_\infty$ with parameter $\kappa >0$, defined
on $[0,1]$ by
$$
F(x; \kappa)=
\dfrac{4^{-1/\kappa}}{2(1-3^{-1/\kappa})} 
\left(
\left|x-\dfrac{3}{4}\right|^{-1/\kappa}
-
\left|x-\dfrac{1}{4}\right|^{-1/\kappa}
\right)+\dfrac{1}{2},
$$
and with $F(x;\kappa) = F(\{x\};\kappa)+\lfloor x \rfloor$ for $x \in {\mathbb R}$.
The prefactor is chosen so that $F(0;\kappa)=0$ and $F(1;\kappa)=1$. }
\end{example}

\noindent
In Figure~\ref{figure2}, we plot illustrative trajectories for two different values of $\kappa$.
For large $\kappa$ (panel (a)), the behaviour of iterations  $x_{n+1} = F(x_n; \kappa)$ 
resembles Brownian motion, while for small $\kappa$ (panel (b), blue dots) it resembles a L\'evy flight. 
We write "resembles" since we compare discrete dynamics with continuous time stochastic processes.
In Section~\ref{sec:rw}(f) we make these statements rigorous. To do this, 
we identify the index $n$ in $x_n$ with time and introduce suitable scaling of time to get convergence to a continuous time process. 
Before that, we study the random walk behaviour of a certain class
of shift-periodic maps when time is left unscaled.

\begin{figure}[t]
\leftline{
\hskip -2mm
\epsfig{file=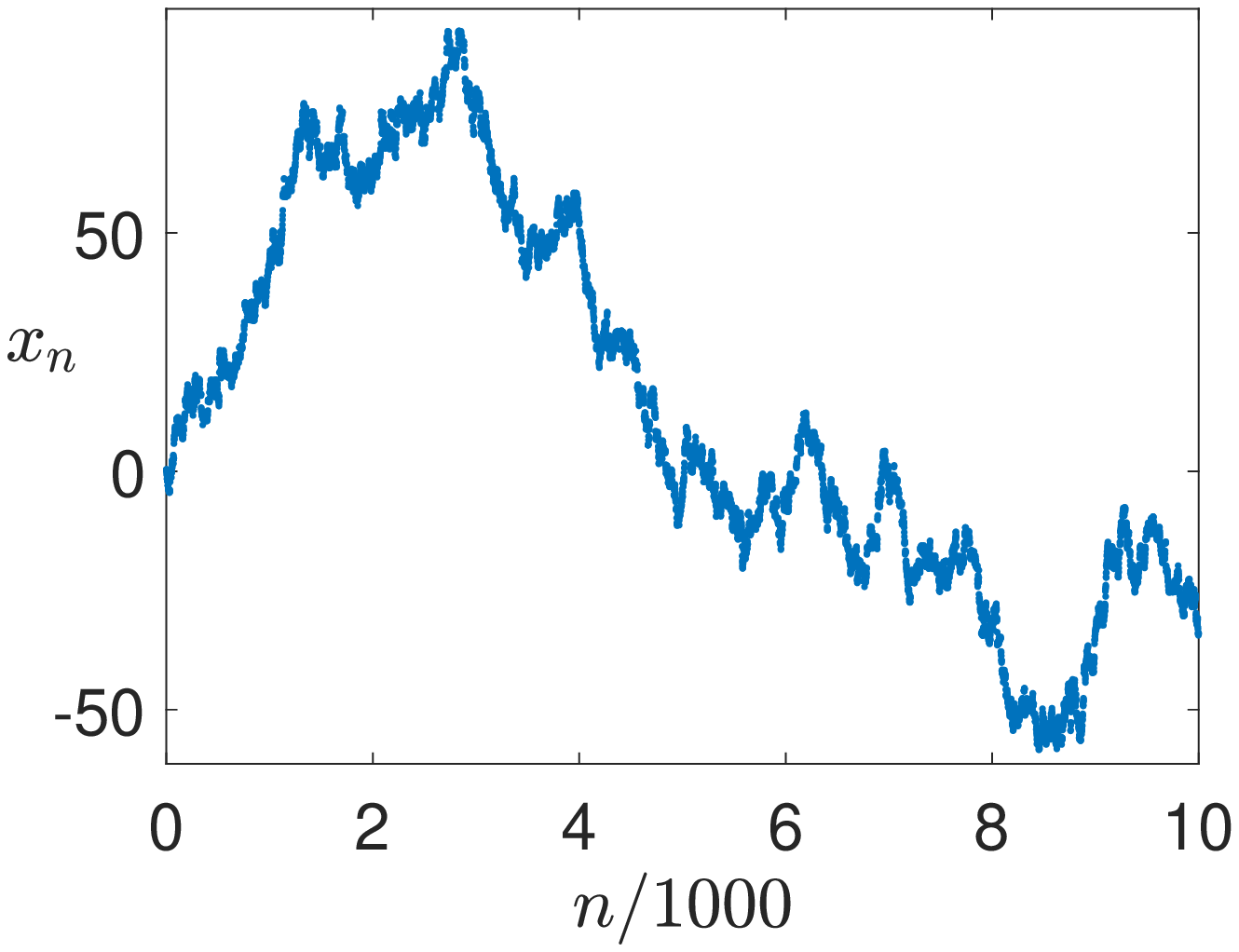,height=5.7cm}
\hskip 0mm
\epsfig{file=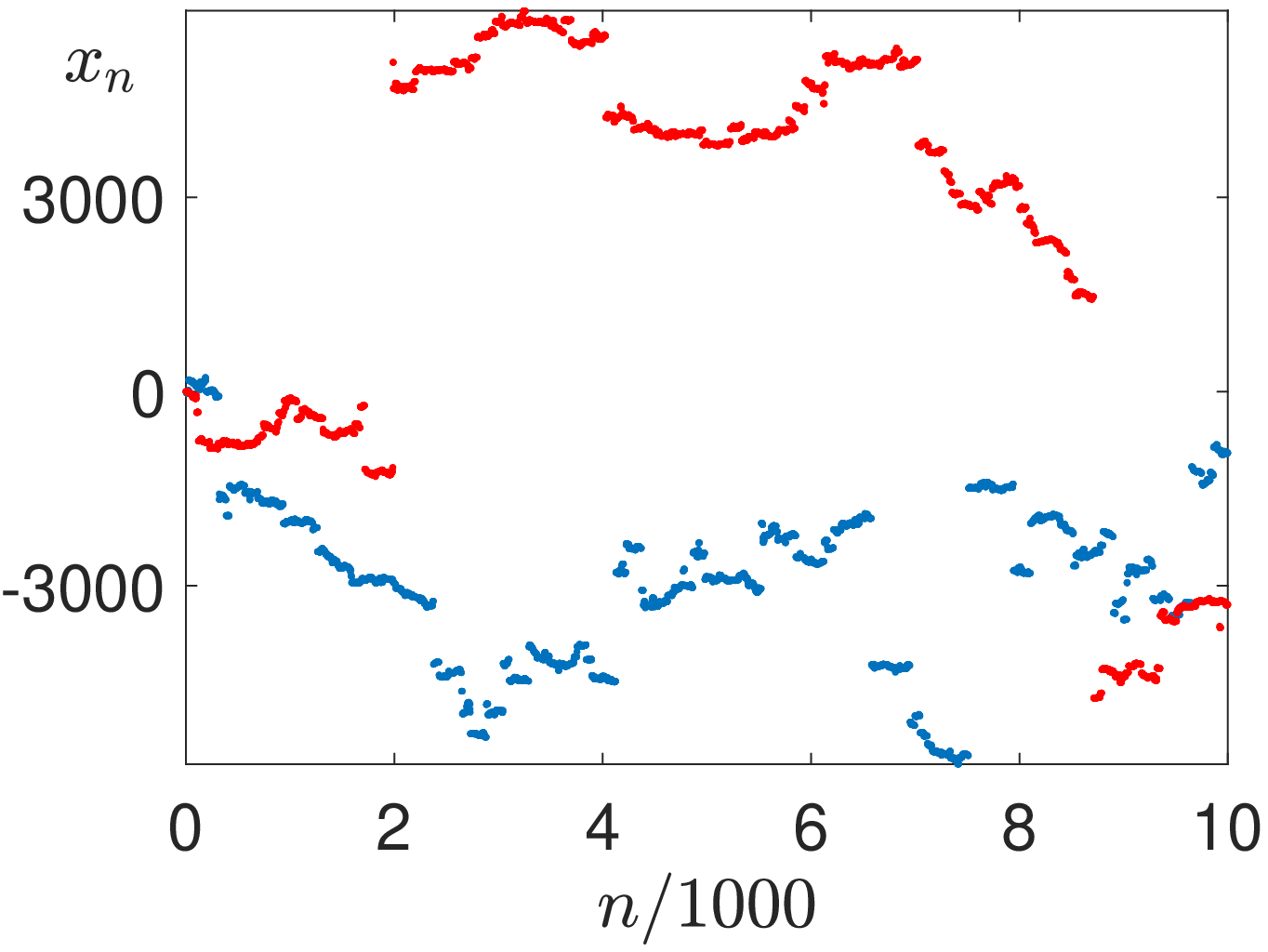,height=5.7cm}
}
\vskip -5.8cm
\leftline{(a) \hskip 6.8cm (b)}
\vskip 5.2cm
\caption{{\it Illustrative dynamics of shift-periodic map $F(x;\kappa)$ 
from Example~{\rm \ref{example2}}.
The first $10^4$ iterations $x_{n+1} = F(x_n; \kappa)$ are plotted}
(a) {\it for $\kappa=10$ and initial condition $x_0=0.4$;
} \hfill\break
(b) {\it  for $\kappa=1$ and initial condition $x_0=0.2$ (blue dots). Panel {\rm(b)} additionally includes a plot of a sample trajectory of map $F(x)=x+\tan(2\pi x)$, with $x_0=0.2$ (red dots).}
}
\label{figure2}
\end{figure}

\section{Discrete-time Random Walks}
\label{sec:rw}

While the iterative formula (\ref{itseq}) uniquely determines the next iterate $x_{n+1}$ from the knowledge of $x_n$, Figures~\ref{figure1}(b) and~\ref{figure2} suggest that the next value $\lfloor x_{n+1} \rfloor$ is determined from $\lfloor x_n \rfloor$ only with a certain probability. The goal of this section is to formalise this observation for certain shift-periodic maps by studying the connections between the dynamics of (\ref{itseq}) and random walks, defined below.

\begin{defn}
\label{def:rw}
Let $(Y_n)_{n \in \mathbb{N}}$ be a discrete-time stochastic process  and let ${Z_n=Y_n-Y_{n-1}}$ for $n \in \mathbb{N}$, where we assume $Y_0 =0$. We say $(Y_n)_{n \in \mathbb{N}}$ is a discrete-time random walk if $Z_n$, $n \in \mathbb{N}$, are independent and identically distributed.
\end{defn}
\noindent
 Now, as hinted on in Section \ref{sec:spm}, the sequence $\lfloor x_n \rfloor$, $n \in \mathbb{N}$ does not generally have independent jumps. This is also demonstrated by the map below, which will serve as motivation for later definitions.

\begin{example} \label{example:nonint}
{\rm
Consider the shift-periodic map $F:\mathbb{R} \rightarrow \mathbb{R}_\infty$  with $\kappa>0$ and $$F(x) = 
\begin{cases} 2x, \quad \quad \quad \quad \, \mbox{ if } x, \in [0,1/)]; \\
1 - 2x, \quad \quad \;\, \mbox{ if } x \in [1/4,1/2); \\ 
F(x;\kappa), \quad \quad \mbox{ if } x \in [1/2,1), \end{cases}$$
and $F(x) = F( \{ x \}) + \lfloor x \rfloor$ on $\mathbb{R}$.
Here $F(x;\kappa)$ is the map defined in Example \ref{example2}. This shift-periodic map has a local maximum with value $1/2$. Observe that $F([0,1/2])=[0,1/2]$. It can be seen that the set of $x \in [0,1]$ such that $\{F^n(x)\} \notin [0,1/2]$ for all $n \in \mathbb{N}$ has Lebesgue measure zero. But $\{F^n(x)\} \in [0,1/2]$ implies  $\lfloor F^n(x) \rfloor = \lfloor F^k(x) \rfloor$ for all $k \geq n$. Now recall that a measure $\mu$ on $[0,1]$ is absolutely continuous with respect to the Lebesgue measure if $\lambda(S)=0$ implies $\mu(S)=0$. So when initial distribution $X_0$ is absolutely continuous, sequence $Y_n=\lfloor F^n(X_0) \rfloor$ becomes eventually constant with probability one.}
\end{example}

\noindent
A key issue with Example \ref{example:nonint} is the existence of a non-integer extremal value at $x=1/4$, which allows for points to become trapped in interval $[0,1/2]$.
This motivates the next definition, introducing restriction on shift-periodic maps for which an appropriate distribution of the initial values guarantees that the behaviour of a sequence generated by such a shift-periodic map will be that of a random walk.

\begin{defn}\label{def:is}
Let $F$ be a shift-periodic map with $0=t_0<t_1< \dots < t_k =1$ such that $F$ is continuous and monotonic on $(t_{i-1},t_i)$. We then say that $F$ has integer spikes if $F$ additionally satisfies conditions {\rm (iii)} and {\rm (iv)} below:
\vskip 3mm
{\parindent -5mm 
\leftskip 12mm
{\rm (iii) } 
$|F(x)-F(y)|>|x-y|$ holds for all distinct $x,y \in (t_{i-1},t_{i})$ where $i=1,2, \dots,k$.
\parindent -5mm 
\vskip 2mm
{\rm (iv) } $\displaystyle \lim_{x \rightarrow t_i^+}F(x) \in \mathbb{Z}_\infty \; \; \mbox{ and } 
\lim_{x \rightarrow t_i^-}F(x) \in \mathbb{Z}_\infty$ \, for $i= 0,1, 2,\dots, k.$  
\par
\leftskip 0mm}

\end{defn}

\noindent
 Condition (iii) is a technical restriction, which is sometimes called expanding in the literature~\cite{Baldwin:1990:CCP}, although this terminology is usually reserved for stronger conditions~\cite{Boyarsky:2012:LCH}. It will be important in Lemma~\ref{lem:conj} later. Condition (iv) is motivated by the discussion of Example \ref{example:nonint} above, for which this condition does not hold. Note that Example \ref{example2} satisfies the definition of being a shift-periodic map with integer spikes, where $t_0=0 $, $t_1=1/4$, $t_2=3/4$ and $t_3=1$. On the other hand, Example \ref{example1} in general does not satisfy Definition \ref{def:is}, except when $\varepsilon, \delta$ are both integer multiples of $4$. 
Utilising Definition \ref{def:is}, the following theorem holds. 

\begin{theorem} \label{thm:RW}
Let $F:\mathbb{R}\rightarrow \mathbb{R}_\infty$ be a shift-periodic map with integer spikes and let $U$ be uniformly distributed on $[0,1]$. Then there exists a homeomorphism $h: [0,1] \rightarrow [0,1]$ so that for $Y_n = \lfloor F^n(h(U)) \rfloor$ and $Y_0=0$, the stochastic process $(Y_n)_{n \in \mathbb{N} }$ is an integer-valued discrete-time random walk and we have $\mathbb{P}(Y_n-Y_{n-1} = m)=p_m{\hskip 0.1mm}$, where $p_m$ is a constant satisfying $$p_m= \lambda( \{ x \in [0,1]: \lfloor F(h(x)) \rfloor =m\}) \quad \quad \mbox{ for any } m \in \mathbb{N}.$$
\end{theorem}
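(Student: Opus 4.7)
The plan is to reduce the theorem to an analysis of the ``circle map'' $\tilde F : [0,1] \to [0,1]$ defined by $\tilde F(x) = \{F(x)\}$ (well defined by~(i)), and then to exhibit $h$ as a conjugacy between $\tilde F$ and a suitable piecewise linear reference map. Shift-periodicity gives $\{F^n(x)\} = \tilde F^n(\{x\})$, and hence
$$
Y_n - Y_{n-1} \;=\; \lfloor F(\tilde F^{n-1}(h(U))) \rfloor \qquad \text{for } n \geq 1,
$$
so the task becomes that of choosing $h$ so as to make this sequence i.i.d.

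First I would use conditions~(ii) and~(iv) to decompose $[0,1]$ (up to a countable null set) into countably many open sub-intervals $\{J_\alpha\}_{\alpha \in A}$ with the \emph{full-branch} property: on each $J_\alpha$, the map $F$ is monotone with image exactly $(m(\alpha),\,m(\alpha)+1)$ for some integer $m(\alpha)$; hence $\tilde F|_{J_\alpha}$ is a monotone homeomorphism onto $(0,1)$ and $\lfloor F \rfloor \equiv m(\alpha)$ there. By~(ii), each monotone piece $(t_{i-1}, t_i)$ contributes at most countably many $J_\alpha$, one for every integer crossed by $F$ on that piece, while~(iv) guarantees that the endpoints of each $J_\alpha$ land on integers (or on $\pm\infty$), which is precisely what makes each branch full.

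Next I would introduce a reference Bernoulli map $\hat F : [0,1] \to [0,1]$. Fixing any positive weights $\{q_\alpha\}_{\alpha \in A}$ with $\sum_\alpha q_\alpha = 1$ (for concreteness, $q_\alpha = \lambda(J_\alpha)$ works), partition $[0,1]$ into intervals $B_\alpha$ of length $q_\alpha$ arranged in the same left-to-right order as the $J_\alpha$'s, and on each $B_\alpha$ let $\hat F$ be the affine bijection $B_\alpha \to [0,1]$ whose orientation matches that of $\tilde F|_{J_\alpha}$. Then $\hat F$ preserves Lebesgue measure, and a standard Markov argument shows that for $U \sim \mathrm{Unif}[0,1]$ the address sequence $\beta_n \in A$ defined by $\hat F^{n-1}(U) \in B_{\beta_n}$ is i.i.d.\ with $\mathbb{P}(\beta_n = \alpha) = q_\alpha$: conditionally on $\beta_n = \alpha$ the point $\hat F^{n-1}(U)$ is uniform on $B_\alpha$, so $\hat F^n(U)$ is uniform on $[0,1]$ independently of the past.

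The crux is then the conjugacy step, for which I would invoke Lemma~\ref{lem:conj}: using the expanding condition~(iii) and the shared full-branch symbolic structure, one obtains a homeomorphism $h : [0,1] \to [0,1]$ with $h(B_\alpha) = J_\alpha$ for every $\alpha \in A$ and $h \circ \hat F = \tilde F \circ h$. Expansivity ensures that each point of $[0,1]$ is uniquely coded by its sequence of branch labels, so ``send a point to the point with the same itinerary'' is a well-defined bijection; continuity of $h$ and $h^{-1}$ follows from the fact that nested cylinder intervals shrink to points under the expanding dynamics. With $h$ in hand the proof closes quickly: writing $W_n = \hat F^n(U)$, the conjugacy gives $\tilde F^{n-1}(h(U)) = h(W_{n-1}) \in J_{\beta_n}$, whence $Y_n - Y_{n-1} = \lfloor F(h(W_{n-1})) \rfloor = m(\beta_n)$ is a deterministic function of the i.i.d.\ sequence $(\beta_n)$, so it is itself i.i.d. Summing over branches with a given integer value yields
$$
p_m \;=\; \sum_{\alpha : m(\alpha) = m} q_\alpha \;=\; \lambda\!\left( \bigcup_{\alpha : m(\alpha) = m} B_\alpha \right) \;=\; \lambda\bigl(\{ x \in [0,1] : \lfloor F(h(x)) \rfloor = m \}\bigr),
$$
using $h(B_\alpha) = J_\alpha$ in the last step. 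The main obstacle is the conjugacy step: producing a genuine \emph{homeomorphism} rather than just a measurable isomorphism is where condition~(iii) is indispensable, through Lemma~\ref{lem:conj}.
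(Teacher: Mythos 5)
Your proposal is correct and follows essentially the same route as the paper: you decompose $[0,1]$ into full branches, pass to the piecewise-linear ``reference Bernoulli map'' (which is precisely the paper's linearisation $g$ from Lemma~\ref{lem:conj}, and whose i.i.d.\ address sequence is the content of the paper's Lemma~\ref{lem:linear2}), and then invoke Lemma~\ref{lem:conj} to produce the conjugacy $h$ with $h(B_\alpha)=J_\alpha$, after which $Y_n-Y_{n-1}=m(\beta_n)$ is a deterministic function of an i.i.d.\ sequence. The only cosmetic difference is that the paper packages the argument through the cocycle notation $\phi_F$ and the auxiliary shift-periodic map $G$ with $G_r=g$, while you work directly with $\tilde F$ and $\hat F$.
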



\noindent
 We take two different approaches to proving this statement. First, we consider shift-periodic maps $F$ for which the fractional parts on $[0,1]$ have an absolutely continuous invariant measure~$\mu$. Under certain conditions on the map $h$ defined by $h^{-1}(x)= \mu([0,x])$, Theorem~\ref{thm:RW} will be satisfied, see Proposition \ref{cor:acim}.
Then we describe a second way to construct a suitable homeomorphism $h:[0,1] \rightarrow [0,1]$, valid for any choice of shift-periodic map $F$ with integer spikes. Our construction will ensure 
$\lambda( \{ x \in [0,1]: \lfloor F(h(x)) \rfloor =m\})=\lambda( \{ x \in [0,1]: \lfloor F(x) \rfloor =m\})$ for any $m \in \mathbb{Z}$, so that the transition probability $p_m$ equals the length of the intervals on which $F$ has integer part $m$.
Caution must be taken with the interpretation of the later result, as here the measure of $h(U)$ might not be absolutely continuous with respect to the Lebesgue measure. In that case, typical trajectories  might not display the discussed random walk structure.

\vskip 1mm
To investigate random walk behaviour, it will be beneficial to split the map into its fractional and integer parts, considering it as a skew product of maps on $[0,1]$ and $\mathbb{Z}$. We will introduce this concept in the next subsection.

\subsection{Skew-products} \label{ss:skew}

\noindent
Let $F$ be a shift-periodic map.
We first define the "restricted map", $F_r:[0,1] \rightarrow [0,1]$, given by 
$$F_r(x) = \begin{cases} \, \,\{ F(x) \}, \quad \mbox{ if } F(x) \notin \{ -\infty,  \infty\};\\
\, \, 0, \quad \quad \quad \, \, \,\, \mbox{ if } F(x) \in \{ -\infty, \infty\}.
\end{cases}$$
 We now define a map $\phi:[0,1] \times  \mathbb{N} \rightarrow \mathbb{Z}$ by
\begin{equation} \label{equ:cocycle}
\phi_F(x,n) = 
\sum_{k=0}^{n-1} \lfloor F(F_r^{k}(x)) \rfloor,
\end{equation}
whenever $\lfloor F(F_r^{k}(x)) \rfloor \notin \{-\infty, \infty\}$ for any $k \in \{0,1, \dots, n-1\}$. Else we take $\phi_F(x,n)=0$. 
We call $\phi_F$ the cocycle associated with $F$.
Note that the conditions placed on shift-periodic map $F$, in particular (i), ensure that whenever $\{F(x), F^2(x), \dots, F^n(x)\}$ does not intersect with $\{\infty, -\infty\}$ then $F_r^n(x)=\{F^n(x)\}$ and 
\begin{align} \label{equ:sum}
F^n(x)=F^n_r(x)+ \phi_F(x,n) \quad \mbox{ for } \quad n \geq 1.
\end{align}
 So for sequence $x_n$ defined by iterative formula (\ref{itseq}) with starting value $x_0 \in [0,1]$, we have $\lfloor x_n \rfloor = \phi_F(x_0,n)$, provided $F^n(x_0)$ is never infinite. Condition (ii) of Definition~\ref{shpema} ensures that this description of the sequence is valid away from a set of Lebesgue measure~$0$. 
We can rephrase Theorem \ref{thm:RW} in the following way:
\vskip 5mm

\noindent
\textbf{Theorem 3.1.$^*$}
\textit{Let $F: \mathbb{R} \rightarrow \mathbb{R}_\infty$ be a shift-periodic map with integer spikes and let 
$U$ be uniformly distributed on $[0,1]$. 
Then there exists a homeomorphism ${h:[0,1] \rightarrow [0,1]}$ so that for $Y_n=\phi_F(h(U),n)$ and $Y_0=0$,  the stochastic process $(Y_n)_{n \in \mathbb{N} }$ is an integer-valued discrete-time random walk and we have $\mathbb{P}(Y_n-Y_{n-1} = m)=p_m{\hskip 0.1mm},$ where $p_m$ is a constant satisfying \begin{equation} 
\label{equ:transition}
p_m= \lambda(\{ x \in [0,1]: \phi_F(h(x),1) = m \})  \quad \quad \mbox{ for any } m \in \mathbb{N}.
\end{equation}}

\noindent
\textbf{Remark:} Cocycle $\phi_F(x,n)$ allows us to associate with $F$ a skew-product $F_s$ induced by cocycle~$\phi_F$. Let $F_s:[0,1] \times \mathbb{Z} \rightarrow [0,1] \times \mathbb{Z}$ and
$$ F_s(x,m)=(F_r(x),m+\lfloor F(x) \rfloor)=(F_r(x),m+\phi_F(x,1)).$$
Using equation~(\ref{equ:cocycle}), $F_s$ satisfies 
$F^n_s(x,m)=(F_r^n(x),m+\phi_F(x,n) )$ for $x \in [0,1]$. 
 Skew-products in general frequently appear as models in physics, for example, see \cite{Gottwald:2013:HPD} for a recent discussion of diffusion and L\'evy-type behaviour of different systems from the point of view of skew-products, including the Pomeau-Manneville map~\cite{Pomeau:1980:ITT} mentioned in Section \ref{sec:intro}. 
Skew-products are also used to investigate recurrence of random walks in \cite{Atkinson:1976:RCC, Schmidt:2006:RCC}. Later, in the proof of Lemma~\ref{lem:conj}, we will relate the trajectories of piecewise monotone maps to those of piecewise linear maps via conjugacy. Similarly, in \cite{Stadlbauer:2004:SSP} skew-products with piecewise monotone fibres are related to those with piecewise linear fibres via a semiconjugacy, though in \cite{Stadlbauer:2004:SSP} the map is only allowed to have a finite number of monotonic branches. For more general discussions of cocycles and skew-products see  \cite{Aaronson:1997:IIE}. 
\vskip 5mm

\subsection{Piecewise linear maps}

\noindent
Theorem \ref{thm:RW}$^*$ is easy to verify for maps which are linear in between integer 
function values. (In this case map $h$ can simply be taken to be the identity map.) What we mean by "linear between integer values" is made precise in Lemma \ref{lem:linear2}.

\begin{lemma}
\label{lem:linear2}
Let $F: \mathbb{R} \rightarrow \mathbb{R}_\infty$ be a shift-periodic map with integer spikes and let $U$ be uniformly distributed on $[0,1]$.  Suppose there exists a collection of open, pairwise disjoint intervals $\{ (a_i,b_i) : i \in I \}$, $I$ countable, so that $[0,1] \setminus \bigcup_{i \in I} (a_i,b_i)$ is countable, $F_r$  is linear on  $(a_i,b_i)$ and $F_r((a_i,b_i))=(0,1).$
Then Theorem {\rm \ref{thm:RW}} holds for $h(x)=x$.
\end{lemma}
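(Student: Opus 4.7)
My plan is to show that $F_r$ acts on $[0,1]$ as a full-branch Bernoulli shift on the alphabet $I$, so that the increments of $(Y_n)$ become a deterministic function of an iid symbol sequence. Concretely I would proceed in three steps: verify that Lebesgue measure is $F_r$-invariant, identify $\lfloor F \rfloor$ as constant on each branch $(a_i,b_i)$, and then prove the iid property of the symbol process by a cylinder-set induction, which is the only real technical step.

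For the first two steps I would use that on each $(a_i,b_i)$ the map $F_r$ is linear with image $(0,1)$, so its slope has absolute value $1/(b_i-a_i)$. Since the branches are pairwise disjoint and their complement in $[0,1]$ is countable, and hence Lebesgue-null, $\sum_{i \in I}(b_i - a_i) = 1$, and for every Borel $A \subseteq (0,1)$
$$\lambda(F_r^{-1}(A)) = \sum_{i \in I} \lambda\bigl((a_i,b_i) \cap F_r^{-1}(A)\bigr) = \sum_{i \in I}(b_i - a_i)\,\lambda(A) = \lambda(A).$$
Thus $F_r$ preserves Lebesgue measure and $F_r^{n-1}(U)$ is uniform on $[0,1]$ for every $n \geq 1$. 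Moreover $\lfloor F \rfloor = F - F_r$ is a continuous $\mathbb{Z}$-valued function on each $(a_i,b_i)$ (the singularities of $F$ are confined to the $t_j$ of Definition~\ref{shpema}(ii), hence outside these branches), so $\lfloor F \rfloor$ equals a constant $m_i \in \mathbb{Z}$ on $(a_i,b_i)$. Defining the symbol $\xi(x) = i$ whenever $x \in (a_i,b_i)$, I then have $\phi_F(\cdot,1) = m \circ \xi$ almost everywhere, with $\mathbb{P}(\xi(U) = i) = b_i - a_i$.

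The main step is to prove that the symbol process $(\xi \circ F_r^{n-1})_{n \geq 1}$ is iid under the uniform law. I would argue by induction on $n$ that the cylinder set
$$C_{i_0,\ldots,i_{n-1}} := \bigcap_{k=0}^{n-1} F_r^{-k}\bigl((a_{i_k},b_{i_k})\bigr)$$
has Lebesgue measure $\prod_{k=0}^{n-1}(b_{i_k} - a_{i_k})$. The base case is immediate, and the inductive step exploits that $F_r$ restricted to $(a_{i_0},b_{i_0})$ is a linear bijection onto $(0,1)$ of slope $\pm 1/(b_{i_0} - a_{i_0})$, giving $\lambda(C_{i_0,\ldots,i_{n-1}}) = (b_{i_0} - a_{i_0})\,\lambda(C_{i_1,\ldots,i_{n-1}})$ and closing the induction. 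This is the classical full-branch Markov argument that identifies $F_r$ with a Bernoulli shift. Since the increments $Z_n := Y_n - Y_{n-1} = m(\xi(F_r^{n-1}(U)))$ are coordinatewise functions of an iid sequence, they are themselves iid with $\mathbb{P}(Z_n = m) = \sum_{i:m_i = m}(b_i - a_i) = p_m$, which is exactly Theorem~\ref{thm:RW}$^*$ with $h = \mathrm{id}$.

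The only subtlety I expect is bookkeeping around the countable exceptional set where $F_r$ was redefined to $0$ and the possibly infinite cardinality of $I$: the exceptional set together with its full $F_r$-forward orbit is a countable union of Lebesgue-null sets, so still null and hence irrelevant for the statistics of the uniformly distributed $U$, while $\sigma$-additivity handles the sum over $I$ throughout. Beyond this I do not anticipate any essential difficulty, since full-branch linearity reduces everything to the Bernoulli-shift model on $\prod_{n \in \mathbb{N}} I$.
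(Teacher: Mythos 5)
Your proof is correct and takes essentially the same approach as the paper: you show that $\lfloor F \rfloor$ is constant on each full branch $(a_i,b_i)$ and then establish independence of the increments by an induction on cylinder-set measures, which is the same inductive step the paper carries out via the observation that $F_r(U)$ is conditionally uniform given $U \in (a_i,b_i)$. The explicit verification of Lebesgue-measure invariance is a small extra that the paper leaves implicit, but the argument is otherwise identical.
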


\begin{proof}[Proof of Lemma \ref{lem:linear2}]
Since $F$ has integer spikes, $F_r$ linear on $(a_i,b_i)$ with $F_r((a_i,b_i))=(0,1)$ implies that $\phi_F(x,1)=\lfloor F(x) \rfloor$ is constant on $(a_i,b_i)$ for each $i \in I$. For $m \in \mathbb{Z}$ we set 
${I_m=\{i \in I: \phi_F(x,1)=m \mbox{ on } (a_i,b_i)\}}$. Then 
$p_m=\lambda(\cup_{i \in I_m}(a_i,b_i))=\sum_{i \in I_m} (b_i-a_i)$.

Let $Z_1=\phi_F(U,1)$ and $Z_j=\phi_F(U,j)-\phi_F(U,j-1)=\phi_F(F_r^{j-1}(U),1)$ for $j \geq 2$. We now wish to show that for any $m_1, \dots, m_n \in \mathbb{Z}$ we have $\mathbb{P}(Z_1=m_1, \dots, Z_n=m_n)=p_1\dots p_n$. For $n=1$ the statement holds by definition of $p_m$. For $n>1$ note that $F_r$ is linear on $(a_i,b_i)$ with ${F_r((a_i,b_i))=(0,1)}$, so $F_r(U)$ is uniformly distributed on the unit interval conditional on $U \in (a_i,b_i)$. 
Then $
\mathbb{P}(Z_2=m_2, \dots, Z_n=m_n\,|\,U \in (a_i,b_i))=\mathbb{P}(Z_1=m_2, \dots, Z_{n-1}=m_n).
$

The statement then follows by induction. This gives us independence of random variables $Z_n$, $n \in \mathbb{N}$, and further $\mathbb{P}(Z_n=m)=p_m$. The lemma holds.
\end{proof}

\subsection{Absolutely continuous invariant measures}

As mentioned in our earlier discussion, it is sometimes possible to deduce the behaviour of trajectories of a shift-periodic map $F$ from a given absolutely continuous invariant measure $\mu$ of $F_r$. Since $\mu$ is absolutely continuous, $x \mapsto \mu([0,x])$ is continuous, and if this map is additionally strictly increasing, it has an inverse denoted by $h$. Crucial is now the integer spike condition: We may split the unit interval into subintervals $(a_i,b_i)$, $i \in I$ on which $F_r$ is monotonic with $F_r((a_i,b_i))=(0,1)$. If $h^{-1} \circ F_r \circ h$ is additionally linear on $h^{-1}((a_i,b_i))$, our earlier Lemma~\ref{lem:linear2} becomes applicable.

\begin{prop} \label{cor:acim}
Let $F: \mathbb{R} \rightarrow \mathbb{R}_\infty$ be a shift-periodic map with integer spikes and let $U$ be uniformly distributed on $[0,1]$. Let $(a_i,b_i)$, $i \in I$ be intervals such that $F_r$ is monotonic on $(a_i,b_i)$ with $F_r((a_i,b_i))=(0,1)$ and $[0,1] \setminus \cup_{i \in I} (a_i,b_i)$ is countable. Suppose that $\mu$ is an absolutely continuous invariant measure for $F_r$ and that the map 
$x \mapsto \mu([0,x])$ is strictly increasing on $[0,1]$. Let $h$ be the inverse of $x \mapsto \mu([0,x])$.  Suppose that $h^{-1}\circ F_r \circ h$ is linear on each $h^{-1}((a_i,b_i))$, $i \in I$. Then  $Y_n=\phi_F(h(U),n)$, where $ n \in \mathbb{N},$ gives a random walk with transition probabilities given by equation {\rm (\ref{equ:transition})}.
\end{prop}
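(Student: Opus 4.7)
The plan is to reduce Proposition~\ref{cor:acim} to Lemma~\ref{lem:linear2} by conjugating $F$ via $h$. Since $\mu$ is absolutely continuous and $x \mapsto \mu([0,x])$ is strictly increasing, this cumulative distribution function is a homeomorphism of $[0,1]$ onto itself, so its inverse $h$ is a homeomorphism with $h(0)=0$ and $h(1)=1$. I would introduce the conjugate shift-periodic map $\tilde{F}:\mathbb{R}\to\mathbb{R}_\infty$ defined on $[0,1]$ by
$$\tilde{F}(y) := h^{-1}(F_r(h(y))) + \lfloor F(h(y)) \rfloor,$$
and extended via $\tilde{F}(y) = \tilde{F}(\{y\}) + \lfloor y \rfloor$. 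By construction $\tilde{F}_r = h^{-1}\circ F_r \circ h$, which by hypothesis is linear on each $h^{-1}((a_i,b_i))$ and sends that interval onto $h^{-1}(F_r((a_i,b_i))) = h^{-1}((0,1)) = (0,1)$.

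The next step is to verify that $\tilde{F}$ is itself a shift-periodic map with integer spikes. For condition (ii), I would take the partition $0 = h^{-1}(t_0) < h^{-1}(t_1) < \dots < h^{-1}(t_k) = 1$, with continuity and monotonicity of $\tilde{F}$ on each resulting piece inherited from those of $F$ together with the homeomorphism property of $h$. For condition (iii), each linear piece of $\tilde{F}_r$ has slope of absolute value $1/\lambda(h^{-1}((a_i,b_i))) \geq 1$, strict except in the degenerate single-piece case. For condition (iv), integer one-sided limits of $F$ translate to integer one-sided limits of $\tilde{F}$: a limit $F(x) \to k \in \mathbb{Z}$ corresponds to $F_r \to 0$ or $1$ with $\lfloor F \rfloor$ jumping to compensate, and since $h^{-1}(0)=0$ and $h^{-1}(1)=1$ the compensating jumps survive the conjugation and give $\tilde{F} \to k$; infinite limits are clearly preserved.

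Once these checks are in place I would apply Lemma~\ref{lem:linear2} directly to $\tilde{F}$, using that the intervals $h^{-1}((a_i,b_i))$ cover $[0,1]$ up to a countable set (since the homeomorphism $h^{-1}$ preserves countability of complements). This yields that $\phi_{\tilde{F}}(U,n)$ is an integer-valued random walk with increment probabilities $\tilde{p}_m = \lambda(\{y \in [0,1]:\phi_{\tilde{F}}(y,1)=m\})$. The final step is a routine induction on $n$ using $\tilde{F}_r^n(y) = h^{-1}(F_r^n(h(y)))$ and $\lfloor \tilde{F}(y) \rfloor = \lfloor F(h(y)) \rfloor$, which gives $\phi_{\tilde{F}}(y,n) = \phi_F(h(y),n)$; this identifies $\phi_{\tilde{F}}(U,n)$ with the target process $Y_n$ and $\tilde{p}_m$ with $p_m$ of equation~(\ref{equ:transition}).

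The main obstacle I anticipate is the careful verification that $\tilde{F}$ genuinely inherits the integer spikes structure, especially condition (iv). The point is not conceptually hard but requires keeping track of the paired discontinuities of $F_r$ and $\lfloor F \rfloor$ at the integer-valued crossings of $F$ inside a monotone piece, and confirming that conjugation by $h$ — which fixes the boundary values $0$ and $1$ of the unit interval — leaves these compensations intact so that the one-sided limits of $\tilde{F}$ still lie in $\mathbb{Z}_\infty$. Once this bookkeeping is secured, the rest of the argument is a clean application of Lemma~\ref{lem:linear2} followed by the conjugation identity.
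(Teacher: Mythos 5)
Your proof is correct and takes essentially the same approach as the paper, which defines the same conjugate map $G(y)=(h^{-1}\circ F_r\circ h)(y)+\lfloor F(h(y))\rfloor$, notes $\phi_G(y,n)=\phi_F(h(y),n)$, and invokes Lemma~\ref{lem:linear2}; your extra care in verifying that $G$ inherits the integer-spikes structure fills in a step the paper leaves implicit. The ``degenerate single-piece case'' you flag in your check of condition~(iii) cannot actually occur: the expanding property of $F$ forces at least two intervals $(a_i,b_i)$, so each $h^{-1}((a_i,b_i))$ has length strictly less than one and the conjugated slopes are strictly greater than one.
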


\begin{proof}
Define a shift-periodic map $G: \mathbb{R} \rightarrow \mathbb{R}_\infty$ by $G(x) = (h^{-1} \circ F_r \circ h )(x)+ \lfloor F(h(x)) \rfloor$. 
Then $\lfloor G(x) \rfloor = \lfloor F(h(x)) \rfloor$ implies that
$\lfloor F(F_r^{n-1}(h(x))) \rfloor = \lfloor F(h(G_r^{n-1}(x))) \rfloor =
\lfloor G(G_r^{n-1}(x)) \rfloor$ and so  $\phi_F(h(x),n)=\phi_G(x,n)$. Applying Lemma~\ref{lem:linear2} to $G$, we note that $Y_n=\phi_G(U,n) = \phi_G(h(U),n)$ is a random walk with transition probabilities $p_m= {\lambda(\{ x \in [0,1]: \phi_G(x,1) =m\})}=\lambda(\{ x \in [0,1]: \phi_F(h(x),1) =m\})$.
\end{proof}

\noindent
In Figure~\ref{figure3}(a) we present an example of a map $F$ which satisfies the conditions of Proposition~\ref{cor:acim}. It is constructed by conjugating the piecewise linear map $G(x)=F(x;4,4)$, given in Example~\ref{example1}, with homeomorphism $h(x)=x(1+x)/2$. More precisely, we take \begin{equation} \label{example:conj}
F(x)=\big(h \circ G_r \circ h^{-1}\big)(x)+\lfloor G\big(h^{-1}(x)\big) \rfloor.
\end{equation}
Its invariant distribution, corresponding to an absolutely continuous invariant measure, is $h(U)$.
Although, starting from a piecewise linear map and a given $h$, we can use equation~(\ref{example:conj}) to construct many other examples satisfying Proposition~\ref{cor:acim}, the statement of Theorem~\ref{thm:RW} is more general, as we instead need to construct $h$.

\subsection{Topological conjugacy}

\noindent 
We will now relate the trajectories generated by an arbitrary shift-periodic map $F$ with integer spikes to those of a shift-periodic map $G$ which is linear between integer function values.  Below is a formal definition of topological conjugacy, a construction which has already appeared in  Proposition~\ref{cor:acim} and in  equation~(\ref{example:conj}).

\begin{figure}[t] 
\vskip 2.5mm
\leftline{
\hskip -6mm
\epsfig{file=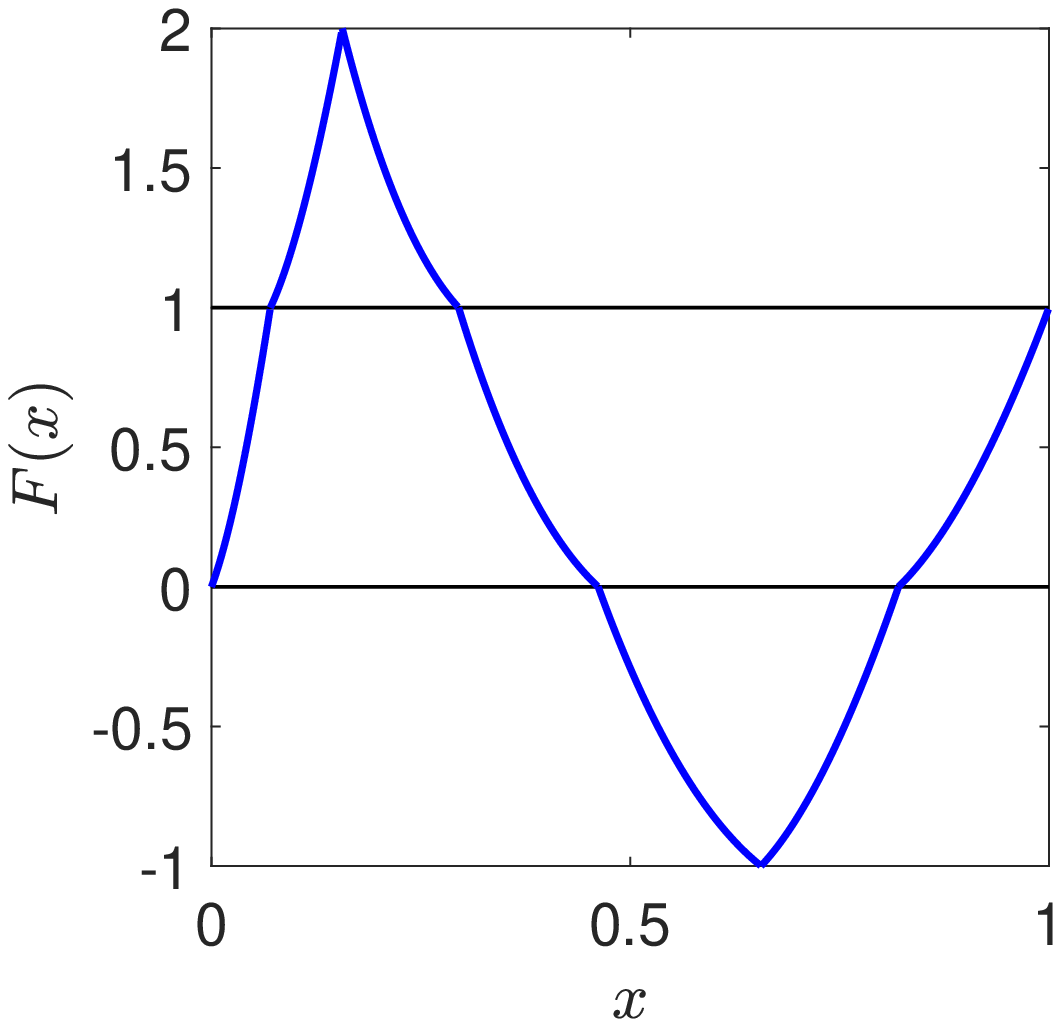,height=4.6cm}
\hskip -11mm
\epsfig{file=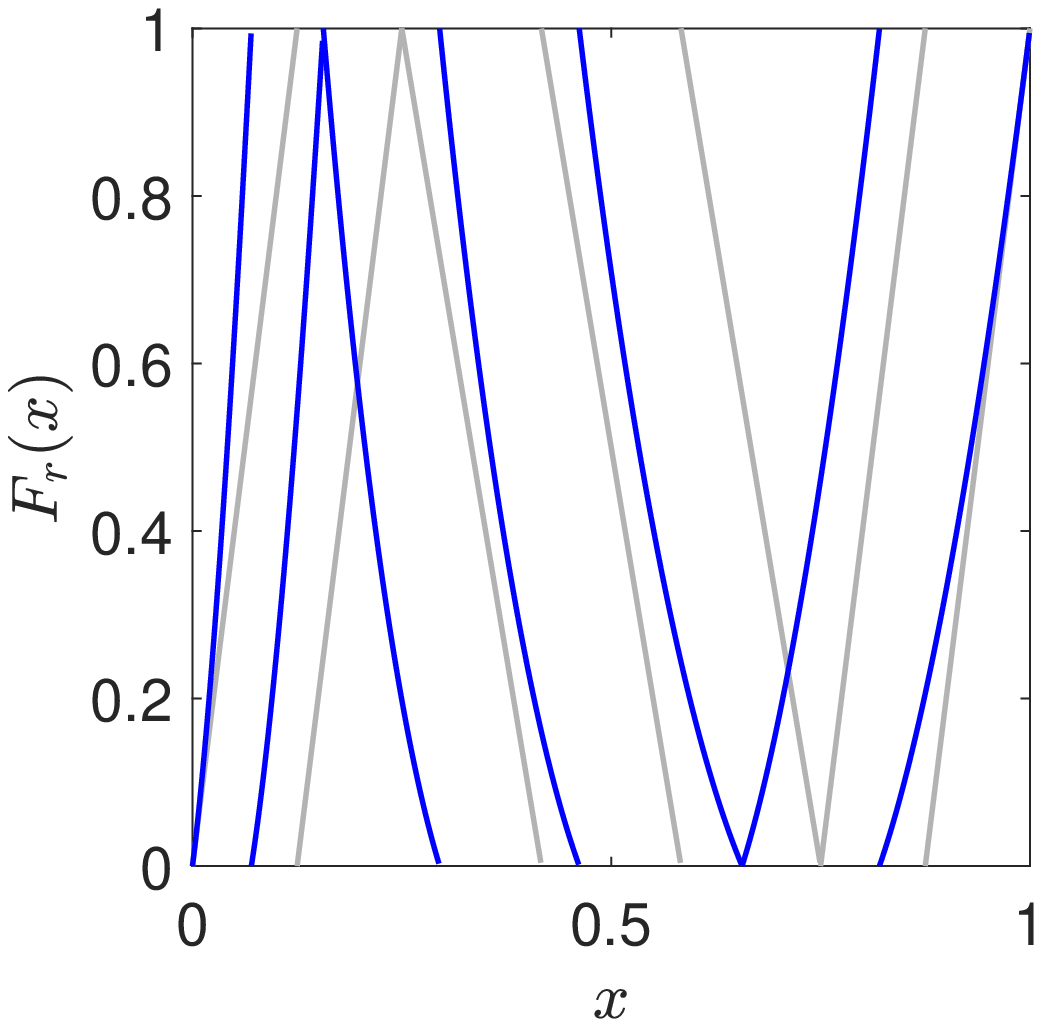,height=4.6cm}
\hskip -11mm
\epsfig{file=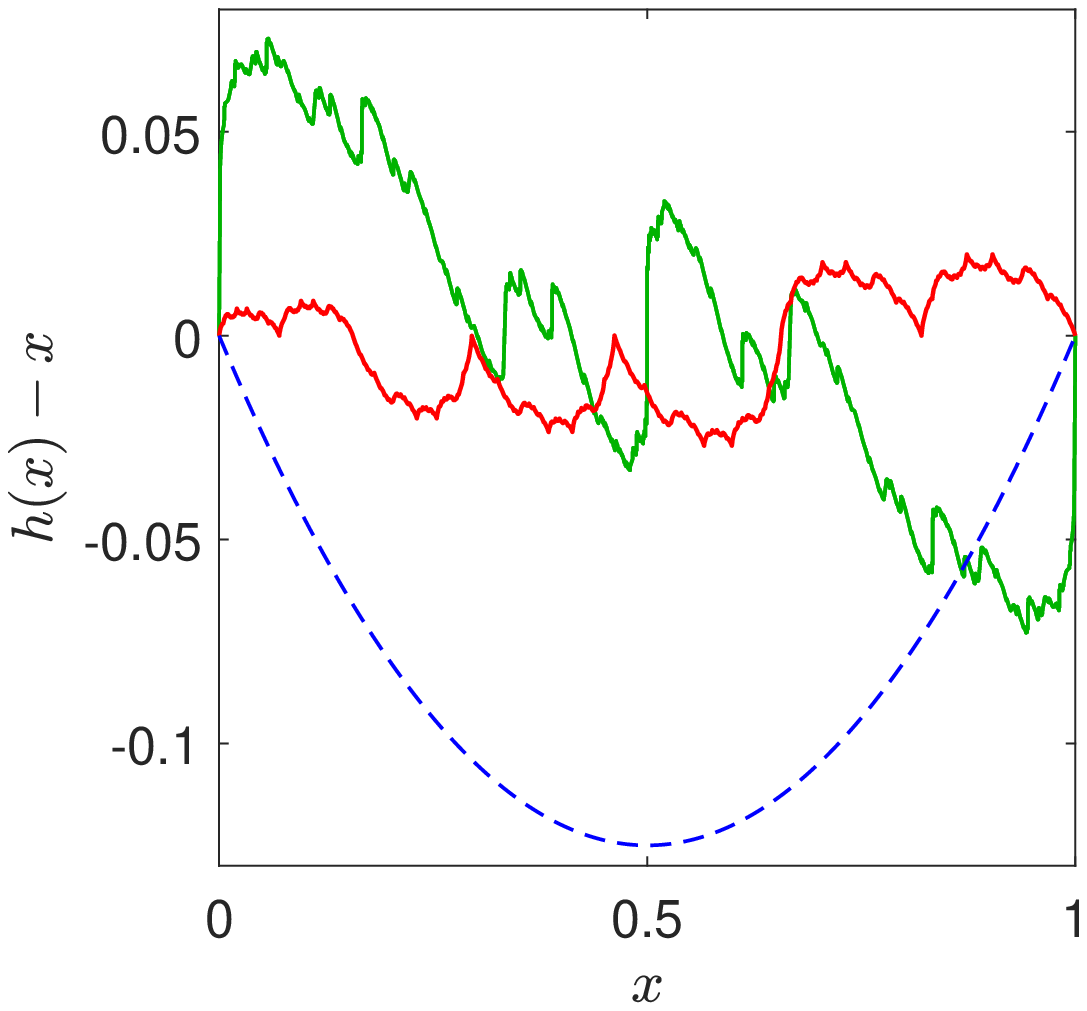,height=4.6cm}
}
\vskip -4.7cm
\leftline{(a) \hskip 4.6cm (b) \hskip 4.5cm (c)}
\vskip 4.15cm
\caption{ {\rm(a)} {\it Map $F(x)$ defined in equation~}{\rm (\ref{example:conj})} {\it for $h(x)=x(1+x)/2$.   \hfill \break {\rm (b)} {\it Corresponding fractional parts $F_r(x)$ (blue) and map $G_r(x)$ (grey) from equation} {\rm(\ref{example:conj}).} \hfill \break {\rm(c)}} 
{ {\it Function $h(x)-x$ for $h$ constructed as in Lemma~\ref{lem:conj} for the map in panel} {\rm (a),} {\it plotted as a red solid line. The blue dashed line shows $x(1+x)/2-x$ for comparison. The green solid line shows $h(x)-x$ for the Pomeau-Manneville map with  $a=6$, $b=2$ and $\varepsilon=0$. }
}}
\label{figure3}
\end{figure}%

\begin{defn} \label{def:conj}
Maps $f: [0,1] \rightarrow [0,1]$ and $g:[0,1] \rightarrow [0,1]$ are topologically conjugate if there exists a homeomorphism $h: [0,1]  \rightarrow [0,1]$ such that $g = h^{-1} \circ f \circ h$.
\end{defn}

\noindent
 Baldwin \cite{Baldwin:1990:CCP} describes all classes of topologically conjugate maps on $[0,1]$ which are continuous and piecewise monotonic. In \cite{Baldwin:1990:CCP, Stadlbauer:2004:SSP} it is assumed that piecewise monotonic maps have only finitely many monotonic branches. In \cite{Fotiades:2000:TCP} conjugates between maps with infinitely many monotonic branches are considered, though piecewise differentiability is assumed. A slight adaptation of Baldwin's proof establishes the Lemma below, which is valid for maps with infinitely monotonic branches and does not require differentiability.

\begin{lemma}

\label{lem:conj}
Let $f:[0,1] \rightarrow [0,1]$ and suppose there exist pairwise disjoint intervals $(a_i,b_i)$ with $i \in I$, where $I$ is a countable indexing set disjoint from $[0,1]$, such that the following conditions are satisfied:

{\parindent -5mm 
\leftskip 12mm
{\rm (i) } For any $i \in I$, $f$ is continuous and monotonic on $(a_i,b_i)$ with $f((a_i,b_i))=(0,1)$;

\parindent -6mm 
{\rm (ii) } Inequality $|f(x)-f(y)|>|x-y|$ holds for all distinct $x, y \in (a_i,b_i)$, where $i \in I$.

\parindent -7mm 
{\rm (iii) } Set $\Lambda = [0,1] \setminus \cup_{i \in I} (a_i,b_i)$ is countable and $f(\Lambda) =\{ 0\}$.
\par
\leftskip 0mm}

\noindent
Define a corresponding linearised map $g: [0,1] \rightarrow [0,1]$ in the following way: $g$ is linear on $(a_i,b_i)$ with $\lim_{x \rightarrow a_i^+} g(x) = \lim_{x \rightarrow a_i^+} f(x)$ and $\lim_{x \rightarrow b_i^-} g(x) = \lim_{x \rightarrow b_i^-} f(x)$ for any $i \in I$ and further $g(x)=0$ for any $x \in \Lambda$. Then $f$ and $g$ are topologically conjugate.
\end{lemma}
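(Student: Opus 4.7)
The plan is to adapt Baldwin's construction \cite{Baldwin:1990:CCP} to the countably-branched setting by first defining an order-preserving bijection $\phi$ between the dense countable sets
$P=\bigcup_{n\geq 0} f^{-n}(\Lambda)$ and $Q=\bigcup_{n\geq 0} g^{-n}(\Lambda)$,
and then extending it by monotone continuity to the desired homeomorphism $h$. Density of these sets is a consequence of the expansion hypothesis (ii): no branch $(a_i,b_i)$ can have length $1$, since otherwise $f(x)-x$ (when $f$ is increasing) or $f(x)+x$ (when $f$ is decreasing) would have to be strictly monotone between two equal endpoint limits. Consequently $b_i-a_i<1$ for every $i$, so the image of any open subinterval $(c,d)\subset(a_i,b_i)$ under $g$ (or $f$) expands strictly; once an iterate no longer fits inside a single branch, the orbit must cross a branch endpoint and thus land in $\Lambda$, placing a preimage of $\Lambda$ inside $(c,d)$.

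Next I would build $\phi\colon Q\to P$ by induction on the levels $Q_n=\bigcup_{k=0}^{n} g^{-k}(\Lambda)$. Since $\Lambda$ is common to both maps and $f(\Lambda)=g(\Lambda)=\{0\}$, set $\phi|_\Lambda=\mathrm{id}_\Lambda$ at level zero. Given $\phi$ on $Q_n$, extend it to each $x\in Q_{n+1}\setminus Q_n$ as follows: $x$ lies in a unique branch $(a_i,b_i)$ and $g(x)\in Q_n\cap(0,1)$, since $g((a_i,b_i))=(0,1)$; define $\phi(x)$ to be the unique preimage of $\phi(g(x))$ under the monotonic homeomorphism $f|_{(a_i,b_i)}\colon (a_i,b_i)\to(0,1)$. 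Because $g$ was built to inherit the monotonicity sense of $f$ on each branch, $\phi$ is order-preserving within each $(a_i,b_i)\cap Q$; and since $\phi$ maps each branch into itself, it is globally order-preserving. By construction $f\circ\phi=\phi\circ g$ on $Q$.

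Finally I would extend $\phi$ to
$$ h(x)=\sup\{\phi(y):y\in Q,\; y\leq x\}. $$
Density of both $Q$ and $P$, together with order preservation of $\phi$, forces $h$ to be a strictly increasing continuous bijection of $[0,1]$ fixing $0$ and $1$, hence a homeomorphism. The conjugacy identity extends from $Q$ to all of $[0,1]$: on $\Lambda$ both sides vanish, and on each $(a_i,b_i)\setminus Q$ it follows by continuity of $f$ and $g$ there, using that $h$ fixes the endpoints $a_i,b_i\in\Lambda$ and therefore maps $(a_i,b_i)$ into itself. The main obstacle I anticipate is the density statement: because the slopes $1/(b_i-a_i)$ need not be bounded away from $1$, unlike in Baldwin's finite-branch setting one must argue carefully that the forward orbit of any open interval cannot remain in a single branch indefinitely without its length eventually exceeding that branch.
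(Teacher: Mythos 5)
Your construction takes a genuinely different route from the paper's. The paper codes each point by the itinerary of its orbit through the branches, builds piecewise-linear maps $h_n$ sending $g$-cylinders to $f$-cylinders of matching itinerary, and passes to a uniform limit; you instead build an order isomorphism $\phi$ between the countable sets $Q=\bigcup_n g^{-n}(\Lambda)$ and $P=\bigcup_n f^{-n}(\Lambda)$ level by level, and then extend it by monotone continuity. The two are dual views of the same data, since the cylinders are exactly the complementary intervals of $Q_n$ (resp.\ $P_n$), but your version makes the semiconjugacy $f\circ\phi=\phi\circ g$ structural from the start, whereas the paper's makes the convergence of the approximants visible. The inductive definition of $\phi$, its order preservation (using that $\phi$ maps each branch into itself and is order preserving within each branch), the monotone extension to a homeomorphism $h$ fixing $0$ and $1$, and the propagation of the conjugacy identity from $Q$ to all of $[0,1]$ by continuity on each branch are all sound.

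The one genuine gap is the one you flag yourself: density of $P$ and $Q$, i.e.\ that for every nondegenerate open $J$ some iterate $f^n(J)$ meets $\Lambda$. (The paper relies on the same fact, asserting it as ``clear'' and then that the supremal cylinder length tends to $0$, without further argument.) Here is how to close it without any slope bound. Write $\ell_n=|f^n(J)|$; if no iterate meets $\Lambda$ then $\ell_n$ is strictly increasing and bounded by $1$, and the branch $B_n$ containing $f^n(J)$ satisfies $|B_n|\geq\ell_n\geq\ell_0>0$. Since the branches are pairwise disjoint and contained in $[0,1]$, at most $\lfloor 1/\ell_0\rfloor$ of them have length $\geq\ell_0$, so the $B_n$ range over a fixed finite set $\{B^{(1)},\dots,B^{(K)}\}$. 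Fix one, $B=(a,b)$, with $f$ increasing there (the decreasing case is symmetric), and define $\psi(x)=f(x)-(x-a)$ on $[a,b]$, using the one-sided limits $f(a^+)=0$, $f(b^-)=1$. By the expansion hypothesis $\psi$ is strictly increasing, with $\psi(a)=0$ and $\psi(b)=1-(b-a)>0$; hence $(c',d')\mapsto\psi(d')-\psi(c')$ is continuous and strictly positive on the compact set $\{a\leq c'\leq d'\leq b,\ d'-c'\geq\ell_0\}$, and so attains a positive minimum $\kappa_j$. Writing $f^n(J)=(c_n,d_n)$, one has $\ell_{n+1}-\ell_n=\psi(d_n)-\psi(c_n)\geq\kappa:=\min_j\kappa_j>0$, forcing $\ell_n\to\infty$, a contradiction. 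With this step supplied, your proof is complete.
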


\begin{proof}
With any $x \in [0,1]$ we associate a sequence $\textbf{a}_{f}(x) = (a_0(x), a_1(x), \dots )$ where $a_n(x)=i$ if $f^n(x) \in (a_i,b_i)$ and $a_n(x)=f^n(x)$ if $f^n(x) \in \Lambda$. Let $\mathbf{b}$ be a finite sequence of length $n$. For sequence $\mathbf{c}$, infinite or finite of length greater or equal $n$, we say $\mathbf{c}|_n=\mathbf{b}$ if the first $n$ entries of $\mathbf{b}$ and $\mathbf{c}$ coincide. 
For a finite sequence $\mathbf{b}$ of length $n$ with entries in $I \cup \Lambda$  then write $$J^f_{\mathbf{b}}=\{x \in [0,1]: \mathbf{a}_{f}(x)|_n=\mathbf{b}\}.$$

\noindent
We now make the following observations: Suppose $x \neq y$. Because of condition (ii) it is clear that for some $n \in \mathbb{N}$ the interval $(f^n(x),f^n(y))$ or $(f^n(y),f^n(x))$ intersects with $\Lambda$. We then easily deduce $\mathbf{a}_f(x) \neq \mathbf{a}_f(y)$.

Let again $\mathbf{b}$ denote a finite sequence. From above we then note that
$J^f_{\mathbf{b}}$ is empty or a singleton if at least one of the entries of $\mathbf{b}$ is in $\Lambda$. Otherwise $J^f_{\mathbf{b}}$ is an open interval. This can easily be shown by induction, and is a consequence of $f((a_i,b_i)) = (0,1)$ and of monotonicity of $f$ on $(a_i,b_i)$. Further, for another finite sequence $\mathbf{c}$, if $\mathbf{c}|_n = \mathbf{b}$ then $J^f_{\mathbf{c}} \subseteq J^f_{\mathbf{b}}$.
The same observations also apply to $g$, where we define $J^g_{\mathbf{b}}=\{x \in [0,1]: \mathbf{a}_{g}(x)|_n=\mathbf{b}\}$. The construction of $g$, in particular equality to $f$ on $\Lambda$, ensures that $J^g_{\mathbf{b}}$ is empty, a singleton or open interval if and only if $J^f_{\mathbf{b}}$ is empty, a singleton or an open interval, respectively.

The sets $J^f_{\mathbf{b}}$, where $\mathbf{b}$ are sequences of length $n$ in $I \cup \Lambda$, partition $[0,1]$. Now we may define a continuous, monotonically increasing map  $h_n:[0,1] \rightarrow [0,1]$ by 
\begin{equation} \label{equ:hn}
h_n(J^g_{\mathbf{b}})= J^f_{\mathbf{b}}  \quad 
\mbox{ and } \quad h_n \mbox{ linear on } J^f_{\mathbf{b}},
\end{equation}
where $\mathbf{b}$ runs over all finite sequences of length $n$ in $I \cup \Lambda$.
Note that the least upper bound on the lengths of intervals $J^f_{\mathbf{b}}$, where  $\mathbf{b}$ sequence with $n$ entries, goes to $0$ as $n$ goes to infinity. $h_n$ thus converges uniformly to a continuous, monotonically increasing map $h$. It is in fact strictly increasing: Note that for $x < y $ there must exist $z_1, z_2 \in [0,1]$ with $x < z_1 < z_2 < y$ so that $\mathbf{a}_g(z_1)$ and $\mathbf{a}_g(z_2)$ have some entries contained in $\Lambda$.
This implies that  $(h_n(z_1))$ and $(h_n(z_2))$ have constant tails and so, as $h_n(x) < h_n(z_1) < h_n(z_2) < h(z_2)$, taking the limit as $n \rightarrow \infty$ we obtain $h(x)\leq h(z_1) < h(z_2) \leq h(y)$.

Now consider $\mathbf{b}=\mathbf{a}_g(x)|_n$ for some $n \in \mathbb{N}$. Since $h$ maps $x$ to $\overline{J^f_{\mathbf{b}}}$ for each such $\mathbf{b}$, $h(x)$ must either have $\mathbf{a}_{g}(x)=\mathbf{a}_{f}(h(x))$ or $\mathbf{a}_{f}(h(x))$ intersects with $\Lambda$. Assume the latter holds. There is a point $y$ with $\mathbf{a}_g(y)=\mathbf{a}_f(h(x))$ and $h_n$ is eventually constant and equal $h$ at $y$. Then $\mathbf{a}_g(y)$ and $\mathbf{a}_f(h(y))$ agree up to arbitrary length and $\mathbf{a}_g(y) = \mathbf{a}_f(h(y))$. But $\mathbf{a}_f(h(x))= \mathbf{a}_g(y) = \mathbf{a}_f(h(y))$ contradicts $h$ being strictly increasing, unless $x=y$. 
So $\mathbf{a}_g(x) = \mathbf{a}_f(h(x))$

As $h$ is continuous and strictly increasing, it is also a homeomorphism on $[0,1]$ with $\mathbf{a}_g(x) = \mathbf{a}_f(h(x))$. Then also $\mathbf{a}_g(g(x))=\mathbf{a}_f(f(h(x)))$ and so $\mathbf{a}_g(g(x))=\mathbf{a}((h^{-1}\circ f \circ g)(x))$. Therefore  $(h^{-1} \circ f \circ h)(x) =  g(x)$ and thus $f$ and $g$ are topologically conjugate.
\end{proof}

\noindent
While the topological conjugacy described above sends $f$ to the linearisation $g$ which keeps the endpoints of monotonic branches fixed, the construction itself is valid for any map $g$ which has the same number and orientation of monotonic branches as $f$, or, for an $f$ with infinitely many branches, when $g$ has the same number of limit points of $[0,1] \setminus_{i \in I} (a_i,b_i)$, corresponding to singularities.

\subsection{Proof of Theorem \ref{thm:RW}} \label{ssec:rwproof}

The proof of Theorem \ref{thm:RW} now follows from the previous results: 
Let $U$ be uniformly distributed on $[0,1]$. Let $F:\mathbb{R} \rightarrow \mathbb{R}_\infty$ be a shift-periodic map with integer spikes.  The conditions placed on $F$, in particular integer spikes, ensure that $F_r$ satisfies the requirements of Lemma \ref{lem:conj}. Let $g$ be the corresponding linearised map, so that $F_r$ is topologically conjugate to $g$. Define a shift-periodic map $G:\mathbb{R} \rightarrow \mathbb{R}_\infty$ by setting $G(x)=\lfloor F(x) \rfloor +g(x)$ on $[0,1)$, so that $G_r(x)=g(x)$. Then ${G_r=h^{-1} \circ F_r \circ h}$ for some homeomorphism $h$. 

Next note that $\lfloor F(x) \rfloor = \lfloor G(x) \rfloor$ on $[0,1]$ and that $\lfloor F(x) \rfloor$ is constant on each of the intervals $(a_i,b_i)$ defined for $f=F_r$ in Lemma \ref{lem:conj}. Further, by construction, $h((a_i,b_i))=(a_i,b_i)$. These observations then show $$\lfloor F(F_r^{n-1}(h(x))) \rfloor = \lfloor G(F_r^{n-1}(h(x))) \rfloor =
\lfloor G(h(G_r^{n-1}(x)))  \rfloor =  \lfloor G(G_r^{n-1}(x))\rfloor.$$ So $\phi_F(h(x),n)=\phi_G(x,n)$. But $G$ satisfies the conditions of Lemma~\ref{lem:linear2}, so random variables $\phi_G(U,n), n \in \mathbb{N},$ form a random walk with transition probabilities  $$p_m=\lambda\{ x \in [0,1]: \phi_G(x,1)=m\}=\lambda\{ x \in [0,1]: \phi_F(x,1)=m\}.$$ Hence the same must be true for $Y_n=\phi_F(h(U),n), n \in \mathbb{N}$ and Theorem \ref{thm:RW} holds.
\vskip -4mm
\rightline{$\square$}

\begin{corollary} \label{cor:iv}
Let $U$ be distributed uniformly on $[0,1]$. Let $h$ be the map from  Lemma {\rm\ref{lem:conj}}. Then $h(U)$ is an invariant distribution with respect to $F_r$, meaning that for any $x \in \mathbb{R}$
$$\mathbb{P}(F_r(h(U))\leq x) = \mathbb{P}(h(U)\leq x).$$ 
\end{corollary}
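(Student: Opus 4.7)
The plan is to exploit the topological conjugacy established in Lemma~\ref{lem:conj}: the homeomorphism $h$ satisfies $F_r \circ h = h \circ g$, where $g$ is the piecewise linear map whose branches $(a_i,b_i)$ each map onto $(0,1)$. Consequently $F_r(h(U)) = h(g(U))$, and since $h$ is a bijection the claim reduces to showing that $g(U)$ and $U$ have the same distribution, i.e.\ that $g$ preserves Lebesgue measure on $[0,1]$.

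First I would set up the reduction formally. Writing $g = h^{-1} \circ F_r \circ h$ gives $F_r \circ h = h \circ g$, so for any $x \in [0,1]$,
\begin{equation*}
\mathbb{P}\bigl(F_r(h(U)) \leq x\bigr) = \mathbb{P}\bigl(h(g(U)) \leq x\bigr) = \mathbb{P}\bigl(g(U) \leq h^{-1}(x)\bigr),
\end{equation*}
using that $h$ is a strictly increasing homeomorphism. Comparing with $\mathbb{P}(h(U) \leq x) = \mathbb{P}(U \leq h^{-1}(x))$, it suffices to prove $g(U) \stackrel{d}{=} U$, i.e.\ that $\mathbb{P}(g(U) \leq t) = t$ for every $t \in [0,1]$.

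Next I would verify preservation of Lebesgue measure by $g$ directly. By construction in Lemma~\ref{lem:conj}, the set $\Lambda = [0,1] \setminus \bigcup_{i \in I}(a_i,b_i)$ is countable, hence Lebesgue-null, and on each $(a_i,b_i)$ the map $g$ is linear with $g((a_i,b_i))=(0,1)$. For any Borel $A \subseteq [0,1]$, linearity gives $\lambda\bigl(g|_{(a_i,b_i)}^{-1}(A)\bigr) = (b_i-a_i)\,\lambda(A)$, so
\begin{equation*}
\mathbb{P}(g(U) \in A) = \sum_{i \in I}(b_i - a_i)\,\lambda(A) = \lambda(A)\sum_{i \in I}(b_i - a_i) = \lambda(A),
\end{equation*}
since the intervals $(a_i,b_i)$ partition $[0,1]$ up to the null set $\Lambda$. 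Taking $A=[0,t]$ gives the desired distributional identity, and combining with the first step yields $\mathbb{P}(F_r(h(U)) \leq x) = \mathbb{P}(h(U) \leq x)$, completing the proof.

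I do not anticipate a serious obstacle: the entire argument is essentially a pushforward computation made possible by the conjugacy already constructed in Lemma~\ref{lem:conj}. The only point requiring mild care is making sure the countable exceptional set $\Lambda$ truly is negligible and that the sum $\sum_i (b_i - a_i) = 1$, both of which follow from hypothesis (iii) of Lemma~\ref{lem:conj} as applied to $F_r$.
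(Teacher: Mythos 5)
Your proof is correct and follows essentially the same route as the paper's: both use the conjugacy $F_r\circ h = h\circ G_r$ from Lemma~\ref{lem:conj}, reduce to showing that the uniform law is invariant under the linearised map $g=G_r$, and then chain the equalities via the strictly increasing homeomorphism $h$. The only difference is presentational: the paper simply asserts that $U$ is invariant for $G_r$ because $G_r$ is linear on each $(a_i,b_i)$ with image $(0,1)$, while you make this explicit with the pushforward computation $\lambda(g^{-1}(A)) = \sum_i(b_i-a_i)\lambda(A) = \lambda(A)$.
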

\vskip -7mm
\begin{proof}
Let $G$ be as in the proof of Theorem \ref{thm:RW}. Since $G_r$ is linear on $(a_i,b_i)$ for any $i \in I$ and ${h((a_i,b_i))=(0,1)}$, we see that $U$ is an invariant distribution with respect to $G_r$. So we deduce that 
$
{\mathbb{P}(F_r(h(U))\leq x)= \mathbb{P}(h(G_r(U))\leq x)
= \mathbb{P}(G_r(U) \leq h^{-1}(x))=\mathbb{P}(U \leq h^{-1}(x)) = \mathbb{P}(h(U) \leq x).
}$
\end{proof}

\noindent
As noted below the proof of Lemma~\ref{lem:conj}, the choice of $h$ in the proof of Theorem~\ref{thm:RW} is rather arbitrary in the sense that there are infinitely many piecewise linear maps which $F_r$ conjugates to, and for any of these maps, Theorem~\ref{thm:RW} is satisfied. The choice of $h$ in Lemma~\ref{lem:conj} has the nice property that $p_m=\lambda( \{ x \in [0,1]: \lfloor F(x) \rfloor =m\})$. However, $h(U)$ is not necessarily  absolutely continuous, and neither is this necessarily the case for any of the alternative conjugacies.

As an example, consider again $F(x)$ defined in equation (\ref{example:conj}) with $h(x)=x(1+x)/2$. We know that $h$ is a smooth map which conjugates $F_r$ to the piecewise linear map $G_r$. The map $h(x)-x=x(x-1)/2$ is plotted in Figure~\ref{figure3}(c) as a blue dashed line. However, the construction in Lemma~\ref{lem:conj} gives a much more complicated choice of $h$,  shown in Figure~\ref{figure3}(c) in red. For this choice of $h$, the measure corresponding to distribution $h(U)$ is no longer absolutely continuous with respect to the Lebesgue measure, and the results of Theorem~\ref{thm:RW} do not translate into observations of typical trajectories.

The red line in Figure~\ref{figure3}(c)  illustrates an example in which Lemma~\ref{lem:conj} fails to give us a more natural choice of $h$, which exists for this example (blue dashed line). For other shift-periodic maps, however, there is no choice of $h$ for which the measure corresponding to $h(U)$ is absolutely continuous. An example is the Pomeau-Manneville map introduced in Section~\ref{sec:intro}. Choosing its parameters so that $F(x)=x+6x^2$ on $[0,1/2)$, it satisfies the conditions of Theorem~\ref{thm:RW}, but has derivative equal to $1$ at $x=0$ and $x=1$. Whenever a trajectory comes close to one of these points, a long sequence of constant integer parts follows until jumps reoccur. The long term dynamics of a typical trajectory  do not display the discussed random walk structure.  The map $h$ constructed in Lemma~\ref{lem:conj} is shown in Figure~\ref{figure3} as a green line.

\subsection{Alpha-stable processes}

\label{sec:alpha}

The results in the previous subsection showed how iterates $Y_n=\lfloor F^n(X) \rfloor$ of  shift-periodic maps with integer spikes can be viewed as sums of  independent and identically distributed random variables $Y_n-Y_{n-1}$, for suitable choices of initial distribution $X$. A lot is known about the behaviour of continuous stochastic processes arising as the limit of such partial sum processes under suitable scaling, for a summary see for example \cite{Whitt:2002:SPL}. We will briefly give an overview of such results for independent random variables and the implications for our shift-periodic maps. 

First we make our notion of a scaling, passing from a discrete to a continuous process, precise. Take $Y_n=\lfloor F^n(h(U)) \rfloor$, choose translation-scaling and space-scaling constants $a_n$ and $b_n$, and set
\begin{align} \label{equ:stochasticpro}
V^{(n)}(t)=\dfrac{1}{b_n}\left(Y_{\lfloor nt \rfloor}-a_nt\right),\qquad \mbox{ where }  n \in \mathbb{N}.
\end{align}
 Since for integer spike maps as in Theorem \ref{thm:RW} the generated random variables $Y_{\lfloor nt \rfloor}$ behave like a random walk, we can apply Functional Central Limit Theorems (FCLTs) to investigate the behaviour of the limit of $V^{(n)}(t)$.
The classical example is Donsker's theorem, which treats the convergence of processes of the form $\big(\sum^{\lfloor nt \rfloor}_{k=1} Z_k -a_nt\big)/b_n$ to the Wiener process when the independent random variables $Z_k$ follow a normal distribution. This convergence is with respect to the Skorohod metric on the space of right-continuous functions with existing left limits~\cite{Billingsley:1999:CPM, Whitt:2002:SPL}. We refer to such a space of functions on $[0,\infty)$ as $\mathcal{D}([0,\infty), \mathbb{R})$. 
A key component of the proof of Donsker's Theorem is the standard Central Limit Theorem (CLT), but a generalized version of the CLT due to Kolmogorov and Gnedenko \cite{Gnedenko:1954:LDS} also applies to stable distributions with infinite variance and corresponds to a generalized FCLT for $\alpha$-strictly stable processes, called L\'evy motions. To describe such L\'evy motions, we first need to define a special class of $\alpha$-strictly stable processes.

\begin{defn} \label{def:alphastable}
For $\alpha \in (0,2]$ and $\beta \in [-1,1]$, with $\beta=0$ when $\alpha=1$, we define  $\alpha$-strictly stable distribution $S(\alpha,\beta)$ to be the distribution with  characteristic function  $$\phi(t;\alpha, \beta)=\exp
\left(-|t|^\alpha
\left(1-i\beta \, 
\mbox{{\rm sgn}}(t)
\tan \left(\dfrac{\pi \alpha}{2}\right)\right)\right).$$ 
\end{defn} 
\noindent With Definition \ref{def:alphastable} it can be shown that $\sum^n_{k=1}X_k \sim n^{1/\alpha}S(\alpha,\beta)$ where $X_k\sim S(\alpha,\beta)$ are independent~\cite{Uchaikin:1999:CST}. In fact, this property is usually used to characterise $\alpha$-strictly stable distributions.  Note that $S(2,\beta)$ is a normal distribution with mean $0$ for any value of $\beta$, while $S(1,0)$ is a Cauchy distribution.

\begin{defn}
A L\'evy motion is a stochastic process $V(t;\alpha,\beta), t \geq 0$ with $\alpha \in (0,2]$ and $\beta \in [-1,1]$, where $\beta=0$ when $\alpha=1$, satisfying the following properties: 

{\parindent -6.5mm 
\leftskip 11.5mm
{\rm (a)} Every sample path of $V(t;\alpha,\beta)$ is contained in $D([0,\infty), \mathbb{R})$. Also, $V(0)=0$.

{\rm (b)} The increments  $V(s_2;\alpha,\beta)-V(s_1;\alpha,\beta)$, $\dots$, $V(s_n;\alpha,\beta)-V(s_{n-1};\alpha,\beta)$ are independent for any $0 \leq s_1 < \dots < s_n$.

{\rm (c)} For $s \geq 0, t>0$ we have $V(s+t;\alpha,\beta)-V(s;\alpha,\beta)$ equal in distribution to $t^{1/\alpha}S(\alpha, \beta)$.
\par
\leftskip 0mm}
\noindent
\end{defn}

\noindent A standard example of such a L\'evy motion is the Wiener process for $\alpha=2$. L\'evy motions allow the following generalisation of Donsker's Theorem.

\begin{theorem}[FCLT for $\alpha$-stable L\'evy motions]\label{thm:FCLT}
Let $Y_1, Y_2, \dots$ be independent, identically distributed random variables with cumulative 
distribution $F_Y(x)$ satisfying
$$
 1-F_Y(M) \sim c_+ M^{-\kappa} \mbox{ as } x \rightarrow \infty
\quad
\mbox{ and }
\quad
F_Y(M) \sim c_- |M|^{-\kappa} \mbox{ as } x \rightarrow -\infty$$
where $\kappa > 0$, $c_+ \ge 0$ and $c_- \ge 0$ are constants such that $c_+$ and $c_-$ are not both zero and $c_+=c_-$ if $\kappa = 1.$
Let  
$$\alpha = \min\{\kappa,2\} \quad and \quad 
\beta = \dfrac{c_+ -c_-}{c_+ + c_-}
$$
and depending on $\kappa$, choose $a_n$ and $b_n$ from the table below.

\begin{center}
\begin{tabular}{|c|c|c|}
\hline $\kappa$ & $a_n$ & $b_n$ \\ \hline 
$0$ $<$ $\kappa$ $<$ $1$ & $0$ & 
$(\pi(c_++c_-)(2\Gamma(\alpha)\sin(\alpha\pi/2))^{-1}n)^{1/\alpha}$ \rule{0pt}{4mm}\\  
$\kappa = 1$ & $\beta(c_++c_-)n \log(n)$ & 
$\pi/2 (c_++c_-)n$ \\
$1$ $<$ $\kappa$ $<$ $2$  & $n \mathbb{E}[Y_i]$ & 
$(\pi(c_++c_-)(2\Gamma(\alpha)\sin(\alpha\pi/2))^{-1}n)^{1/\alpha}$ \\ 
$\kappa=2$ & $n \mathbb{E}[Y_i]$ & 
$(c_++c_-)^{1/2}(n \log (n) )^{1/2}$ \\  
$\kappa>2$ & $n \mathbb{E}[Y_i]$ & 
$(\mathrm{Var}(Y_i)/2)^{1/2}n^{1/2}$ \\  \hline
\end{tabular}
\end{center}
Then the stochastic processes defined by 
\begin{align*}
V^{(n)}(t)=\dfrac{1}{b_n} \left(\sum^{\lfloor nt \rfloor}_{k=1} Y_k -a_nt\right),
\end{align*}
converge, with respect to the Skorohod metric on $D([0, \infty),\mathbb{R})$, to the L\'evy motion $V(t;\alpha,\beta)$.
\end{theorem}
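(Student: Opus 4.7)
My plan is to treat the theorem as the functional form of the Gnedenko--Kolmogorov generalised central limit theorem and split the argument into two standard ingredients: convergence of finite-dimensional distributions (fdds), and tightness in the Skorohod $J_1$ topology on $\mathcal{D}([0, \infty), \mathbb{R})$. The tail hypotheses $1 - F_Y(M) \sim c_+ M^{-\kappa}$ and $F_Y(-M) \sim c_- M^{-\kappa}$ are precisely the regularly-varying tail conditions that place $Y_k$ in the domain of normal attraction of $S(\alpha, \beta)$; the constants $a_n, b_n$ in each row of the table are the canonical normalising constants for this domain, with $\kappa > 2$ recovering Donsker's classical theorem and the other rows corresponding to the successive failures of moments.

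For one-dimensional convergence at $t = 1$, I would compute
\begin{equation*}
\mathbb{E}\bigl[\exp(is V^{(n)}(1))\bigr] = \exp\bigl(-is a_n / b_n\bigr)\,\bigl(\mathbb{E}[\exp(is Y_1 / b_n)]\bigr)^n,
\end{equation*}
write $\mathbb{E}[\exp(is Y_1 / b_n)] = 1 - \psi_n(s)/n + o(1/n)$, and extract the leading behaviour of $\psi_n(s)$ from the tail integrals by Karamata-type arguments to obtain $\phi(s; \alpha, \beta)$. The five cases of the table genuinely split here: for $\kappa \in (0,1) \cup (1,2)$ the tails give a pure power law with asymmetry encoded by $c_+ - c_-$; at $\kappa = 1$ an Abel summation on the truncated means produces the extra $\log n$ factor in the centering; at $\kappa = 2$ the slowly varying correction $(n \log n)^{1/2}$ arises because $\int_0^M x \, \mathrm{d} F_Y(x)$ diverges only logarithmically; and for $\kappa > 2$ the standard finite-variance CLT applies. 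Convergence at a general $t > 0$ follows by replacing $n$ by $\lfloor nt \rfloor$ and using $b_{\lfloor nt \rfloor}/b_n \to t^{1/\alpha}$ (immediate for $\alpha \notin \{1, 2\}$ and a short check in the boundary cases). Full fdd convergence at $0 < t_1 < \dots < t_m$ then reduces to independence of the disjoint-index increments $\sum_{k = \lfloor nt_{j-1}\rfloor + 1}^{\lfloor nt_j \rfloor} Y_k$, each of which converges in distribution to the corresponding stable increment of $V(\cdot;\alpha,\beta)$.

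For tightness I plan to invoke the standard criterion for partial-sum processes attracted to a stable law, as treated in Whitt~\cite{Whitt:2002:SPL}. In the Brownian regime $\kappa > 2$ this is Donsker's original argument via a second-moment maximal inequality; in the heavy-tailed regime $\alpha < 2$ one needs a Skorohod-type maximal inequality of the form $\mathbb{P}(\max_{k \le n} |b_n^{-1}(\sum_{j \le k} Y_j - a_n k/n)| > M) \le CM^{-\alpha}$ combined with a modulus estimate adapted to $J_1$ that separates a few large jumps from the bulk.

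The hardest part will be this tightness step in the heavy-tailed regime, precisely because the limit process has jumps: uniform-topology tightness necessarily fails, so the standard Aldous criterion has to be replaced by a $J_1$ variant, and one must carefully isolate the contribution of the large summands from the bulk behaviour. Rather than reproducing this technical apparatus in full, my intention is to verify the hypotheses of the classical FCLT for attraction to stable laws as formulated in \cite{Whitt:2002:SPL} and appeal to that result to conclude.
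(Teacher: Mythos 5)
Your proposal is correct and takes essentially the same route as the paper, which disposes of the theorem in one line by citing Uchaikin's generalized CLT together with Whitt's discussion of FCLTs arising from CLTs; you merely spell out what those citations encapsulate (characteristic-function computation for fdds, $J_1$ tightness, and the case split on $\kappa$) before falling back on the same appeal to Whitt. One small slip in your intuition for the $\kappa = 2$ row: it is the truncated second moment $\int_0^M x^2\, \mathrm{d}F_Y(x)$, not $\int_0^M x\, \mathrm{d}F_Y(x)$, that diverges logarithmically and produces the $(n\log n)^{1/2}$ scaling, but this does not affect the argument since you ultimately cite the classical result.
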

\begin{proof}
This is a simple consequence of combining Uchaikin's version of generalized CLT~\cite{Uchaikin:1999:CST} with the discussion of FCLTs arising from CLTs in Whitt \cite{Whitt:2002:SPL}.
 \end{proof}
 
 \noindent
We can now rephrase this theorem in terms of our sequences of iterates of shift-periodic maps. The following statement is a direct corollary of Theorem \ref{thm:RW} and Theorem \ref{thm:FCLT}.

\begin{corollary} \label{cor:SP}
Let $F:\mathbb{R} \rightarrow \mathbb{R}_{\infty}$ be a shift-periodic map with integer spikes.
Suppose that 
$$
 \lambda \{ y \in [0,1]: \lfloor F(y) \rfloor > M\} \sim c_+ M^{-\kappa} 
\;
\mbox{ and }
\;\;
\lambda \{ y \in [0,1]: \lfloor F(y) \rfloor < -M\} \sim c_- M^{-\kappa} \; \mbox{ as } \; M \rightarrow \infty,$$
where $\kappa > 0$, $c_+ \ge 0$ and $c_- \ge 0$ are constant with $c_+$ and $c_-$ not both zero and $c_+=c_-$ if $\kappa = 1.$
Choose $\alpha$, $\beta$, $a_n$ and $b_n$ as in Theorem {\rm \ref{thm:FCLT}}. Let $h$ be as described in Theorem {\rm \ref{thm:RW}}, so that $h(U)$, with $U$ uniformly distributed on $[0,1]$, is an invariant distribution of $F_r$. Define 
$V^{(n)}(t)$ by {\rm(\ref{equ:stochasticpro})}.
Then these stochastic processes converge, with respect to the Skorohod metric on $D([0, \infty),\mathbb{R})$, to the L\'evy motion $V(t;\alpha,\beta)$.
\end{corollary}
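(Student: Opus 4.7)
The plan is to obtain this corollary as an essentially direct combination of Theorem~\ref{thm:RW} with Theorem~\ref{thm:FCLT}, once one checks that the power-law hypotheses on $F$ translate into the right tail hypotheses on the increments of the random walk. The key point I will use from the proof of Theorem~\ref{thm:RW} in Section~\ref{ssec:rwproof} is that the specific homeomorphism $h$ furnished by Lemma~\ref{lem:conj} satisfies $h((a_i,b_i))=(a_i,b_i)$ on each of the intervals of constant integer part of $F$, so that the resulting transition probabilities simplify to
$$
p_m \;=\; \lambda\bigl(\{x\in[0,1]:\lfloor F(h(x))\rfloor = m\}\bigr)
\;=\; \lambda\bigl(\{x\in[0,1]:\lfloor F(x)\rfloor = m\}\bigr).
$$

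First I would invoke Theorem~\ref{thm:RW} for the given $F$ and the resulting homeomorphism $h$. This gives that $Y_n=\phi_F(h(U),n)$ is an integer-valued discrete-time random walk, so that, writing $Z_n=Y_n-Y_{n-1}$, the random variables $Z_n$ are independent and identically distributed with $\mathbb{P}(Z_n=m)=p_m$ as above. Second, I would verify that the assumed tail behaviour of $\lfloor F(y)\rfloor$ is exactly the tail hypothesis of Theorem~\ref{thm:FCLT} for $Z_n$: summing $p_m$ over $m>M$ and using the identification above gives
$$
\mathbb{P}(Z_n > M) \;=\; \lambda\bigl(\{y\in[0,1]:\lfloor F(y)\rfloor > M\}\bigr) \;\sim\; c_+ M^{-\kappa},
$$
and analogously $\mathbb{P}(Z_n<-M)\sim c_- M^{-\kappa}$, with $c_+=c_-$ when $\kappa=1$ by the standing assumption.

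Third, I would apply Theorem~\ref{thm:FCLT} to the iid sequence $(Z_n)_{n\in\mathbb{N}}$ with the same $\alpha$, $\beta$, $a_n$, $b_n$ chosen via the table. Since $Y_0=0$ gives $Y_{\lfloor nt\rfloor}=\sum_{k=1}^{\lfloor nt\rfloor}Z_k$, the process in~(\ref{equ:stochasticpro}) is exactly the process whose convergence is asserted by Theorem~\ref{thm:FCLT}, and therefore $V^{(n)}(t)$ converges to the L\'evy motion $V(t;\alpha,\beta)$ with respect to the Skorohod metric on $\mathcal{D}([0,\infty),\mathbb{R})$.

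I do not anticipate any serious obstacle; the only non-routine point is the identification of $p_m$ with $\lambda(\{\lfloor F\rfloor=m\})$, with no $h$ appearing on the right. This is special to the homeomorphism produced by the linearisation of Lemma~\ref{lem:conj}: for a conjugacy coming from an absolutely continuous invariant measure as in Proposition~\ref{cor:acim}, the transition probabilities would instead be weighted by the invariant density, and the corresponding corollary would require the tail hypotheses in the form $\mu(\{\lfloor F\rfloor>M\})\sim c_+ M^{-\kappa}$ rather than the purely Lebesgue-based statement used here.
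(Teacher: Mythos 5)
Your proposal is correct and is essentially the same argument the paper intends: the paper simply states that Corollary~\ref{cor:SP} is a direct consequence of Theorem~\ref{thm:RW} and Theorem~\ref{thm:FCLT} and gives no further detail, and you supply exactly the missing bookkeeping (identifying the increments $Z_n=Y_n-Y_{n-1}$ as iid with $\mathbb{P}(Z_n=m)=p_m=\lambda\{\lfloor F\rfloor=m\}$, summing to match the tail hypothesis, and feeding the result into the FCLT). Your closing remark correctly flags the one subtle point, namely that the Lebesgue-measure form of $p_m$ is specific to the homeomorphism built in Lemma~\ref{lem:conj} and used in the proof of Theorem~\ref{thm:RW}, which is exactly what makes the hypothesis of the corollary match without reweighting by an invariant density.
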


\noindent
Note that while we required $c_+>0$ or $c_->0$ in Corollary~\ref{cor:SP}, effectively excluding continuous shift-periodic maps, the random variables $Y_n=\lfloor F^n(h(U)) \rfloor-\lfloor F^{n-1}(h(U)) \rfloor $ generated by a continuous shift-periodic map $F$ with integer spikes have finite second moments, so that Donsker's Theorem conveniently covers this case and we get Brownian motion upon an appropriate scaling in time and space.

Let us briefly return to map $F(x;\kappa)$ in Example \ref{example2}. Simple calculations show that the constant $\kappa$ in Corollary~\ref{cor:SP} is the same as parameter $\kappa$ of map $F(x;\kappa)$. The Corollary then tells us that for appropriately chosen $a_n$ and $b_n$ the stochastic process $V^{(n)}(t)$, as defined in equation (\ref{equ:stochasticpro}), behaves in a limit like a L\'evy  motion $Y(t;\alpha,\beta)$. Here $\alpha=\min\{\kappa,2\}$. In particular, we get a Wiener process for $\kappa \geq 2$. This can also be observed in Figure~\ref{figure2}.

 More general results concerning generalized CLTs and  convergence to L\'evy motions can be found in the literature, relaxing the independence condition to various mixing conditions. For recent results see, for example, \cite{Gouezel:2004:CLT, Tyran:2010:CLP,Tyran:2010:WCL,Gouezel:2010:ASI,Melbourne:2015:WCL}. It is possible to extend statements like in Corollary \ref{cor:SP} to sequences of iterates generated from more general shift-periodic maps, lacking the integer spike condition which ensures independence. However, conditions from papers \cite{Gouezel:2004:CLT, Tyran:2010:CLP,Tyran:2010:WCL,Gouezel:2010:ASI,Melbourne:2015:WCL} translate to technical restrictions on shift-periodic maps. 

In the next section, we  look at continuous stochastic processes from a different point of view. Instead of scaling a sequence of iterates by multiplication with a space-scaling constant, we scale the parameters $\varepsilon$ and $\delta$ of map $F(x;\varepsilon,\delta)$ from Example~\ref{example1}. The resulting process is still confined to $\mathbb{Z}$, but continuous in time, giving dynamical behaviour different from what we have seen in this subsection.

\section{Continuous-time random walks}
\label{sec:CRW}

In Section~\ref{sec:spm} our investigation of shift-periodic maps was motivated by the parameter-dependent map $F(x;\varepsilon,\delta)$ from Example \ref{example1}, see Figure~\ref{figure1}.  For most choices of parameters $\varepsilon$ and $\delta$, sequence $\lfloor x_n \rfloor$ cannot have two consecutive jumps, so does not strictly behave like a random walk according to our definition. However, it does display apparent similarities. We will describe in this section in what sense random walk behaviour appears in trajectories generated by this map. 
 
 Before we give a precise statement of Theorem~\ref{thm:arr}, we discuss the invariant density of $F_r(x;\varepsilon,\delta)$. This will be  necessary both for  the motivation of the theorem, and some later parts of the proof.

\subsection{Invariant density}

While it was relatively straightforward to find an invariant density of $F_r$ in the case of maps $F$ with integer spikes, by using topological conjugacy to a set of maps with a particularly nice invariant density (Lemma~\ref{lem:conj}), this approach cannot be used for more general shift-periodic maps.  We instead apply more general results on invariant densities for piecewise linear maps due to G\'{o}ra \cite{Gora:2008:IPL} to maps $F(x;\varepsilon,\delta)$ defined in Example \ref{example1}.

\begin{lemma} \label{lem:invden} Consider shift-periodic map $F(x;\varepsilon, \delta)$ defined in Example {\rm \ref{example1}}, where $\varepsilon>0$ and $\delta>0$. For parameters $A \in [0,1]$ and $B \in \mathbb{R} \setminus \{ 0 \}$ define the indicator function $\mathds{1}: [0,1] \rightarrow \{0,1\}$ by $$\mathds{1}(x;A,B) =\begin{cases} 
1 \quad \mbox{ if } \quad x \in [0,A],B>0  \; \mbox{ or }
\; x \in [A,1], B<0; \\
0 \quad \mbox{ otherwise. } 
\end{cases}$$
We define the cumulative derivative $\beta(x,n)$ iteratively by $${\beta(x,n)=\beta(x,n-1) \cdot F_r'(F_r^{n-1}(x;\varepsilon,\delta);\varepsilon,\delta)}, \; \;\; \mbox{ for } n\geq 2, \quad \mbox{ and } \quad \beta(x,1)= F'_r(x;\varepsilon, \delta),$$ where we leave function $\beta(x,n)$ undefined when the derivatives do not exist. This is the case only for finitely many points in $(0,1)$ for each  $n$. We further define two cumulative derivatives at $c_1=1/4$ and $c_2=3/4$ by 
$$\beta^L(c_i,n) = \lim_{x \rightarrow c_i^+} \beta(x,n) \quad
\mbox{ and }  \quad 
\beta^R(c_i,n) = \lim_{x \rightarrow c_i^-} \beta(x,n)
\quad \mbox{ for } i=1,2.
$$
The invariant density of $F(x;\varepsilon,\delta)$ is given by 
\begin{eqnarray} \label{equ:invd}
f_{i}(x;\varepsilon,\delta)&=&\dfrac{1}{K}\left(1+\sum_{j=1}^2 D^L_j \sum^\infty_{n=1} \dfrac{\mathds{1}(x;F_r^n(c_j),-\beta^L(c_j,n))}{|\beta^L(c_j,n)|} \right.
\nonumber \\&&\left. \quad \quad \quad +
\sum_{j=1}^2 D^R_j \sum^\infty_{n=1} \dfrac{\mathds{1}(x;F_r^n(c_j),\beta^R(c_j,n)
)}{|\beta^R(c_j,n)|}\right),
\end{eqnarray}
where $K$ is a normalisation constant, chosen so that $f_i$ integrates to $1$ over $[0,1]$ and $D_1^L$, $D_2^L$, $D_1^R$, $D_2^R$ are constants dependent on $\varepsilon$, $\delta$ with $D_i^R \rightarrow 1$ and $D_i^L \rightarrow 1$ as $\varepsilon, \delta \rightarrow 0$, for $i \in \{1, 2\}$.
\end{lemma}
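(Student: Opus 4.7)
The strategy is to apply G\'ora's explicit invariant-density formula \cite{Gora:2008:IPL} for piecewise linear expanding maps directly to the restricted map $F_r(x;\varepsilon,\delta)$. First I would check the hypotheses: the four linear pieces of $F$ on $[0,1]$ have slopes $4+\varepsilon$, $-(2+\varepsilon)$, $-(2+\delta)$, $4+\delta$, all of absolute value at least $2$, and passing to fractional parts introduces additional breakpoints at the preimages of integers without altering these slopes. Hence $F_r$ is a uniformly expanding, piecewise linear map on $[0,1]$ with finitely many monotonic branches, which is precisely the setting of G\'ora's theorem.

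\textbf{Reading off the formula.} The only two points in $[0,1]$ where the slope of $F$ changes sign are the local maximum $c_1 = 1/4$ and the local minimum $c_2 = 3/4$, with $F(c_1)=1+\varepsilon/4$ and $F(c_2)=-\delta/4$. G\'ora's formula then expresses the invariant density as a constant plus one-sided sums indexed by $n\geq 1$ of characteristic functions of intervals with one endpoint at $F_r^n(c_j)$ and the other at $0$ or $1$, weighted by the reciprocals of the one-sided cumulative derivatives $\beta^{L,R}(c_j,n)$. The sign of $\beta^{L,R}(c_j,n)$ dictates on which side of $F_r^n(c_j)$ the contribution is supported, which is exactly what the auxiliary indicator $\mathds{1}(x;A,B)$ encodes in the lemma. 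The constants $D_j^{L,R}$ are the weights of the one-sided contributions and are determined by matching the jumps of $F_r$ at $c_1$ and $c_2$ through a finite linear system with coefficients depending on $\varepsilon$ and $\delta$. The remaining breakpoints of $F_r$ (integer crossings of $F$ and the slope-jump at $x=1/2$) do not involve sign changes and so do not introduce new orbit sums; they only enter into the determination of the $D_j^{L,R}$.

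\textbf{Convergence and limit of coefficients.} Absolute convergence of the double sum in \eqref{equ:invd} is immediate from the uniform expansion bound $|\beta(\cdot,n)|\geq 2^n$. For the limit $D_j^{L,R}\to 1$ as $\varepsilon,\delta\to 0$, I would use that at $\varepsilon=\delta=0$ every linear branch has integer slope ($\pm 2$ or $\pm 4$) and maps onto a full integer interval, so Lebesgue measure is invariant. In this degenerate case $F(c_j)\in\mathbb{Z}$, hence $F_r(c_j)=0$ and every indicator in \eqref{equ:invd} is supported on a null set, leaving $f_i\equiv 1$ and forcing $D_j^{L,R}=1$. Continuity in $\varepsilon,\delta$ of the linear system determining these constants then yields the limit.

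\textbf{Main obstacle.} The main difficulty is bookkeeping: one must track, as $n$ varies, on which side of $F_r^n(c_j)$ the indicator is supported (equivalently, the sign of $\beta^{L,R}(c_j,n)$), keep careful account of how the integer-crossing breakpoints of $F_r$ interact with the orbits of $c_1$ and $c_2$, and solve the resulting linear system for $D_j^{L,R}$. Once the formula \eqref{equ:invd} is in hand, verification of invariance under the pushforward by $F_r$ reduces to a telescoping cancellation between consecutive orbit terms at each turning point, mirroring the standard derivation underlying G\'ora's theorem.
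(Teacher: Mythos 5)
Your overall route is the same as the paper's: apply G\'ora's explicit invariant-density formula to the piecewise linear, uniformly expanding map $F_r$, identify the turning points $c_1=1/4$, $c_2=3/4$ as the only points contributing one-sided orbit sums, and observe that the constants $D_j^{L,R}$ solve a finite linear system determined by the map. The paper's proof is a one-paragraph citation of G\'ora \cite{Gora:2008:IPL}, stating that $D=(D_1^L,D_1^R,D_2^L,D_2^R)$ solves $(-S^T+I)D^T=(1,1,1,1)^T$ where $S$ is a matrix whose entries, built from $F_r^n(c)$ and $\beta(c,n)$, tend to $0$ as $\varepsilon,\delta\to 0$; this directly forces $D\to(1,1,1,1)$. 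Your hypothesis-checking and your discussion of convergence (via $|\beta|\geq 2^n$) are sound and add useful detail that the paper leaves implicit.

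However, your argument for $D_j^{L,R}\to 1$ has a gap. You claim that at $\varepsilon=\delta=0$ one has $F_r(c_j)=0$ and hence ``every indicator in \eqref{equ:invd} is supported on a null set, leaving $f_i\equiv 1$ and forcing $D_j^{L,R}=1$.'' Two problems: (a) not all indicators are null at the degenerate parameters — when $A=F_r^n(c_j)=0$ and the relevant $B<0$, the indicator $\mathds{1}(x;0,B)$ is $1$ on all of $[0,1]$ (for example, $\beta^L(c_2,1)=4$ gives $-\beta^L(c_2,1)<0$, so the $n=1$ term for $c_2$ contributes a constant $1/4$ on the whole interval, not zero); (b) more fundamentally, even after accounting for this the density is uniform at $\varepsilon=\delta=0$, but uniformity of $f_i$ does not pin down the $D_j^{L,R}$ — any choice of $D$ gives a constant density when every orbit term is constant, so the formula alone cannot ``force'' $D=1$. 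The $D_j^{L,R}$ are determined exogenously by G\'ora's linear system, not by inspecting the density. The correct argument is the one you gesture at in your final sentence: the matrix $S$ in the linear system $(-S^T+I)D^T=(1,1,1,1)^T$ has entries going to $0$ as $\varepsilon,\delta\to 0$ (because $F_r^n(c_j)\to 0$ or $1$ and $|\beta|\geq 2^n$), so $D\to(1,1,1,1)$. You should lead with this and drop the ``forcing'' argument, which is circular and relies on a false statement about the indicators.
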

\FloatBarrier

\begin{proof}
This is just an application of G\'{o}ra's results on invariant densities of eventually expanding maps in \cite{Gora:2008:IPL}. 
Here $D=(D^L_1,D^R_1,D^R_1,D^R_2)$ is the solution of 
$(-S^T+I)D^T=(1,1,1,1)^T$ where $S=(S_{i,j})$ is a matrix with entries dependent on $F_r^n(c)$ and $\beta(c,n)$, $c \in \{c^L_1,c^R_1,c^L_2,c^R_2\}$, converging to $0$ as the parameter $\varepsilon$ and $\delta$ of $F_r^n(x;\varepsilon,\delta)$ converge to $0$. For more details on $S$ see page $7$ of~\cite{Gora:2008:IPL}. 
\end{proof}

\noindent
For small choices of parameters $\varepsilon$, $\delta$, say $\varepsilon$, $\delta < 10^{-6}$, numerical calculations tell us that $$f_i(x;\varepsilon, \varepsilon) \approx 1+1/4^n \quad \mbox{ for } x \in I_n$$  where $I_n$ are defined by \begin{eqnarray*}
I_1\!\!\!&=&\!\!\!(0,F(1/4;\varepsilon,\varepsilon))\cup (1- F(1/4;\varepsilon,\varepsilon),1) \\ I_n \!\!\!&=& \!\!\!(F^{n}(1/4; \varepsilon, \varepsilon), F^{n+1}(1/4;\varepsilon,\varepsilon)) \cup (1-F^{n+1}(1/4; \varepsilon, \varepsilon),1- F^{n}(1/4;\varepsilon,\varepsilon)) \mbox{ for } n\geq 2.
\end{eqnarray*} 
This approximation is accurate up to three decimal digits. In Figure \ref{figure1} we used larger values of parameters, $\varepsilon = \delta = 10^{-2}$. Table~\ref{table2} gives the invariant density $f_i(x;10^{-2},10^{-2})$ on intervals $I$ up to three decimal digits.

\begin{table}[t]
\label{table2}
\begin{center}
\begin{tabular}{|c|c||c|c||c|c|}
\hline $I$ & $f_i(x)$ & $I$ & $f_i(x)$ & $I$ & $f_i(x)$  \\ \hline
$(0,\varepsilon/4)$ & $1.959$ & $(0.206, 0.354)$ & $0.986$  &  $(0.794, 0.839)$ & $0.984$ \\ \hline 
$(0.0025,0.01)$ & $1.224$ &  \multirow{3}{*}{$(0.354,0.646)$}& \multirow{3}{*}{$0.988$} & $(0.839,0.96)$ & $0.995$ \\ \cline{1-2} \cline{5-6}
$(0.01,0.04)$ & $1.041$ &  & &  $(0.96,0.99)$ & $1.041$ \\ \cline{1-2} \cline{5-6}
 $(0.04,0.161)$ & $0.995$ &  & & $(0.99, 0.9975)$ & $1.224$ \\ \hline
  $(0.161,0.206)$ & $0.984$ &  $(0.646, 0.794)$ & $0.986$  &   $(1-\varepsilon/4,1)$ & $1.959$  \\ \hline
\end{tabular}\caption{{\it The values of $f_i(x;\varepsilon,\varepsilon)$ with $\varepsilon=10^{-2}$ up to three decimal digits.}}
\end{center}
\end{table}

\noindent
For $\varepsilon, \delta =0$ map $F(x;\varepsilon, \delta)$ is linear between grid lines, so of the type discussed in Section \ref{sec:rw}, and has invariant density equal $1$. So one might expect that $f_i(x; \varepsilon, \delta) \rightarrow 1$ as $\varepsilon, \delta \rightarrow 0$. This convergence is not uniform on $[0,1]$, but we can make the important observation below: 

\begin{corollary} \label{cor:uni1}
Let $f_i(x;\varepsilon, \delta)$ be the invariant density of $F(x;\varepsilon, \delta)$, described in Lemma~{\rm \ref{lem:invden}}. For any $d>0$ we have $f_i(x;\varepsilon, \delta) \rightarrow 1$ as $\varepsilon, \delta \rightarrow 0$ uniformly on $[d,1-d]$.
\end{corollary}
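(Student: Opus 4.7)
The approach is to apply the explicit invariant density formula from Lemma~\ref{lem:invden}, writing
$$
f_i(x;\varepsilon,\delta) = \frac{1+R(x;\varepsilon,\delta)}{K(\varepsilon,\delta)},
$$
where $R$ collects the four series appearing in~(\ref{equ:invd}) and $K=1+\int_0^1 R(y)\,dy$ is fixed by normalisation. The goal then reduces to two claims: (i) $R(x;\varepsilon,\delta)\to 0$ uniformly on $[d,1-d]$, and (ii) $K(\varepsilon,\delta)\to 1$. The key preliminary observation is that every branch of $F_r(\cdot;\varepsilon,\delta)$ has slope of modulus at least $2$, so $|\beta^{L/R}(c_j,n)|\geq 2^n$, dominating every series by the geometric $\sum_n 2^{-n}$.

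For (i), I would analyse the orbits of $c_1=1/4$ and $c_2=3/4$ under $F_r$. Without loss of generality I may take $d$ small (the general case follows from $[d,1-d]\subseteq[d',1-d']$ for $d'\le d$). At $c_1$ one has $F_r(c_1)=\varepsilon/4$, and while the orbit remains in $[0,1/(4+\varepsilon))$, $F_r$ acts by multiplication by $4+\varepsilon$. Hence $F_r^n(c_1)=(4+\varepsilon)^{n-1}\varepsilon/4$ for $n\leq N_1(\varepsilon,d):=\lfloor\log(4d/\varepsilon)/\log(4+\varepsilon)\rfloor+1$, and $N_1\to\infty$ as $\varepsilon\to 0$. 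During this range every encountered slope equals $4+\varepsilon>0$, so $\beta^L(c_1,n)=-(2+\varepsilon)(4+\varepsilon)^{n-1}<0$ and $\beta^R(c_1,n)=(4+\varepsilon)^n>0$. Reading off the definition of $\mathds{1}$, both relevant indicators $\mathds{1}(x;F_r^n(c_1),-\beta^L)$ and $\mathds{1}(x;F_r^n(c_1),\beta^R)$ then have support $[0,F_r^n(c_1)]\subseteq[0,d)$, disjoint from $[d,1-d]$. A symmetric argument at $c_2=3/4$ yields $1-F_r^n(c_2)=(4+\delta)^{n-1}\delta/4$ for $n\leq N_2(\delta,d)\to\infty$, with the four associated indicator supports contained in $(1-d,1]$.

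Setting $N_*=\min(N_1,N_2)$, the first $N_*$ terms in $R$ vanish on $[d,1-d]$, so
$$
|R(x;\varepsilon,\delta)| \;\leq\; 4\bigl(\max_{i,\sigma}|D^\sigma_i|\bigr)\sum_{n>N_*} 2^{-n} \;\leq\; 8\bigl(\max|D|\bigr)\cdot 2^{-N_*}
$$
for $x\in[d,1-d]$; since $D^{L/R}_i\to 1$ and $N_*\to\infty$, this tends to $0$ uniformly, establishing (i). For (ii), the same dichotomy applied to $\int_0^1 R$ works: each integral $\int_0^1 \mathds{1}/|\beta^{L/R}(c_1,n)|$ equals $F_r^n(c_1)/|\beta^{L/R}(c_1,n)|$, which simplifies to a constant multiple of $\varepsilon$ independent of $n$ (the powers of $4+\varepsilon$ cancel), giving a partial sum of order $\varepsilon N_*=O(\varepsilon\log(1/\varepsilon))$; the tail is $O(2^{-N_*})$; the analogous estimate holds for the $c_2$-series in $\delta$. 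Hence $\int_0^1 R\to 0$, so $K\to 1$, and combined with (i) this yields $f_i\to 1$ uniformly on $[d,1-d]$.

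The principal technical obstacle is the sign analysis of $\beta^{L/R}(c_j,n)$: one must verify that during the first $N_*$ iterations the orbit of $c_j$ stays in a region where $F_r'$ has constant sign, so that the indicator support falls near the correct endpoint ($\{0\}$ for $c_1$, $\{1\}$ for $c_2$) rather than covering most of $[0,1]$ with contribution $O(1)/|\beta|$ that happens to miss the decay I need. Once this sign bookkeeping is in place, the remaining estimates reduce to routine geometric tail bounds together with the cancellation that makes $\int \mathds{1}/|\beta|$ uniformly $O(\varepsilon)$ per term.
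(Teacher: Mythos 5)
Your proof is correct and follows essentially the same route as the paper: both arguments split the series in~(\ref{equ:invd}) into the first $N$ terms, which vanish on $[d,1-d]$ because the orbit of $c_1$ (resp.\ $c_2$) stays below $d$ (resp.\ above $1-d$) for $N\to\infty$ iterations as $\varepsilon,\delta\to 0$, plus a tail controlled by $|\beta^{L/R}|\ge 2^n$, and then send $N\to\infty$ while $K\to 1$. The only difference is presentational: you supply the explicit sign bookkeeping for $\beta^{L/R}(c_j,n)$ that pins the indicator supports to $[0,F_r^n(c_1)]$ and $[F_r^n(c_2),1]$, a detail the paper's proof uses implicitly without spelling out.
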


\begin{proof}
Let $d>0$. For any fixed $n \in \mathbb{N}$ we have that $F_r^n(c_1) \rightarrow 0$ and $F_r^n(c_2) \rightarrow 1$ as $\varepsilon, \delta \rightarrow 0$, so that the length of the interval on which $\mathds{1}(x;F_r^n(c_j),-\beta^L(c_j,n)) \neq 0$ or $\mathds{1}(x;F_r^n(c_j),\beta^R(c_j,n)) \neq 0$ also goes to zero, for $j =1,2$. Noting that $|\beta^L(c_j,n)|\geq 2^n$ and $|\beta^R(c_j,n)|\geq 2^n$, then the integral of the weighted sum of four infinite sums in equation (\ref{equ:invd}) over $[0,1]$ goes to $0$ as $\varepsilon, \delta \rightarrow 0$. Adding $1$ to this  integral, we get $K$. So we have $K \rightarrow 1$. By the same argument, for sufficiently small $\varepsilon>0$ and $\delta>0$ we have $F_r^n(c_1)<d$ and $F^n_r(c_2)>1-d$ for all $n \in \{1, 2, \dots, N\}$, so that we achieve bound 
$$ \dfrac{1}{K} \leq f_i(x;\varepsilon,\delta) \leq \dfrac{1}{K}\left( 1 + \dfrac{D_1^L+D_2^L+D_1^R+D_2^R}{2^{N}} \right)$$
valid on $[d,1-d]$ for small $\varepsilon$ and $\delta$. But $N$ was chosen arbitrarily. Recall from Lemma \ref{lem:invden} that also  $D_i^R, D_i^L \rightarrow 1$ for $i =1,2$. Combining these results,  we find that  $f_i(x;\varepsilon, \delta) \rightarrow 1$ uniformly on $[d, 1-d]$.
\end{proof}

\noindent
We briefly also note that this invariant density additionally gives us a description of how the fractional parts $F_r^n(x_0)$ of sequence $x_n$ behave long-term, by a simple application of Birkhoff's Ergodic Theorem.

\subsection{From maps to continuous-time random walks}

Now consider $X$ distributed according to the invariant distribution of $F_r(x;\varepsilon,\delta)$ on $[0,1]$ and $X_n = \lfloor F^n(X;\varepsilon,\delta) \rfloor$. For now we say a jump occurs when $X_n \neq X_{n+1}$.
By Corollary \ref{cor:uni1} the invariant density of $F_r(x;\varepsilon,\delta)$ is close to $1$ at the spikes $1/4$ and $3/4$ of the map for small parameters $\varepsilon$ and $\delta$. Further, direct calculation shows that the length of the subset of $[0,1]$ mapped outside the unit interval by $F(x;\varepsilon,\delta)$ is 
\begin{align}\label{equ:spikelength}
\ell(\varepsilon) + \ell(\delta) \quad \mbox{ where } \quad \ell(x) = \dfrac{x(3+x)}{2(x+2)(x+4)}.
\end{align}
This calculation suggests that the probability of a jump is about $3(\varepsilon+\delta)/16$. However, successive jump probabilities are not independent, since for small parameters successive jumps are impossible. We fix this issue by introducing a scaling in time, together with a scaling of the parameters. This scaling gives us behaviour resembling a continuous-time random walk, defined below.

\begin{defn}
Consider a continuous-time stochastic process $Y(t), t\geq 0$ with $Y(0)=0$ which takes values in $\mathbb{Z}$ and is right-continuous. Let $T_0=0$. For $j \geq 1$ define the time of the $j$-th jump by $$T_j = \min\{ t \in (T_{j-1}, \infty): Y(t) \neq Y(T_{j-1})\}.$$
Suppose $T_1, T_2, \dots$ are independent. Then we say $Y(t)$ is a continuous-time random walk.
\end{defn}

\begin{theorem}\label{thm:arr}
Let $\delta, \varepsilon >0$. For $m \in \mathbb{N}$ let $X_m$ be distributed according to the invariant distribution, $f_i(x;\varepsilon/m;\delta/m)$, with respect to $F(x;\varepsilon/m, \delta/m)$. Define 
$$ Y_m(t) = \left\lfloor F^{\lfloor mt \rfloor}(X_m;\varepsilon/m,\delta/m) \right\rfloor.$$
Let $T_{m,1}, T_{m,2}, \dots$ denote the jump times of $Y_m(t)$, that is, $$T_{m,j} = \min \{t \in (T_{m,j-1},\infty): Y_m(t) \neq Y_m(T_{m,j-1}) \}$$ where $T_{m,0} = 0$. Then for any $k\geq 0$ we have 
$$\mathbb{P}\left(T_{m,k+1} \leq \dfrac{\lfloor mt_k \rfloor}{m} + \tau \; \bigg| \; T_{m,k} = \dfrac{\lfloor mt_k \rfloor}{m}, \dots, T_{m,1}=\dfrac{\lfloor mt_1 \rfloor}{m} \right) \rightarrow 1-\exp(-\gamma\tau) $$ as $m \rightarrow \infty$, where
\begin{equation} \label{equ:gamma}
\gamma=\dfrac{3(\delta +\varepsilon)}{16}.
\end{equation}
\end{theorem}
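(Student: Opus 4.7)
The strategy is to show that on the rescaled time scale $t = n/m$, jump events of $Y_m$ become, in the limit $m\to \infty$, a Poisson process of rate $\gamma$, so that inter-jump times are asymptotically exponential with the same rate. I would proceed in three steps, with a final spectral/mixing ingredient as the main technical obstacle.

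\textit{Step 1: per-step jump probability from the invariant density.} Let $A_m \subset [0,1]$ denote the set of $x$ with $F(x;\varepsilon/m,\delta/m) \notin [0,1)$. A direct computation on the four linear branches of $F$, exactly analogous to the derivation of \eqref{equ:spikelength}, gives $\lambda(A_m) = \ell(\varepsilon/m) + \ell(\delta/m)$, and since $\ell(x) = 3x/16 + O(x^2)$ as $x\to 0$, one has $m\lambda(A_m) \to \gamma$. Because $A_m$ lies in an $O(1/m)$ neighbourhood of $\{1/4,3/4\}$, for any fixed $d \in (0,1/4)$ it is contained in $[d,1-d]$ for all large $m$, and Corollary~\ref{cor:uni1} yields $\sup_{x \in A_m}|f_i(x;\varepsilon/m,\delta/m) - 1| \to 0$. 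Hence the probability of a jump in a single step from the invariant distribution satisfies
\begin{equation*}
p_m := \int_{A_m} f_i(x;\varepsilon/m,\delta/m)\, dx = \frac{\gamma}{m}\big(1+o(1)\big).
\end{equation*}

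\textit{Step 2: exponential waiting time from the invariant density.} Let $\mathcal{L}_m$ denote the Perron--Frobenius operator of $F_r(\cdot;\varepsilon/m,\delta/m)$ and define the sub-Markov operator $\widetilde{\mathcal{L}}_m g := \mathcal{L}_m(\mathds{1}_{A_m^c} g)$, where $A_m^c := [0,1]\setminus A_m$. Duality with the Koopman operator gives
\begin{equation*}
\mathbb{P}\big(\text{no jump in } n \text{ steps} \,\big|\, X \sim g\big) = \int_0^1 \widetilde{\mathcal{L}}_m^n g\, d\lambda .
\end{equation*}
The map $F_r$ is piecewise linear with slopes of absolute value $\geq 2$, so $\widetilde{\mathcal{L}}_m$ is quasi-compact (Lasota--Yorke) with simple leading eigenvalue $\lambda_m = 1 - p_m + o(p_m)$ and positive eigenfunction close to $f_i$ in $L^1$. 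Taking $g = f_i$ and $n = \lfloor m\tau\rfloor$, and using $m p_m \to \gamma$, gives $\int \widetilde{\mathcal{L}}_m^n f_i\, d\lambda = \lambda_m^n(1+o(1)) \to e^{-\gamma\tau}$.

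\textit{Step 3: removing the conditioning.} Conditional on $T_{m,1},\ldots,T_{m,k}$, the sequence of jump directions in $\{-1,+1\}$ is determined by the history, and the fractional part $Z := \{F^{\lfloor m t_k\rfloor}(X_m)\}$ is supported in $[0,\varepsilon/(4m)]$ after an upward jump or $[1 - \delta/(4m),1)$ after a downward one. Since $F_r$ on $[0,1/4)$ is multiplication by $4+\varepsilon/m$, the relation $F_r^j(Z) \in [0,(4+\varepsilon/m)^j \varepsilon/(4m)]$ shows that $F_r^j(Z) \in [0,1/4)$, and in particular outside $A_m$, for all $j \leq K_m := \lceil\log_4(m/\varepsilon)\rceil$; the symmetric statement holds for the downward case using the contraction $1 - F_r^j(Z) = (4+\delta/m)^j(1-Z)$. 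Thus no jump is possible during these $K_m = O(\log m)$ deterministic steps, which are negligible on the $O(m)$ step scale since $K_m/m \to 0$. After $K_m$ steps, a standard Lasota--Yorke argument gives that the density of $F_r^{K_m}(Z)$ is within $o(1)$ of $f_i$ in total variation, uniformly in the conditioning; applying Step~2 to the remaining $\lfloor m\tau\rfloor - K_m$ steps then yields $1 - e^{-\gamma\tau}$ as the limiting conditional probability.

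The hardest part will be the two spectral/mixing inputs used in Steps~2 and~3: first, establishing the first-order expansion $\lambda_m = 1 - p_m + o(p_m)$ of the leading eigenvalue of the perturbed operator $\widetilde{\mathcal{L}}_m$; second, proving uniform $L^1$-convergence $\mathcal{L}_m^{K_m} g \to f_i$ from highly concentrated initial densities $g$, with the uniformity holding as the parameters $\varepsilon/m,\delta/m \to 0$. Both follow from the transfer-operator theory of piecewise expanding maps together with the explicit invariant density formula~\eqref{equ:invd} due to G\'ora~\cite{Gora:2008:IPL}, but the perturbation analysis near the vanishing-parameter regime requires careful bookkeeping.
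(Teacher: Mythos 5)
Your proposal takes a genuinely different route: spectral perturbation theory for the sub-Markov transfer operator $\widetilde{\mathcal{L}}_m$, versus the paper's explicit construction and elementary contraction estimates. The paper never invokes quasi-compactness or a Keller--Liverani type expansion; instead Lemma~\ref{lem:cid} solves explicitly for the conditionally invariant density $f_c$ (a two-piece constant function) and Lemma~\ref{lem:const2} produces a parameter-uniform contraction rate $(2/3)^n$ towards $f_c$ by direct algebra with the ratio $r(x,\varepsilon,\delta)$. That explicit route makes the uniformity in $m$ transparent, which is precisely where your Step~2 is incomplete: you need the expansion $\lambda_m = 1 - p_m + o(p_m)$, a spectral gap, and control of the projection coefficient $c_m\to 1$, all uniformly as both the hole shrinks and the underlying map changes with $m$ --- none of this is off-the-shelf, and it is arguably the whole content of the theorem. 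Your closing caveat acknowledges this but does not supply a mechanism.

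Step~3 also contains genuine gaps. Under the no-jump (sub-Markov) dynamics the density relaxes to the conditionally invariant density $f_c$, not to $f_i$; these agree only in the $m\to\infty$ limit, and the distinction matters because it is $\widetilde{\mathcal{L}}_m$, not $\mathcal{L}_m$, that you iterate between jumps. After $u_m := \lceil \log(m/\varepsilon)/\log(4+\varepsilon/m)\rceil$ deterministic expansion steps (your $K_m = \lceil\log_4(m/\varepsilon)\rceil$ is slightly too large, since the slope exceeds $4$), the density of $F_r^{u_m}(Z)$ is roughly uniform on an interval of length $\approx 1/4$ and therefore nowhere near $f_i$ or $f_c$ in total variation; the relaxation requires an \emph{additional} fixed number $N$ of steps (independent of $m$), obtained in the paper via Lemmas~\ref{lem:arrdiff}--\ref{lem:const3}, before the contraction to $f_c$ kicks in. Finally, the post-relaxation error is not $o(1)$ uniformly in the conditioning: as in Lemma~\ref{lem:jumpdensity}, the $L^{\infty}$ error after a jump is $O(\mu B)$ with $B>1$, so it grows by a factor $B$ with each conditioning event, and the paper controls the accumulated $\mu B(d) B^k$ only by taking the initial tolerance $\mu\to 0$ at the very end of Lemma~\ref{ver:arr}. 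Your proposal, as written, does not address this accumulation.
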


\vskip 3mm
\noindent
Let $Y(t)$ be a continuous-time random walk with waiting times, $T_j-T_{j-1}$, exponentially distributed with mean $1/\gamma$.
Then Theorem~\ref{thm:arr} says that for $Y_m(t)$ the probability of the $(k+1)$-th jump occurring at time 
$\lfloor mt_k \rfloor/m + \tau$, given that the first $k$ jumps occurred at times $\lfloor mt_1 \rfloor/m$, 
$\lfloor mt_2 \rfloor/m$, $\dots$, $\lfloor mt_k \rfloor/m$, converges to the probability of the 
$(k+1)$-th jump of $Y(t)$ occurring at time $t_k + \tau$, given that the first $k$ jumps occurred 
at $t_1$, $t_2$, $\dots$, $t_k$. 

\subsection{Conditionally invariant distribution}

While we  discussed invariant distributions of $F_r(x;\varepsilon,\delta)$ above, to prove Theorem~\ref{thm:arr} it will often be more convenient to work with conditional distributions on $[0,1]$ which are invariant with respect to $F(x;\varepsilon, \delta)$ conditioned on the event that the iterative sequence stays in $[0,1]$. 
The use of conditionally invariant densities is common in investigations of dynamical systems with holes, that is, trajectories generated 
by maps $F:\Omega \to F(\Omega)$ with {${\Omega \subsetneqq F(\Omega)}$} 
until the point of escape from $\Omega$. Early results are due to Pianigiani and 
Yorke~\cite{Pianigiani:1979:EMS}, who motivated the discussion with the 
example of a billiard table with chaotic trajectories, and introduced 
the concept of conditionally invariant measures. This idea has been investigated further 
by other authors, for example, Demers and Young studied escape rates through 
the small holes in~\cite{Demers:2005:ERA}. 

In our case it will be useful to work from the point of view of a dynamical system with holes  whenever no jump is occurring.
For a probability density $f$ of a distribution on $[0,1]$, the density after application of $F(x;\varepsilon,\delta)$, conditional on not mapping outside $[0,1]$, is given by the Frobenius-Perron operator~\cite{Pianigiani:1979:EMS}  
\begin{equation} \label{def:frobenius}
 \mathcal{P}_{\varepsilon,\delta}(f)(t) = \begin{cases}
\dfrac{1}{C} \left( \dfrac{f\big(F^{-1}_1(t)\big)}{4+\varepsilon}+\dfrac{f\big(F^{-1}_3(t)\big)}{2+\delta}+\dfrac{f\big(F^{-1}_4(t)\big)}{4+\delta} \right), \quad
\mbox{ if } t \in (0,1/2); \\
\dfrac{1}{C} \left( \dfrac{f\big(F^{-1}_1(t)\big)}{4+\varepsilon}+\dfrac{f\big(F^{-1}_2(t)\big)}{2+\varepsilon}+\dfrac{f\big(F^{-1}_4(t)\big)}{4+\delta} \right),  \quad
\mbox{ if } t \in (1/2,1),
\end{cases}
\end{equation}
where $F_1^{-1}$, $F_2^{-1}$, $F_3^{-1}$, $F^{-1}_4$ denote the inverses of $F(x;\varepsilon,\delta)$ restricted to $(0,1/4)$, $(1/4,1/2)$, $(1/2,3/4)$, $(3/4,1)$, respectively, and normalisation constant
$C$ is chosen so that $\mathcal{P}_{\varepsilon,\delta}$ integrates to $1$ over the unit interval. 

\noindent 
\begin{lemma}\label{lem:cid}
There exists a unique density $f_c(x;\varepsilon, \delta)$ which satisfies $f_c(x;\varepsilon,\delta) = \nu$ on $(0,1/2)$ and $f_c(x;\varepsilon,\delta) =2-\nu$ on $(1/2,1)$ such that $\mathcal{P}_{\varepsilon,\delta}(f_c)=f_c$. We will subsequently call this the conditionally invariant density. It satisfies $f_c(x;\varepsilon,\delta) \rightarrow 1$ as $\varepsilon, \delta \rightarrow 0$.
\end{lemma}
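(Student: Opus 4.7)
The plan is to exploit the fact that the normalised Frobenius--Perron operator $\mathcal{P}_{\varepsilon,\delta}$ preserves the one-parameter family of two-step densities
$f_\nu := \nu\cdot\mathds{1}_{(0,1/2)} + (2-\nu)\cdot\mathds{1}_{(1/2,1)}$,
reducing the infinite-dimensional fixed-point problem to a scalar algebraic equation in $\nu$. The invariance is verified by reading off the images of the four monotone branches of $F(x;\varepsilon,\delta)$: branches $1,3,4$ are the ones that contribute to $t\in(0,1/2)$, with branch $1$'s preimage lying in $(0,1/2)$ and the other two in $(1/2,1)$; branches $1,2,4$ contribute to $t\in(1/2,1)$, with the preimages from branches $1,2$ in $(0,1/2)$ and that from branch $4$ in $(1/2,1)$. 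Writing $a = 1/(4+\varepsilon)$, $b = 1/(2+\varepsilon)$, $c = 1/(2+\delta)$, $d = 1/(4+\delta)$ for the reciprocal branch slopes, the unnormalised operator therefore sends $f_\nu$ to the step function with constant values
$\alpha(\nu) = \nu a + (2-\nu)(c+d)$ on $(0,1/2)$ and $\beta(\nu) = \nu(a+b) + (2-\nu)d$ on $(1/2,1)$.

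Dividing by the total mass $C(\nu) = (\alpha(\nu)+\beta(\nu))/2$ and requiring $\mathcal{P}_{\varepsilon,\delta}(f_\nu)=f_\nu$ on $(0,1/2)$ reduces to the single scalar condition
$P(\nu) := (2-\nu)\alpha(\nu) - \nu\beta(\nu) = 0$,
which is at most quadratic in $\nu$. I would then evaluate the endpoints: $P(0) = 2\alpha(0) = 4(c+d)>0$ and $P(2) = -2\beta(2) = -4(a+b)<0$, so the intermediate value theorem produces at least one root $\nu_\ast\in(0,2)$. A quadratic that is positive at $0$ and negative at $2$ cannot possess a second root in $(0,2)$, as that would force $P(2)>0$; this yields uniqueness of $\nu_\ast$ and hence of the conditionally invariant density $f_c := f_{\nu_\ast}$ of the prescribed form.

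For the convergence $f_c\to 1$, I would pass to the limit $\varepsilon,\delta\to 0$. Then $(a,b,c,d)\to(1/4,1/2,1/2,1/4)$, so $\alpha(\nu)\to 3/2-\nu/2$ and $\beta(\nu)\to 1/2+\nu/2$, and the equation $P(\nu)=0$ collapses to the linear equation $3(1-\nu)=0$ with unique root $\nu=1$. Continuous dependence of the coefficients of $P$ on $(\varepsilon,\delta)$, together with the uniqueness of the root in $(0,2)$ established above, forces $\nu_\ast(\varepsilon,\delta)\to 1$, and therefore $f_c(x;\varepsilon,\delta)\to 1$ uniformly on $[0,1]\setminus\{1/2\}$. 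The only real obstacle is the branch-by-branch bookkeeping needed to derive the formulas for $\alpha$ and $\beta$ correctly; once those are in hand, the remainder of the argument is elementary algebra.
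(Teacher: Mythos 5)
Your proof is correct and follows essentially the same approach as the paper: both reduce the fixed-point equation to an at-most-quadratic scalar condition in $\nu$ by observing that $\mathcal{P}_{\varepsilon,\delta}$ preserves two-step densities, then pass to the limit $\varepsilon,\delta\to 0$. Your intermediate-value and root-counting argument for existence and uniqueness, and your continuity argument for $\nu_*\to 1$, are somewhat more explicit than the paper's terse ``solving this quadratic equation'' followed by the linearization $\nu_*\approx 1+(\varepsilon-\delta)/12$, but the core computation is identical.
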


\begin{proof}

Suppose $f$ is a density which is constant equal $\nu$ on $(0,1/2)$ and $1-\nu$ on $(1/2,1)$. It satisfies $\mathcal{P}_{\varepsilon,\delta}(f) =f$ if and only if $\nu$ satisfies the following equation $$ \nu \left( \dfrac{1}{2}\left( \dfrac{\nu}{4+\varepsilon}+\dfrac{2-x}{2+\delta}+\dfrac{2-\nu}{4+\delta}\right)+\dfrac{1}{2}\left( \dfrac{\nu}{4+\varepsilon}+\dfrac{\nu}{2+\varepsilon}+\dfrac{2-\nu}{4+\delta}\right) \right)= \left(\dfrac{\nu}{4+\varepsilon}+\dfrac{2-\nu}{2+\delta}+\dfrac{2-\nu}{4+\delta}\right).$$ This equation is obtained from the first line of equation (\ref{def:frobenius}), the left corresponds to normalisation constant $C$ multiplied by $\nu$.
Solving this quadratic equation, we obtain for all $\varepsilon, \delta \geq 0$ a unique solution $\nu$ with both $\nu \geq 0$ and $2-\nu \geq 0$, and also the unique conditionally invariant density $f(x;\varepsilon,\delta)$ described in Lemma \ref{lem:cid}. This solution linearises to
 $$f_c(x;\varepsilon, \delta) \approx \begin{cases} 1 + \varepsilon/12 - \delta/12, \quad \mbox{ if } x \in (0,1/2);
 \\
 1 -  \varepsilon/12 + \delta/12, \quad \mbox{ if } x \in (1/2,1),
 \end{cases}$$ for small $\varepsilon, \delta$. In particular $f_c(x;\varepsilon,\delta) \rightarrow 1$ as $\varepsilon, \delta \rightarrow 0$. \end{proof}
 
 \vskip 5mm
 \subsection{Convergence to the conditionally invariant density}

In this subsection we make a first step towards proving Theorem \ref{thm:arr}.  We show for some initial densities $k$ that $\mathcal{P}^n_{\varepsilon,\delta}(k)$ does not only converge to the corresponding conditionally invariant density $f_c$, but that there is an upper bound on the convergence speed which works for all $\varepsilon >0$ and $\delta>0$. 

Pianigiani and Yorke extensively studied existence of and convergence to conditionally invariant densities  for expanding maps in \cite{Pianigiani:1979:EMS}. While their approach does not give us the desired bound on convergence speed,   one of their results, the lemma stated below, will be very useful in our proof. 

\newpage
\begin{lemma}[Pianigiani-Yorke]\label{yorke}
Let $\mathcal{P}_F$ be the Frobenius-Perron operator corresponding to a map $F$ on $[0,1]$. Suppose $f$, $g$ are Lebesgue integrable over $[0,1]$ with $\int^1_0 f(x) \, \mbox{\rm d} x = \int^1_0 g(x) \, \mbox{\rm d} x = 1$, $\displaystyle \inf_{[0,1]} f(x) >0,$ $\sup_n||\mathcal{P}_F^n(g)||_{\linfty} < \infty$ and $\sup_n||\mathcal{P}_F^n(1)||_{\linfty} < \infty$. Then there exists some $L$ such that for $n \geq 1$  
$$||\mathcal{P}_F^n(f)-\mathcal{P}_F^n(g)||_{\linfty} \leq L ||f-g||_{\linfty}$$
is satisfied. More precisely, we may take 
$$L =  \dfrac{1}{\displaystyle \inf_{[0,1]} f}\left(\sup_n||\mathcal{P}_F^n(1)||_{\linfty} +\sup_n||\mathcal{P}_F^n(g)||_{\linfty} \right).$$
\end{lemma}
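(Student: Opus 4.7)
The plan is to decompose $f - g$ into its positive and negative parts and bound each separately using linearity and positivity of the Frobenius-Perron operator $\mathcal{P}_F$. Setting $M = ||f - g||_{\linfty}$ and writing $f - g = (f-g)^+ - (f-g)^-$ with both parts nonnegative, positivity of $\mathcal{P}_F^n$ yields
$$|\mathcal{P}_F^n(f-g)(x)| \leq \mathcal{P}_F^n\big((f-g)^+\big)(x) + \mathcal{P}_F^n\big((f-g)^-\big)(x)$$
for every $x$. The goal is then to produce two pointwise bounds of a very specific shape: $(f-g)^+ \leq (M/\inf f)\cdot 1$ and $(f-g)^- \leq (M/\inf f)\cdot g$. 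Monotonicity of $\mathcal{P}_F^n$ will then convert these into bounds involving only the uniformly controlled quantities $\sup_n ||\mathcal{P}_F^n(1)||_{\linfty}$ and $\sup_n ||\mathcal{P}_F^n(g)||_{\linfty}$, matching the constant $L$ stated in the lemma.

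The bound on $(f-g)^+$ is immediate once we observe that $\inf f \leq 1$: since $f$ is a probability density on $[0,1]$, having $f > 1$ everywhere would force $\int_0^1 f > 1$. Hence $M \leq M/\inf f$, and so $(f-g)^+ \leq M \leq (M/\inf f)\cdot 1$ pointwise, giving $\mathcal{P}_F^n((f-g)^+) \leq (M/\inf f)\,\mathcal{P}_F^n(1)$.

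The main obstacle is the pointwise estimate $(f-g)^- \leq (M/\inf f)\,g$, since the obvious bound $(f-g)^- \leq g$ lacks the crucial factor of $M$. The inequality is trivial at points where $g \leq f$, so it suffices to treat $g > f$, in which case it rearranges to $g(1 - M/\inf f) \leq f$. If $M \geq \inf f$, the left-hand side is non-positive and the bound is immediate. If $M < \inf f$, I will combine $g \leq f + M$ (from $|f-g| \leq M$) with $f \geq \inf f$ to compute
$$(f+M)\Big(1 - \frac{M}{\inf f}\Big) - f \;=\; \frac{M}{\inf f}\big(\inf f - f - M\big) \;\leq\; -\frac{M^2}{\inf f} \;\leq\; 0,$$
so that $g(1 - M/\inf f) \leq (f+M)(1 - M/\inf f) \leq f$, where the first step uses positivity of $1 - M/\inf f$ in this case. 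Monotonicity of $\mathcal{P}_F^n$ then gives $\mathcal{P}_F^n((f-g)^-) \leq (M/\inf f)\,\mathcal{P}_F^n(g)$.

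Adding the two pointwise estimates yields $|\mathcal{P}_F^n(f-g)| \leq (M/\inf f)\big(\mathcal{P}_F^n(1) + \mathcal{P}_F^n(g)\big)$; taking the sup-norm on $[0,1]$ and then replacing each sup-norm on the right by its supremum over $n$ using the hypothesized uniform bounds delivers exactly the value of $L$ claimed in the lemma.
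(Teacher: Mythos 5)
Your two pointwise inequalities $(f-g)^+ \leq (M/\inf f)\cdot 1$ and $(f-g)^- \leq (M/\inf f)\,g$ are both correct, and the plan would give a clean proof if $\mathcal{P}_F$ were a \emph{linear} positive operator. The gap is that in this paper (and in Pianigiani--Yorke's Proposition~1, which the lemma invokes) $\mathcal{P}_F$ is the \emph{conditional} Frobenius--Perron operator of a map with holes: in equation~(\ref{def:frobenius}) the normalisation constant $C$ depends on the input density, so $\mathcal{P}_{\varepsilon,\delta}$ is not linear --- it is homogeneous of degree zero, $\mathcal{P}(ch)=\mathcal{P}(h)$ for $c>0$. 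Consequently both the decomposition $\mathcal{P}_F^n(f-g)=\mathcal{P}_F^n((f-g)^+)-\mathcal{P}_F^n((f-g)^-)$ and the inference $h_1 \leq c\,h_2 \Rightarrow \mathcal{P}_F^n(h_1)\leq c\,\mathcal{P}_F^n(h_2)$ fail, and the argument does not go through as written.

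The repair is to route the estimate through the \emph{unnormalised} transfer operator $\hat{\mathcal{P}}$ (the same formula with $C=1$), which is linear, positive and monotone, and to use the identity $\mathcal{P}^n(f) = \hat{\mathcal{P}}^n(f)/\lambda_n^f$ where $\lambda_n^f = \int_0^1 \hat{\mathcal{P}}^n(f)\,dx$ (proved by a short induction). Then
$$
\mathcal{P}^n(f)-\mathcal{P}^n(g)
= \frac{\hat{\mathcal{P}}^n(f-g)}{\lambda_n^f}
 \;-\; \frac{\lambda_n^f-\lambda_n^g}{\lambda_n^f}\,\mathcal{P}^n(g),
$$
and the elementary estimates $|\hat{\mathcal{P}}^n(f-g)| \leq \|f-g\|_{\infty}\,\hat{\mathcal{P}}^n(1)$, $|\lambda_n^f-\lambda_n^g| \leq \|f-g\|_{\infty}\,\lambda_n^1$, and $\lambda_n^f \geq (\inf_{[0,1]} f)\,\lambda_n^1$ bound the two terms in sup-norm by $(\|f-g\|_{\infty}/\inf f)\sup_n\|\mathcal{P}^n(1)\|_{\infty}$ and $(\|f-g\|_{\infty}/\inf f)\sup_n\|\mathcal{P}^n(g)\|_{\infty}$ respectively, giving exactly the stated $L$. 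Note that in this corrected version only $|f-g|\leq M$ and $f\geq\inf f$ are used, so the more delicate of your two pointwise bounds turns out not to be needed.
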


\begin{proof}
We can apply \cite[Proposition 1]{Pianigiani:1979:EMS}. The original statement requires that $f, g \in K$ where $K = \{ f \in C([0,1]) : \sup_{[0,1]} f(x) < \infty, \inf^1_0 f(x)\mbox{\rm d} x >0, \int^1_0 f(x)\mbox{\rm d} x = 1 \}$, but the proof also works for the assumptions of Lemma~\ref{yorke}. Note that in this case, we define the infinum by $\inf_{[0,1]}f = \inf\{s \in \mathbb{R} : \lambda (\{ x \in [0,1] : f(x) <s\})=0\}$.
\end{proof}

\noindent
We now have all the necessary tools to prove Theorem \ref{thm:arr}. The key idea is to approximate densities by piecewise constant densities.

\begin{lemma} \label{lem:const2}
Let $\mu>0$. Then there exists $\omega >0$ and $N \in \mathbb{N}$ such that for any piecewise constant density $k:[0,1] \rightarrow [0,2]$ with \begin{equation} \label{equ:kpl} k(t) = \begin{cases}
x, \quad \,\, \;\;\,\,\;\; \;\;\;\mbox{ if } t \in (0,1/2); \\
2-x, \quad \,\,\,\,\, \mbox{ if } t \in (1/2,1), \rule{0pt}{4mm}
\end{cases}\end{equation}
whenever $n \geq N$, $x \in [0,2]$ and $0<\varepsilon, \delta <\omega$, then 
$||\mathcal{P}_{\varepsilon,\delta}^n(k)-f_{c}(\, \cdot \, ;\varepsilon,\delta)||_{\linfty}<\mu$,
where the map ${f_{c}(\, \cdot \, ;\varepsilon,\delta):[0,1] \rightarrow \mathbb{R}}$ is the conditionally invariant density.
\end{lemma}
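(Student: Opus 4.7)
My approach hinges on a key structural observation: the one-parameter family of densities described by (\ref{equ:kpl}) is invariant under $\mathcal{P}_{\varepsilon,\delta}$, which reduces Lemma~\ref{lem:const2} to the convergence of a one-dimensional contraction. First I would verify this invariance by a direct branch-by-branch calculation from (\ref{def:frobenius}). For $t \in (0,1/2)$ the three preimages $F_1^{-1}(t), F_3^{-1}(t), F_4^{-1}(t)$ land in $(0,1/4), (1/2,3/4), (3/4,1)$ respectively, so $k$ evaluates to $x, 2-x, 2-x$ there; for $t \in (1/2,1)$ the preimages $F_1^{-1}(t), F_2^{-1}(t), F_4^{-1}(t)$ lie in $(0,1/4), (1/4,1/2), (3/4,1)$ and $k$ takes values $x, x, 2-x$. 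Setting
\begin{align*}
\alpha(x;\varepsilon,\delta) &= \frac{x}{4+\varepsilon} + \frac{2-x}{2+\delta} + \frac{2-x}{4+\delta}, \\
\beta(x;\varepsilon,\delta) &= \frac{x}{4+\varepsilon} + \frac{x}{2+\varepsilon} + \frac{2-x}{4+\delta},
\end{align*}
the normalisation $C = (\alpha+\beta)/2$ gives $\mathcal{P}_{\varepsilon,\delta}(k)$ equal to $2\alpha/(\alpha+\beta)$ on $(0,1/2)$ and $2\beta/(\alpha+\beta)$ on $(1/2,1)$. These two values sum to $2$, so $\mathcal{P}_{\varepsilon,\delta}(k)$ is again of the form (\ref{equ:kpl}) with new parameter $T_{\varepsilon,\delta}(x) := 2\alpha(x;\varepsilon,\delta)/(\alpha(x;\varepsilon,\delta)+\beta(x;\varepsilon,\delta))$.

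By induction, $\mathcal{P}_{\varepsilon,\delta}^n(k)$ has the same shape with parameter $T_{\varepsilon,\delta}^n(x)$. The sup-norm distance between two densities of this form equals the absolute difference of their parameters, and Lemma~\ref{lem:cid} identifies the unique fixed point $\nu(\varepsilon,\delta)$ of $T_{\varepsilon,\delta}$ on $[0,2]$ with the constant defining $f_c(\,\cdot\,;\varepsilon,\delta)$. Hence the lemma reduces to showing $|T_{\varepsilon,\delta}^n(x) - \nu(\varepsilon,\delta)| < \mu$ uniformly in $x \in [0,2]$ and $\varepsilon,\delta \in (0,\omega)$ for $n \geq N$.

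I would establish this via a uniform contraction estimate. Since $\alpha, \beta$ are affine in $x$,
$$T_{\varepsilon,\delta}'(x) = \frac{2\bigl(\alpha'(x)\beta(x) - \alpha(x)\beta'(x)\bigr)}{(\alpha(x)+\beta(x))^2}$$
is a rational function of $(x,\varepsilon,\delta)$ with no singularities on $[0,2]\times[0,\infty)^2$. Evaluating at $\varepsilon=\delta=0$ yields $\alpha(x) = (3-x)/2$, $\beta(x) = (x+1)/2$, $\alpha+\beta \equiv 2$, and $T_{0,0}'(x) \equiv -1/2$ for every $x \in [0,2]$. By joint continuity of $T_{\varepsilon,\delta}'(x)$ in $(x,\varepsilon,\delta)$ together with compactness of $[0,2]$, I can choose $\omega>0$ so that $|T_{\varepsilon,\delta}'(x)| \leq \rho := 3/4$ for all $x \in [0,2]$ and $0 < \varepsilon,\delta < \omega$. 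The mean value theorem then gives $|T_{\varepsilon,\delta}^n(x) - \nu(\varepsilon,\delta)| \leq \rho^n|x - \nu(\varepsilon,\delta)| \leq 2\rho^n$, so any $N$ with $2\rho^N < \mu$ works.

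The hard part will be the uniform contraction bound, but once the one-dimensional reduction is in place it collapses, by joint continuity and compactness, to the single unperturbed computation $T_{0,0}'(x) = -1/2$. The branch-by-branch identification of preimages in the first step is routine but must be done carefully to confirm that for all sufficiently small $\varepsilon,\delta$ each preimage lies in the claimed subinterval of $(0,1/2)$ or $(1/2,1)$, so that $k$ takes the asserted value there. Notably this approach bypasses the general Pianigiani--Yorke machinery of Lemma~\ref{yorke}, exploiting instead that the very coarse piecewise constant structure of $k$ is preserved by $\mathcal{P}_{\varepsilon,\delta}$.
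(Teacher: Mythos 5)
Your proposal is correct and takes essentially the same route as the paper. Both proofs hinge on the same key observation: the two-piece form (\ref{equ:kpl}) is closed under $\mathcal{P}_{\varepsilon,\delta}$, so the dynamics reduce to iterating a one-dimensional rational map on the parameter $x\in[0,2]$, and this map is a uniform contraction for small $\varepsilon,\delta$. The paper tracks the secant ratio $r(x,\varepsilon,\delta)=(k-\mathcal{P}k)/(\mathcal{P}k-\mathcal{P}^2k)$ and shows $r(x,0,0)=-2$ with $|r|>3/2$ nearby, while you compute the derivative $T_{0,0}'\equiv-1/2$ and bound $|T_{\varepsilon,\delta}'|\le 3/4$ nearby; these are the same estimate (your $T'$ is the reciprocal of the paper's $r$ at an intermediate point), reached by the same continuity-plus-compactness argument. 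Your use of the mean value theorem to conclude $|T^n(x)-\nu|\le 2\rho^n$ is marginally cleaner than the paper's telescoping of successive differences, but the substance is identical, including the avoidance of Lemma~\ref{yorke} in this particular step.
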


\begin{proof}
By \cite[Theorem 3]{Pianigiani:1979:EMS}, densities $\mathcal{P}_{\varepsilon,\delta}^n(k)$ converge to invariant density $f_c$ as $n \rightarrow \infty$. Lemma~\ref{lem:const2} will now show that this convergence is uniform over all choices of $k$.  So let $k$ be an arbitrary function satisfying the conditions of the Lemma. Density $\mathcal{P}^n_{\varepsilon,\delta}(k)$ is constant for each $n \in \mathbb{N}$ on both $(0,1/2)$ and $(1/2,1)$. First we bound ratio $$r(x,\varepsilon,\delta)= \dfrac{k-\mathcal{P}_{\varepsilon, \delta}(k)}{\mathcal{P}_{\varepsilon, \delta}(k)-\mathcal{P}^2_{\varepsilon, \delta}(k)},$$
where $x$ is the value of $k$ appearing in equation (\ref{equ:kpl}).
Note that on $(0,1/2)$
$$\mathcal{P}_{\varepsilon,\delta}(k)= \dfrac{1}{c_1(x)}\left(\dfrac{x}{4+\varepsilon}+\dfrac{2-x}{2+\delta}+\dfrac{2-x}{4+\delta}\right)=\dfrac{a_1(x)}{c_1(x)};$$
where $c_1(x)$ is the normalisation constant $C$ from formula (\ref{def:frobenius}). Moreover, we obtain $\mathcal{P}_{\varepsilon,\delta}(k) = 2-a_1(x)/c_1(x)$ on $(1/2,1)$ from normalisation.  Further, on $(0,1/2)$,
$$\mathcal{P}^2_{\varepsilon,\delta}(k) = \dfrac{1}{c_2(x)}\left(\dfrac{a_1(x)/c_1(x)}{4+\varepsilon}+\dfrac{2-a_1(x)/c_1(x)}{2+\delta}+\dfrac{2-a_1(x)/c_1(x)}{4+\delta}\right)=\dfrac{a_2(x)}{c_1(x)c_2(x)},$$
where $c_2(x)$ is again the normalisation constant $C$ from formula (\ref{def:frobenius}) and $a_2(x)$ is a linear polynomial equal to $c_1(x)c_2(x)\mathcal{P}^2_{\varepsilon,\delta}(k)$. Note that for fixed parameters $\varepsilon$ and $\delta$ denominators $c_1(x)$ and $c_1(x)c_2(x)$ can also be written as linear polynomials of $x$ and are non-zero. With this notation $r$ can be expressed as 
$$r(x,\varepsilon,\delta)=\dfrac{c_1^2(x)c_2(x)x-c_1(x)c_2(x)a_1(x)}{c_1(x)c_2(x)a_1(x)-c_1(x)a_2(x)},$$
a quotient of two polynomials. As a quotient of a cubic and quadratic polynomial, an explicit calculation shows that denominator and enumerator have the same positive root and that this root has multiplicity $1$ and is equal to the value of the conditionally invariant density $\nu$ from Lemma~\ref{lem:cid}.
 So $r$ can be extended to a continuous function in $x, \varepsilon$ and $\delta$ for $x\geq 0, \varepsilon\geq 0, \delta\geq 0$. For $ \varepsilon=\delta=0$ a calculation gives $r(x,0,0)=-2$. By continuity we can choose some 
$\omega >0$ such that $\varepsilon, \delta < \omega$ and $x \in [0,2]$ implies $|r(x,\varepsilon,\delta)|>3/2$. By repeatedly applying this result, $$|\mathcal{P}_{\varepsilon,\delta}^n(k)(y)-\mathcal{P}^{n+1}_{\varepsilon,\delta}(k)(y)|<(2/3)^n|k(y)-\mathcal{P}_{\varepsilon,\delta}(k)(y)|\leq 2 (2/3)^n, \mbox{ for } y \in (0,1/2)\cup(1/2,1) $$ But as each $\mathcal{P}_{\varepsilon, \delta}^n(k)$ is constant on $(0,1/2)$ and equal to $2-\mathcal{P}_{\varepsilon, \delta}^n(k)(1/4)$ on $(1/2-1)$, it follows that 
$$||\mathcal{P}^n_{\varepsilon, \delta}(k)-f_{c}(\, \cdot \,;\varepsilon,\delta)||_{\linfty} \leq 6(2/3)^n$$ for each $n$. 
\end{proof}

\noindent
Densities with three constant pieces are more convenient for approximating a density conditional on a jump having just occurred, something we will look at in the later parts of the proof of Theorem~\ref{thm:arr}. So we now focus our attention on such densities.

\begin{defn} \label{def:pcd}
For $S \geq 0$, we define set $K_S$ of piecewise constant densities $k:[0,1] \rightarrow [0, \infty)$ with $\int_0^1 k(t)\, \mbox{\rm d} t =1$, which can be written in one of the following two forms 
\begin{equation} \label{equ:KS1}
 k(t) = \begin{cases} 
k_1 \quad \mbox{ if } \; t \in (0,b); \\
k_2 \quad \mbox{ if } \; t \in (b,1/2); \\ 
k_3 \quad \mbox{ if } \; t \in (1/2,1),
\end{cases} \quad \mbox{ where } \quad |k_1-k_2| \leq S \quad \mbox{ for }  b \in (0,1/2),\end{equation} 
or 
\begin{equation} \label{equ:KS2}
 k(t) = \begin{cases} 
k_1 \quad \mbox{ if } \; t \in (0,1/2); \\
k_2 \quad \mbox{ if } \; t \in (1/2,b); \\ 
k_3 \quad \mbox{ if } \; t \in (b,1),
\end{cases} \quad \mbox{ where } \quad |k_2-k_3| \leq S \quad \mbox{ for }  b \in (1/2,1), \end{equation}
where $k_1$, $k_2$, $k_3$ are non-negative constants. We define further map $\Psi:K_S \rightarrow [0, \infty)$ with \begin{equation}\label{equ:psidiff} \Psi(k) = 
\begin{cases} \; |k_1-k_2| \quad \mbox{ for } b \in (0,1/2); \\ \; |k_2-k_3| \quad \mbox{ for } b \in (1/2,1), \end{cases}  \end{equation}
where $b$ is as defined in equations {\rm(\ref{equ:KS1})--(\ref{equ:KS2}).} 
\end{defn}

\begin{lemma} \label{lem:arrdiff}
Let $S >0 $. Then there exist $\omega>0$ and ${B \in [0,1)}$ such that for $0<\varepsilon, \delta <\omega$ 
and any $k \in K_S$ we have $\mathcal{P}_{\varepsilon,\delta}(k)=k' \in K_S$ with $\Psi(k') \leq B \, \Psi(k)$.
\end{lemma}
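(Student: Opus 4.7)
I would approach this by a direct case analysis on the position of the internal discontinuity $b$ of $k$, computing $\mathcal{P}_{\varepsilon,\delta}(k)$ explicitly from (\ref{def:frobenius}) in each case and then reading off its membership in $K_S$ together with the contraction factor on $\Psi$. By the $\varepsilon \leftrightarrow \delta$ symmetry of $F(x;\varepsilon,\delta)$ under the reflection $x \mapsto 1-x$, it would be enough to handle the first form~(\ref{equ:KS1}); the second form~(\ref{equ:KS2}) is parallel after relabelling. So I would fix a typical $k \in K_S$ of the first form, with jump $|k_1 - k_2| \leq S$ at $b \in (0, 1/2)$.

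The key observation is that since $k$ is already constant on $(1/2,1)$, only the inverse branches $F_1^{-1}$ and $F_2^{-1}$ (whose images lie in $(0,1/2)$) can move the jump of $k$ at $b$ into a new interior discontinuity of $\mathcal{P}_{\varepsilon,\delta}(k)$; the pullbacks $F_3^{-1}$ and $F_4^{-1}$ see only the constant region. The candidate locations of the new discontinuity are $F_1(b) = (4+\varepsilon)b$, which falls in $(0,1)$ precisely when $b < 1/(4+\varepsilon)$, and $F_2(b) = (3+\varepsilon)/2 - (2+\varepsilon)b$, which falls in $(0,1)$ precisely when $b > (1+\varepsilon)/(2(2+\varepsilon))$. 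This partitions the allowed range of $b$ into four sub-intervals. For $b < 1/(2(4+\varepsilon))$ the new jump lands in $(0,1/2)$ via $F_1^{-1}$ and $\mathcal{P}_{\varepsilon,\delta}(k)$ has form~(\ref{equ:KS1}) with $b' = (4+\varepsilon)b$. For $b \in [1/(2(4+\varepsilon)), 1/(4+\varepsilon))$ the same jump lands in $[1/2,1)$ and the output has form~(\ref{equ:KS2}). For $b$ in the small interval $[1/(4+\varepsilon), (1+\varepsilon)/(2(2+\varepsilon))]$ around $1/4$, neither $F_1^{-1}$ nor $F_2^{-1}$ crosses $b$ in its relevant range, so $\mathcal{P}_{\varepsilon,\delta}(k)$ is effectively two-piece and $\Psi = 0$. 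Finally, for $b \in ((1+\varepsilon)/(2(2+\varepsilon)), 1/2)$ the new jump lands in $(1/2,1)$ via $F_2^{-1}$ and the output has form~(\ref{equ:KS2}). A direct evaluation of (\ref{def:frobenius}) in each non-trivial sub-case then shows that the new jump size is exactly $\Psi(k)/(Cs)$, where $s \in \{4+\varepsilon,\, 2+\varepsilon\}$ is the local slope of $F$ at $b$.

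Next, to pin down $B$ and $\omega$, I would establish a uniform lower bound on the normalisation $C$ over $K_S$. From $\int_0^1 k = 1$ and $k \geq 0$ one immediately obtains $k_3 \leq 2$, and combining $k_1 b + k_2(1/2 - b) \leq 1$ with $|k_1 - k_2| \leq S$ yields $\max(k_1, k_2) \leq 4 + S$. Since the escape set $\{F \notin [0,1]\}$ has total Lebesgue measure $\ell(\varepsilon) + \ell(\delta)$ and $k$ is bounded there by $4 + S$, I would obtain
\[ 1 - C \;\leq\; (4 + S)\,(\ell(\varepsilon) + \ell(\delta)), \]
which vanishes as $\varepsilon, \delta \to 0$. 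The worst-case factor $1/(C(2+\varepsilon))$, arising in the final sub-case where $b$ is close to $1/2$, can therefore be kept below some fixed $B \in (1/2, 1)$ uniformly over $K_S$ by taking $\omega$ small enough. Since $\Psi(\mathcal{P}_{\varepsilon,\delta}(k)) \leq B\,\Psi(k) \leq BS < S$, the output is automatically returned to $K_S$.

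The main obstacle will be the bookkeeping across the four sub-cases, especially the boundary values of $b$ where the new discontinuity coincides with $t = 1/2$ so that the output genuinely collapses to two pieces (subsumed in the degenerate sub-case around $1/4$). A more conceptually important subtlety is the uniform lower bound on $C$: without the constraint $|k_1 - k_2| \leq S$, nothing would prevent a density in $K_S$ from concentrating arbitrarily near the holes of $F$ around $1/4$ and $3/4$, driving $C$ to zero and destroying the uniform contraction.
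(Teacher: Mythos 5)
Your proposal is correct and follows essentially the same strategy as the paper: compute $\mathcal{P}_{\varepsilon,\delta}(k)$ directly from the Frobenius--Perron formula, observe that the new jump magnitude equals $\Psi(k)/(Cs)$ for a local slope $s > 2$, and bound $C$ from below uniformly over $K_S$ by first deriving an $L^\infty$ bound on $K_S$ and then using the measure $\ell(\varepsilon)+\ell(\delta)$ of the escape set. The paper compresses the case analysis to one representative sub-case and the phrase ``proceeding in the same way,'' whereas you spell out the four sub-intervals for $b$; the paper also gets the sharper bound $2+S$ on $\|k\|_\infty$ where you state $4+S$, but that is immaterial to the argument.
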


\begin{proof}
Let $k \in K_S$. Let $b$ be as defined in equations (\ref{equ:KS1})--(\ref{equ:KS2}).  First, assume $b(4+\varepsilon)<1/2$. 
A direct calculation leads to $$\mathcal{P}_{\varepsilon, \delta}(k)(t)= \begin{cases}
k_1'\;=\;\dfrac{1}{C}\left(\dfrac{k_1}{4+\varepsilon}+\dfrac{k_3}{2+\delta}+\dfrac{k_3}{4+\delta} \right), \quad
\mbox{ if } t \in \left( 0, b(4+\varepsilon)\right); \\
k_2'\;=\;\dfrac{1}{C}\left(\dfrac{k_2}{4+\varepsilon}+\dfrac{k_3}{2+\delta}+\dfrac{k_3}{4+\delta} \right),
\quad \mbox{ if } t \in \left(b(4+\varepsilon),1/2\right); \rule{0pt}{6mm}\\ 
k_3'\;=\;\dfrac{1}{C}\left(\dfrac{k_2}{4+\varepsilon}+\dfrac{k_2}{2+\varepsilon}+\dfrac{k_3}{4+\delta} \right),
\quad \mbox{ if } t \in \left(1/2, 1\right),\rule{0pt}{6mm}
\end{cases}$$
where $C$ is the normalisation constant. The difference between the values of $\mathcal{P}_{\varepsilon, \delta}(k)$ on $(0,1/2)$ is given by 
$$
|k_1'-k_2'|= \dfrac{1}{C} \dfrac{|k_1-k_2|}{4+\varepsilon}
.$$
Proceeding in the same way for all other possible choices of $b$ we get  \begin{equation}\label{equ:arrdiff2} \Psi(k') \leq \Psi(k)/(2C).\end{equation}
Now we want to bound $C$ below. Say $b>1/2$. If $k_2>m$, then $k_3>m-S$ and as the density is non-negative, $\int_0^1k(t) \,\mbox{d}t >(m-S)/2$. But as $\int_0^1k(t)\,\mbox{d}t=1$ we get a contradiction for $m \geq 2+S$. Similarly if instead $k_3>m$. We also need $k_1 \leq 2$. For $k$ constant on $(1/2,1)$ we proceed in the same way. So all functions in $K_S$ are bounded above by $2+S$. Let $\ell(\varepsilon)+\ell(\delta)$ be the Lebesgue  measure of the subset of $[0,1]$ which is mapped outside the unit interval by $F(t;\varepsilon, \delta)$, as in equation~(\ref{equ:spikelength}). Then $\ell(\varepsilon)+\ell(\delta) \rightarrow 0$ as $\varepsilon, \delta \rightarrow 0$. For sufficiently small parameters, say $0 < \varepsilon, \delta < \omega$, we will have normalisation constant $C \geq 1-(\ell(\varepsilon)+\ell(\delta))(2+S) \geq 2/3$. Then substituting into inequality~(\ref{equ:arrdiff2}), we obtain inequality~(\ref{equ:arrdiff2}).
\end{proof}
\noindent
Next we note how Lemma \ref{yorke} can be applied to densities of this piecewise constant form. 

\begin{corollary} \label{lem:appyorke}
Let $S \geq 0$ and $s>0$. Let $K_S$ be as described in Lemma  {\rm\ref{lem:arrdiff}}.  There exists $L>0$ and $\omega>0$ such that for any $0< \varepsilon, \delta <\omega$, $g \in K_S$, density $f$ with $\inf_{[0,1]} f >s$ and $n \geq 1$ $$||\mathcal{P}^n_{\varepsilon, \delta}(f)-\mathcal{P}^n_{\varepsilon, \delta}(g)||_{\linfty} \leq L||f-g||_{\linfty}.$$
\end{corollary}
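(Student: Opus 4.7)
The plan is to invoke Pianigiani--Yorke (Lemma~\ref{yorke}) with the Frobenius--Perron operator $\mathcal{P}_{\varepsilon,\delta}$ and extract an upper bound on $L$ which is uniform in $g \in K_S$ and in sufficiently small $\varepsilon, \delta > 0$. The key task reduces to verifying the two sup-norm hypotheses of Lemma~\ref{yorke} uniformly across these parameters, after which the bound on $L$ falls out of the explicit formula at the end of that lemma.

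First I would let $\omega$ be the constant provided by Lemma~\ref{lem:arrdiff}. For any $0 < \varepsilon, \delta < \omega$ that lemma yields $\mathcal{P}_{\varepsilon,\delta}(k) \in K_S$ whenever $k \in K_S$, since $\Psi(\mathcal{P}_{\varepsilon,\delta}(k)) \leq B\,\Psi(k) \leq BS < S$, while the piecewise-constant structure is preserved by construction. By induction $\mathcal{P}_{\varepsilon,\delta}^n(g) \in K_S$ for every $n \geq 1$ and every $g \in K_S$. The proof of Lemma~\ref{lem:arrdiff} already establishes that every density in $K_S$ is bounded above by $2 + S$, so
$$\sup_{n \geq 1} ||\mathcal{P}_{\varepsilon,\delta}^n(g)||_{\linfty} \leq 2 + S,$$
uniformly in $g \in K_S$ and in $(\varepsilon,\delta)$. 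Moreover the constant density $1$ lies in $K_0 \subseteq K_S$, so the identical argument gives $\sup_{n} ||\mathcal{P}_{\varepsilon,\delta}^n(1)||_{\linfty} \leq 2 + S$.

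Finally, the hypothesis $\inf_{[0,1]} f > s$ yields $1/\inf_{[0,1]} f < 1/s$. Substituting these two uniform sup-norm bounds and the control on the infimum into the explicit expression for $L$ supplied by Lemma~\ref{yorke} gives
$$||\mathcal{P}_{\varepsilon,\delta}^n(f) - \mathcal{P}_{\varepsilon,\delta}^n(g)||_{\linfty} \leq \frac{2(2+S)}{s}\, ||f - g||_{\linfty}$$
for all $n \geq 1$, all $0 < \varepsilon, \delta < \omega$, all $g \in K_S$, and all $f$ with $\inf f > s$. Hence the corollary holds with $L = 2(2+S)/s$.

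I do not foresee a genuine obstacle. The only non-routine ingredient is invariance of $K_S$ under $\mathcal{P}_{\varepsilon,\delta}$ together with the uniform sup-norm bound on its members, and Lemma~\ref{lem:arrdiff} delivers both of these; the role of Pianigiani--Yorke is purely to convert these ingredients into the desired Lipschitz-type estimate. The mildest subtlety is merely recognising that the constant density $1$ belongs to $K_S$, which is immediate from Definition~\ref{def:pcd}.
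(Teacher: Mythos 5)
Your proposal is correct and follows essentially the same route as the paper: invoke Lemma~\ref{lem:arrdiff} to get the invariance $\mathcal{P}^n_{\varepsilon,\delta}(g) \in K_S$ and the uniform bound $2+S$, then feed these into the explicit formula for $L$ in Lemma~\ref{yorke}. The only (harmless) difference is that the paper bounds $\sup_n ||\mathcal{P}^n_{\varepsilon,\delta}(1)||_{\infty} \leq 2$ directly (observing $\mathcal{P}_{\varepsilon,\delta}(1)$ is constant on each half-interval), giving $L=(4+S)/s$, whereas you place $1 \in K_0 \subseteq K_S$ and use the cruder $2+S$ bound to get $L = 2(2+S)/s$; both suffice.
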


\begin{proof}
By Lemma \ref{lem:arrdiff} there exists $\omega>0$ such that $0<\varepsilon, \delta < \omega$ and $g \in K_S$ imply $\mathcal{P}_{\varepsilon, \delta}^n(g) \in K_S$ for each $n \geq 1$. Using the last paragraph of the proof of Lemma \ref{lem:arrdiff}, then $\sup_n||\mathcal{P}^n_{\varepsilon, \delta}(g)||_{\linfty} \leq 2+S$. Also, since $\mathcal{P}_{\varepsilon, \delta}(1)$ is constant on both $(0,1/2)$ and $(1/2,1)$, we have $||\mathcal{P}^n_{\varepsilon, \delta}(1)||_{\linfty} \leq 2$. By applying Lemma \ref{yorke}, we find that for $0<\varepsilon, \delta<\omega$, density $f$ with $\inf_{[0,1]} f>s$  satisfies $$||\mathcal{P}^n_{\varepsilon, \delta}(f)-\mathcal{P}^n_{\varepsilon, \delta}(g)||_{\linfty} \leq L||f-g||_{\linfty},$$ where $L=(4+S)/s$.\end{proof}

\begin{lemma} \label{lem:const3}
Let $\mu>0$ and $S>0$. There exist $\omega>0$ and $N \in \mathbb{N}$ such that for all $n \geq N$, $0 < \varepsilon, \delta < \omega$ and $k \in K_S$ 
$$||\mathcal{P}^n_{\varepsilon, \delta}(k)-f_{c_{\varepsilon, \delta}}||_{\linfty} < \mu.$$
\end{lemma}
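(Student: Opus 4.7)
The plan is to use the geometric contraction of $\Psi$ from Lemma~\ref{lem:arrdiff} to reduce any $k \in K_S$ to a density close to a two-piece density of the form required by Lemma~\ref{lem:const2}, apply that lemma to get convergence to $f_c$, and bridge the two estimates via Corollary~\ref{lem:appyorke}. Fix $\mu>0$ and $S>0$. Iterating Lemma~\ref{lem:arrdiff}, we have $\Psi(\mathcal{P}_{\varepsilon,\delta}^{N_1}(k)) \leq B^{N_1} S$ uniformly in $k \in K_S$ (for $\varepsilon, \delta$ sufficiently small), so for any chosen $\sigma > 0$ we can pick a single $N_1$ with $\Psi(\mathcal{P}_{\varepsilon,\delta}^{N_1}(k)) < \sigma$ for all such $k$.

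Set $g = \mathcal{P}_{\varepsilon,\delta}^{N_1}(k) \in K_S$. I would approximate $g$ by the two-piece density $k^*$ taking value $x = 2 \int_0^{1/2} g(t)\,\mbox{\rm d} t$ on $(0,1/2)$ and $2-x$ on $(1/2,1)$. A direct calculation (using that the three constant values of $g$ differ within each half by at most $\Psi(g)$ from their length-weighted average) shows $||g - k^*||_{\linfty} \leq \Psi(g) < \sigma$, and $k^*$ is a density of the form required in Lemma~\ref{lem:const2} since $x\in[0,2]$. That lemma produces $\omega > 0$ and $N_2 \in \mathbb{N}$ such that $||\mathcal{P}_{\varepsilon,\delta}^{N_2}(k^*) - f_{c}(\,\cdot\,;\varepsilon,\delta)||_{\linfty} < \mu/2$ for $0 < \varepsilon, \delta < \omega$. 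Corollary~\ref{lem:appyorke} applied with $f = k^*$ and $g \in K_S$ then yields $L > 0$ (independent of $k$) such that $||\mathcal{P}_{\varepsilon,\delta}^{N_2}(g) - \mathcal{P}_{\varepsilon,\delta}^{N_2}(k^*)||_{\linfty} \leq L\sigma$. Choosing $\sigma < \mu/(2L)$ first (this fixes $N_1$) and setting $N = N_1 + N_2$, the triangle inequality gives $||\mathcal{P}_{\varepsilon,\delta}^{n}(k) - f_{c}(\,\cdot\,;\varepsilon,\delta)||_{\linfty} < \mu$ for all $n \geq N$.

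The main technical obstacle is verifying the hypothesis $\inf_{[0,1]} k^* > s$ of Corollary~\ref{lem:appyorke} uniformly in $k \in K_S$. Since $\inf k^* = \min(x,2-x) = \min\bigl(2\int_0^{1/2}g,\;2\int_{1/2}^1 g\bigr)$, this can fail a priori: for example $k=(3,0,0)$ on the pieces $(0,1/3),(1/3,1/2),(1/2,1)$ lies in $K_S$ for $S\ge 3$ yet $\int_{1/2}^1 k = 0$. To handle this, I would prepend a ``mixing'' step, establishing that there exist $n_0 \in \mathbb{N}$ and $s_0 > 0$ with $\inf_{[0,1]} \mathcal{P}_{\varepsilon,\delta}^{n_0}(k) \geq s_0$ uniformly for $k \in K_S$ and small $\varepsilon,\delta$. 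The key observation is that the pre-images $F_i^{-1}$ in~(\ref{def:frobenius}) mix contributions across the two halves: the value of $\mathcal{P}_{\varepsilon,\delta}(g)$ on each half of $[0,1]$ receives contributions from $g$-values on both halves, and since $\int g = 1$ forces at least one piece value of $g$ to be bounded below, a fixed number of iterations propagates a uniform lower bound to every piece. Running the main argument on $\mathcal{P}_{\varepsilon,\delta}^{n_0}(k)$ in place of $k$ then guarantees $\inf k^* \geq \inf g \geq s_0$, completing the proof with $N = n_0 + N_1 + N_2$.
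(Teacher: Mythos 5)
Your overall strategy matches the paper's: contract $\Psi$ via Lemma~\ref{lem:arrdiff}, approximate the resulting near-two-piece density by a genuine two-piece density, apply Lemma~\ref{lem:const2} to that, and glue the two estimates with Corollary~\ref{lem:appyorke}; the bound $||g-k^*||_{\linfty}\le\Psi(g)$ is correct. The problem is, as you note, the positivity hypothesis of Corollary~\ref{lem:appyorke}, and your proposed fix leaves a genuine gap. You establish $\inf\mathcal{P}^{n_0}_{\varepsilon,\delta}(k)\geq s_0$ and then assert that $\inf g=\inf\mathcal{P}^{n_0+N_1}_{\varepsilon,\delta}(k)\geq s_0$. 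But $\mathcal{P}_{\varepsilon,\delta}$ is a \emph{conditional} Frobenius--Perron operator and does not preserve a pointwise lower bound: one only has $\inf\mathcal{P}_{\varepsilon,\delta}(h)\geq(\inf h)\cdot C^{-1}\inf_t\sum_i|F'(F_i^{-1}(t))|^{-1}$, and the factor multiplying $\inf h$ is close to $1$ but not necessarily $\geq 1$, so the bound can degrade over the $N_1$ additional iterations. This is repairable (since $N_1$ is fixed before $\omega$ is shrunk, the degradation over $N_1$ steps can be made negligible), but as written the inequality $\inf g\geq s_0$ is unjustified. Your uniform mixing claim itself is also only sketched: the number of applications needed for positive mass to spread from a short piece of width $b$ grows like $\log(1/b)$, and to turn this into a \emph{single} $n_0$ depending only on $S$ you need the lower bound $b\geq 1/(2+S)$ coming from $\int k=1$ and $k\leq 2+S$, which your argument does not mention.

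The paper avoids both difficulties by reorganising the argument. It first iterates until $\mathcal{P}^{N_1}_{\varepsilon,\delta}(k)\in K_{S_1}$ with $S_1$ small, and then observes by a short direct computation (using $\int h=1$ together with the near-equality of the two adjacent constant values forced by $\Psi(h)\le S_1$) that \emph{one} further application of $\mathcal{P}_{\varepsilon,\delta}$ already yields a uniform lower bound $s>0$ for all $N_2\geq N_1+1$. Corollary~\ref{lem:appyorke} is then applied with $f=\mathcal{P}^{N_2}_{\varepsilon,\delta}(k)$ and $g$ its two-piece approximation -- the roles of $f$ and $g$ are swapped relative to yours -- so that the iterate on which positivity is known is exactly the iterate being approximated, and no separate mixing step or forward propagation of a lower bound is required.
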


\begin{proof}
First, use Lemma \ref{lem:arrdiff} to observe that for any $S_1 \in (0,S)$ there exist $\omega_1>0$ and $N_1 \in \mathbb{N}$ such that for any $k \in K_S$ and $0<\varepsilon, \delta < \omega_1$ we have $\mathcal{P}^{N_1}_{\varepsilon,\delta}(k) \in K_{S_1}$. But $S_1$ can be chosen small enough that there is some $s>0$ such that for any $k \in K_S$ and $N_2 \geq N_1+1$ we have $\inf_{[0,1] }\mathcal{P}^{N_2}_{\varepsilon,\delta}(k)>s$. 
Apply Corollary \ref{lem:appyorke} with $f=\mathcal{P}^{N_2}_{\varepsilon,\delta}(k)$ to find some $L>0$ and $\omega_2>0$ such that when $0<\varepsilon, \delta < \omega_2$, $N_2 \geq N_1+1$ and $g \in K_S$ then \begin{equation} \label{equ:lemma4.7} \big|\big|\mathcal{P}^{N_2+n}_{\varepsilon, \delta}(k)-\mathcal{P}^n_{\varepsilon, \delta}(g)\big|\big|_{\lbinfty} \leq L\big|\big|\mathcal{P}^{N_2}_{\varepsilon,\delta}(k)-g\big|\big|_{\lbinfty}.\end{equation} Also by Lemma \ref{lem:arrdiff} we can choose $N_2$ such that for any $k \in K_S$ and $0 < \varepsilon, \delta < \min\{\omega_1,\omega_2\}$, there exists a piecewise constant density $g \in K_S$ such that $g$ is constant on $(0,1/2)$, $g$ is constant on $(1/2,1)$ and $||\mathcal{P}_{\varepsilon, \delta}^{N_2}(k)-g||_{\linfty}<\mu/(2L)$. Then by equation  (\ref{equ:lemma4.7}), we get $$\big|\big|\mathcal{P}^{N_2+n}_{\varepsilon, \delta}(k)-\mathcal{P}^n_{\varepsilon, \delta}(g)\big|\big|_{\lbinfty} \leq \dfrac{\mu}{2}$$ for $n \geq 1$. 
By Lemma \ref{lem:const2} we also find $N_3 \in \mathbb{N}$ and $\omega_3>0$ such that for $0<\varepsilon, \delta< \omega_3$ and  $n \geq N_3$ 
$$||\mathcal{P}^n_{\varepsilon, \delta}(g)-f_{c_{\varepsilon,\delta}}||<\dfrac{\mu}{2}.$$ Now take $N = N_2+N_3$ and $\omega = \min\{\omega_1, \omega_2, \omega_3\}$ to complete the proof.
\end{proof}

\subsection{Proof of Theorem \ref{thm:arr}}
 
Recall from Corollary \ref{cor:uni1} that invariant density $f_{i,m}(x)$ converges uniformly to $1$ on $[d, 1-d]$ as $m \rightarrow \infty$ for any $d >0$. In our proof we need this convergence to be extended to all of $[0,1]$. Therefore we first prove an alternate version of Theorem \ref{thm:arr}, in which we do not start with the invariant density $f_i$, but a related density $f'_i$ and process $Y'_m$, defined below.

\begin{defn} \label{def:alternatef}
Fix some $0<d<1/2$ and define
\begin{equation} \label{equ:alternatef}
f'_{i}(x;\varepsilon/m,\delta/m) =\begin{cases} f_{i}(x;\varepsilon/m,\delta/m), \quad \mbox{ if } x \in [d,1-d]; \\ k_m, \quad \;\;\quad \quad \quad \quad  \;\;\; \mbox{ elsewhere, }\end{cases}
\end{equation} where $k_m$ is chosen so that $f'_{i}(x,\varepsilon/m,\delta/m)$ integrates to $1$ over $[0,1]$. 
Define $$Y_m'(t) = \left\lfloor F^{\lfloor mt \rfloor}(X'_m;\varepsilon/m,\delta/m)  \right\rfloor,$$ where $X'_m$ is distributed according to $f'_{i}(x;\varepsilon/m,\delta/m)$. 
Let $T'_{m,1}, T'_{m,2}, \dots$ denote the jump times of $Y'_m(t)$, that is, $$T'_{m,j} = \min \{t \in [T'_{m,j-1},\infty): Y'_m(t) \neq Y'_m(T'_{m,j-1}) \}$$ where $T'_{m,0} = 0$. 
\end{defn}

\noindent
We now describe the behaviour of the first jump $T'_{m,1}$ for process $Y'_m(t)$.
To simplify our notation, we will subsequently write 
\begin{eqnarray*}
f_{c,m}(x)\!\!\!\!&=\,f_{c}(x;\varepsilon/m,\delta/m), \qquad &F_m(x)= F(x;\varepsilon/m, \delta/m), \\
f_{i,m}(x)\!\!\!\!&=\,f_{i}(x;\varepsilon/m,\delta/m), \qquad 
&\mathcal{P}_m=\mathcal{P}_{\varepsilon/m, \delta/m}, \hfill \\
f'_{i,m}(x)\!\!\!\!&=\,f'_{i}(x;\varepsilon/m,\delta/m). \qquad&
\end{eqnarray*}

\begin{lemma}\label{lem:arrfirstjump}
Let $\delta, \varepsilon >0$. Let $0<1/2<d$. For $m \in \mathbb{N}$ let $Y'_m(t)$ and $T'_{m,1}$ be as described in Definition~{\rm\ref{equ:alternatef}.} Then for $\tau >0$
$$\mathbb{P}(T'_{m,1} \leq \tau) \rightarrow 1-\exp(-\gamma\tau),  \quad \mbox{ as } m \rightarrow \infty,$$ where
$\gamma$ is given by equation~{\rm(\ref{equ:gamma})}.
\end{lemma}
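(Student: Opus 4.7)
The plan is to express $\mathbb{P}(T'_{m,1} > \tau)$ as a product of per-step conditional survival probabilities, show that after a bounded number of iterations the conditional density converges uniformly to $f_{c,m}$, and then compute the per-step escape probability asymptotically.

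Let $E_m = \{x \in [0,1] : F_m(x) \notin [0,1)\}$ denote the escape region. By~(\ref{equ:spikelength}) and the expansion $\ell(x) = 3x/16 + O(x^2)$, this set has Lebesgue measure $\lambda(E_m) = \gamma/m + O(m^{-2})$. Write $\tilde g_{m,k}$ for the density of $F_m^k(X'_m)$ conditional on no escape through step $k$, so that $\tilde g_{m,0} = f'_{i,m}$, $\tilde g_{m,k+1} = \mathcal{P}_m\tilde g_{m,k}$, and
$$\mathbb{P}\bigl(T'_{m,1} > n/m\bigr) = \prod_{k=0}^{n-1}(1-\epsilon_{m,k}),\qquad \epsilon_{m,k}=\int_{E_m}\tilde g_{m,k}(x)\,\mbox{\rm d}x.$$
I would apply this identity with $n = \lfloor m\tau\rfloor$.

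The crux is to show that $\|\tilde g_{m,k} - f_{c,m}\|_{\linfty}$ is uniformly small in $m$ for $k$ past a fixed threshold. Corollary~\ref{cor:uni1} together with the normalisation of $f'_{i,m}$ in~(\ref{equ:alternatef}) forces $k_m \to 1$, and hence $f'_{i,m} \to 1$ uniformly on $[0,1]$. Since the constant density $1$ lies in $K_S$ with $\Psi(1) = 0$, Lemma~\ref{lem:const3} yields $N \in \mathbb{N}$ and $\omega > 0$ with $\|\mathcal{P}_m^k 1 - f_{c,m}\|_{\linfty} < \eta/2$ for all $k \geq N$ and $\varepsilon/m,\delta/m < \omega$, while Corollary~\ref{lem:appyorke} applied to $f = f'_{i,m}$ (which has $\inf f'_{i,m} \geq 1/2$ for large $m$) gives $\|\tilde g_{m,k} - \mathcal{P}_m^k 1\|_{\linfty} \leq L\,\|f'_{i,m} - 1\|_{\linfty} < \eta/2$. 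Combining produces $\|\tilde g_{m,k} - f_{c,m}\|_{\linfty} < \eta$ for all $k \geq N$ and sufficiently large $m$. This is the main technical obstacle, and is exactly what the chain of lemmas in this subsection was developed to deliver.

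With uniform closeness in hand, for $k \geq N$ I use Lemma~\ref{lem:cid} ($f_{c,m} = \nu_m$ on $(0,1/2)$, $2-\nu_m$ on $(1/2,1)$, $\nu_m \to 1$) together with the decomposition of $E_m$ into $E_m \cap (0,1/2)$ of measure $\ell(\varepsilon/m)$ and $E_m \cap (1/2,1)$ of measure $\ell(\delta/m)$ to obtain $\epsilon_{m,k} = (\gamma/m)(1 + O(\eta)) + O(m^{-2})$. For $k < N$ the crude bound $\epsilon_{m,k} \leq \|\tilde g_{m,k}\|_{\linfty}\,\lambda(E_m) = O(1/m)$ suffices, where a uniform sup-norm bound on $\tilde g_{m,k}$ follows by induction from the explicit form of $\mathcal{P}_m$. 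Taking logarithms, the quadratic error $\sum_k \epsilon_{m,k}^2 = O(1/m)$ and the first-$N$ contribution $O(N/m)$ both vanish, while the remaining terms sum to $\gamma\tau(1+O(\eta))$. Sending $m \to \infty$ and then $\eta \to 0$ yields $\mathbb{P}(T'_{m,1} > \tau) \to \exp(-\gamma\tau)$, equivalently $\mathbb{P}(T'_{m,1} \leq \tau) \to 1 - \exp(-\gamma\tau)$ as required.
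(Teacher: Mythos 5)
Your proposal is correct and takes essentially the same route as the paper: decompose $\mathbb{P}(T'_{m,1}>\tau)$ into per-step survival probabilities, use Corollary~\ref{lem:appyorke} (with reference density $1$) to transfer uniform convergence of $f'_{i,m}$ to $1$ into $\|\mathcal{P}_m^n f'_{i,m}-\mathcal{P}_m^n 1\|_{\linfty}$ being small, use one of the "constant-piece" lemmas to get $\mathcal{P}_m^n 1\to f_{c,m}$, and then compute the per-step escape probability via $\ell(\varepsilon/m)+\ell(\delta/m)$ and $\nu_m\to 1$. The only cosmetic difference is that you invoke Lemma~\ref{lem:const3} where the paper uses Lemma~\ref{lem:const2} (both apply to the constant density $1$), and you handle the first $N$ factors with a crude $O(N/m)$ bound rather than the paper's integral over $A_m^N$; both work.
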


\begin{proof}
Apply Corollary \ref{lem:appyorke} with $s=1/2$, $S=0$ to find  $L>0$ such that for large enough $m$, say $m \geq M_1$, and densities $f$ with $\inf_{[0,1]} f >1/2$ 
\begin{equation} \label{equ:firstjump1} ||\mathcal{P}_m^n(f)-\mathcal{P}_m^n(1)||_{\linfty} \leq L ||f-1||_{\linfty}.\end{equation} 
Choose $\mu>0$ small enough so that $\mu(1+2d)<d$. Then for large enough $m$, say $m \geq M_2\geq M_1$, we have $|f_{i,m}(x)-1|<\mu$ on $[d,1-d]$ and by considering bounds on $k_m$, the value of $f'_{i,m}$ on $(0,d) \cup (1-d,1)$, we get $||f'_{i,m}-1||_{\linfty} \leq \mu(1+1/(2d)).$ Since  $\mu(1+1/(2d)<1/2$, we have $f'_{i,m}$  
bounded below by $1/2$. Then using equation (\ref{equ:firstjump1}) we have 
\begin{equation} \label{equ:firstjump2} ||\mathcal{P}_m^n(f'_{i,m})-\mathcal{P}_m^n(1)||_{\linfty}\leq L\mu\left(1+\dfrac{1}{2d}\right),\end{equation} when $n \geq 1$ and $m \geq  M_2$.
By Lemma~\ref{lem:const2} there also exists some $M_3\geq M_2$ and $N\in \mathbb{N}$ such that when $m \geq M_3$ and $n \geq N$ we have  $||\mathcal{P}^n_m(1)-f_{c,m}||_{\linfty} < \mu$. Write $B(d) = 1+L(1+1/(2d))$ and observe, using equation (\ref{equ:firstjump2}), that whenever $m \geq M_3$ and $n \geq N$
\begin{equation} \label{equ:firstjump3} ||\mathcal{P}^n_m(f'_{i,m})-f_{c,m}||_{\linfty} \leq \mu B(d) .\end{equation}
Let $\tau>0$. Then pick $m > \max\{M_3, N/\tau\}$ and consider the probability that no jump occurs until time $\tau$ for $Y'_m(t)$, 
$$\mathbb{P}(T'_{m,1}>\tau) = \prod^{\lfloor m \tau \rfloor}_{n=1} \mathbb{P}\left( F^{n}_{m}(X'_m) \in [0,1) \; | \;  F^{1}_{m}(X'_m), \dots,  F^{n-1}_{m}(X'_m) \in [0,1) \right).$$
Write $A_m$ for the subset of $[0,1]$ mapped outside the unit interval by $F_m$. As in equation (\ref{equ:spikelength}), denote by $\ell(\varepsilon/m)+\ell(\delta/m)$ the length of $A_m$. Write $f_{c,m}=\nu_m$ on $(0,1/2)$ and $f_{c,m}=2-\nu_m$ on $(1/2,2)$. Using equation (\ref{equ:firstjump3}), we get the following upper estimate 
$$ 
\prod^{\lfloor m \tau \rfloor}_{n=N+1} \!\!\!\!\mathbb{P} \left( F^{n}_{m}(X'_m) \in [0,1) \; | \;  F^{1}_{m}(X'_m), \dots,  F^{n-1}_{m}(X'_m) \in [0,1) \right) =\prod^{\lfloor m \tau \rfloor}_{n=N+1} \int_{[0,1] \setminus A_{m}} \!\!\!\!\!\!\!\!\!\! \mathcal{P}^{n-1}_{m}(f'_{i,m})(x) \, \mbox{d}x $$
$$
\leq \left(1-\big(\nu_m-\mu B(d)\big)\,\ell\!\left(\dfrac{\varepsilon}{m}\right)-\big(2-\nu_m-\mu B(d)\big)\,\ell\!\left(\dfrac{\delta}{m}\right)\right)^{{\lfloor m \tau \rfloor}-N}.
 $$
Since $m\, \ell (\varepsilon/m)+m\, \ell (\delta/m) \rightarrow \gamma$ as $m \rightarrow \infty$, where $\gamma$ is given by equation (\ref{equ:gamma}), and $\nu_m \rightarrow 1$ as $m \rightarrow \infty$ by Lemma \ref{lem:cid},  the upper bound converges to $\exp[-\gamma(1-\mu B(d))\tau]$ as $m \rightarrow \infty$. By a similar argument we have a lower bound converging to $\exp[-\gamma(1+\mu B(d))\tau]$ as $m \rightarrow \infty$.

Let $A_m^N$ be the subset of $[0,1]$, which is mapped outside $[0,1]$ within at most $N$ applications of $F_{m}$. The Lebesgue measure of $A_m^N$ goes to $0$ as $m \rightarrow \infty$. From equation (\ref{equ:invd}) we notice that $\sup_m||f_{i,m}||_{\linfty} < \infty$ and also $\sup_m||f'_{i,m}||_{\linfty} < \infty$. So by integrating over $A^N_m$, we obtain $$\prod^{N}_{n=1} \mathbb{P}\left( F^{n}_{m}(X'_m) \in [0,1) \; | \;  F^{1}_{m}(X'_m), \dots,  F^{n-1}_{m}(X'_m) \in [0,1) \right)
=1-\displaystyle\int_{A^N_{m}} \!\!\! f'_{i,m}(x) \, \mbox{d}x 
 \rightarrow 1$$ as $m \rightarrow \infty$. So $\mathbb{P}(T'_{m,1} > \tau)$ is bounded above by a product converging to $\exp[-\gamma(1-\mu B(d))\tau]$ and below by a product converging to $\exp[-\gamma(1+\mu B(d))\tau]$ as $m \rightarrow \infty$. But $\mu>0$ was arbitrary, so $\mathbb{P}(T'_{m,1}\leq \tau) \rightarrow 1 - \exp(-\gamma\tau)$ as $m \rightarrow \infty$.
\end{proof}

\noindent
Now we will prove an alternate version of Theorem \ref{thm:arr} for $Y'_m(t)$. Lemma \ref{lem:arrfirstjump} established such a statement already for the first jump. The key component in the general proof will be the following lemma, which helps in describing how the densities develop after a jump, conditional on no further jump occurring, provided we start off close to the conditionally invariant density $f_{c,m}$.

\begin{lemma}\label{lem:jumpdensity}
Let $$A^{1}_m = \left(\dfrac{1}{4}-\dfrac{\varepsilon/m}{4(4+\varepsilon/m)},\dfrac{1}{4}\right)$$ denote the subset of $(0,1/4)$ mapped outside of $[0,1]$ by $F_m$. Take $0 < \mu < 1/4$ and let $g_m$ be a density with ${||g_m-f_{c,m}||_{\linfty}<\mu}$. Let $V_m$ be distributed according to that density and $g^\star$ denote the density corresponding to the distribution of $F_r(V_m;\varepsilon/m, \delta/m)$, conditional on $V_m \in A_m^1$.  Then there exists $B>0$, $M \in \mathbb{N}$ and $N \in \mathbb{N}$ such that for all $m \geq M$ and $n \geq N+{\lceil \log(m/\varepsilon)/\log(4+\varepsilon/m) \rceil}$  and for any sequence of $g_m$ satisfying above properties we have 
\begin{equation}\label{equ:lemjump}
||\mathcal{P}_m^n(g_m^\star)-f_{c,m}||_{\linfty} <\mu B.
\end{equation} 
\end{lemma}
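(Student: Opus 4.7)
The plan is to split the required $n \geq N + N_0$ iterations, where $N_0 := \lceil \log(m/\varepsilon)/\log(4+\varepsilon/m) \rceil$, into a \emph{spreading phase} of length $N_0$, which transforms the highly localised density $g^\star$ into a piecewise-nearly-constant density close (in $L^\infty$) to some element of $K_S$, and a \emph{relaxation phase} of length at least $N$, handled by Lemma~\ref{lem:const3} together with Corollary~\ref{lem:appyorke}. Since $g^\star$ has $L^\infty$-norm of order $m/\varepsilon$ and is not in $K_S$, neither Lemma~\ref{lem:const3} nor the Yorke-type bound of Corollary~\ref{lem:appyorke} applies directly to $g^\star$ itself, which is why a separate spreading argument is needed.

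First I would describe $g^\star$ explicitly. Since $F_m$ restricted to $A^{1}_m$ is the linear map $v \mapsto (4+\varepsilon/m)v - 1$, the conditional density equals
$$g^\star(t) = \frac{g_m\bigl((t+1)/(4+\varepsilon/m)\bigr)}{(4+\varepsilon/m)\int_{A^{1}_m} g_m(v)\, \mathrm{d}v} \quad \text{for } t \in (0, \varepsilon/(4m)),$$
and is zero outside this interval. Using $||g_m-f_{c,m}||_{\linfty}<\mu$ together with $f_{c,m}=\nu_m$ on $(0,1/2)$ and $\nu_m \to 1$ (Lemma~\ref{lem:cid}), one shows $g^\star$ is within a multiplicative factor $1+O(\mu)$ of the uniform density $(4m/\varepsilon)\mathbf{1}_{(0,\varepsilon/(4m))}$.

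Next, in the spreading phase, while the support of $\mathcal{P}^n_m(g^\star)$ remains inside $(0, 1/(4+\varepsilon/m))$, only the $F_1$-branch contributes to the Frobenius--Perron formula and no mass escapes, so the normalisation constant $C$ equals $1$ and $\mathcal{P}^n_m(g^\star)$ is, up to a factor $1+O(\mu)$, the uniform density of value $1$ on $(0, (4+\varepsilon/m)^n \varepsilon/(4m))$. The choice of $N_0$ is precisely so that this interval first reaches $1/4$; one or two additional iterations then let the other branches $F_2,F_3,F_4$ activate and redistribute the mass across $(0,1)$. A branch-by-branch calculation exploiting the piecewise-linear structure of $F_m$ then shows that $\mathcal{P}^{N_0}_m(g^\star)$ lies within $C_1\mu$ in $L^\infty$ of some $k_m \in K_S$, for a single $S$ bounded uniformly in $m$; moreover one verifies $\inf \mathcal{P}^{N_0}_m(g^\star) \geq s > 0$ uniformly in $m$, because every point in $(0,1)$ receives contributions from branches with bounded slope.

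For the relaxation phase, Corollary~\ref{lem:appyorke} applied with $f=\mathcal{P}^{N_0}_m(g^\star)$ and $g=k_m$ gives ${||\mathcal{P}^{N_0+j}_m(g^\star)-\mathcal{P}^j_m(k_m)||_{\linfty} \leq L C_1 \mu}$ for all $j\geq 1$ and $m$ large, while Lemma~\ref{lem:const3} applied to $k_m \in K_S$ supplies $N \in \mathbb{N}$ so that $||\mathcal{P}^j_m(k_m)-f_{c,m}||_{\linfty}<\mu$ for all $j\geq N$ and sufficiently large $m$; the triangle inequality then delivers the bound (\ref{equ:lemjump}) with $B=LC_1+1$. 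The principal obstacle is the single iteration at the end of the spreading phase in which the support transitions from a narrow interval to spanning $(0,1)$: careful bookkeeping of how each of the four branches of $F_m$ transports and recombines mass, and extracting from this both a $K_S$-approximation and a uniform lower bound on the resulting density, is the most delicate part of the argument.
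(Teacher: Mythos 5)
Your proposal follows essentially the same strategy as the paper's proof: track $\mathcal{P}^n_m(g^\star_m)$ through a ``spreading'' phase of length $u_m = \lceil \log(m/\varepsilon)/\log(4+\varepsilon/m) \rceil$ during which the Frobenius--Perron operator acts as a rescaling on a density supported in $(0,1/4)$; use $\|g_m - f_{c,m}\|_{\infty} < \mu$ together with $f_{c,m}$ being constant on $(0,1/2)$ to conclude the spread-out density is nearly constant on its support; approximate it by an element of $K_S$; and then combine the Pianigiani--Yorke stability estimate (Corollary~\ref{lem:appyorke}) with the uniform convergence of $K_S$-densities to $f_{c,m}$ (Lemma~\ref{lem:const3}). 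The paper obtains $B = 32L+1$; your $B = LC_1+1$ is the same with $C_1 = 32$.

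One imprecision worth flagging: you assert $\inf \mathcal{P}^{N_0}_m(g^\star) \geq s > 0$ uniformly in $m$. This cannot hold for $N_0 = u_m$, since after $u_m$ iterations the density is still supported on $\bigl(0, (\varepsilon/(4m))(4+\varepsilon/m)^{u_m}\bigr) \cap [0,1]$, which is only guaranteed to cover $(0,1/4)$; the infimum over all of $[0,1]$ is $0$, and moreover the resulting density is not yet within $O(\mu)$ of an element of $K_S$ in $\|\cdot\|_{\infty}$ unless one pads it with a zero piece. The paper fixes this by applying $\mathcal{P}_m$ once more: only $\mathcal{P}^{u_m+1}_m(g^\star_m)$ has full support, with explicit lower bound $s = 1/20$, and Corollary~\ref{lem:appyorke} is applied with $f = \mathcal{P}^{u_m+1}_m(g^\star_m)$. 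You do mention ``one or two additional iterations,'' so this is a matter of carrying the extra index through, but as written the positivity claim is attached to the wrong iterate. A smaller point: the normalisation constant need not be exactly $1$ at the last spreading step, since at step $u_m-1$ the support can already intersect $A^1_m$; the paper tracks these constants $c_n$ explicitly and only needs the lower bound $c_0 c_{u_m} \geq 1/16$, which your ``$1+O(\mu)$'' formulation elides.
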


\begin{proof}
Since $V_m \in A_m^{1}$, we have $F_r(V_m;\varepsilon/m,\delta/m) \in [0, \varepsilon/(4m)]$. On $(\varepsilon/(4m),1)$ we then have $g^\star_m=0$, while we have \begin{equation} \label{equ:jumpdensity}
g^\star_m(x) = \dfrac{1}{c_0}\left(g_m\left(\dfrac{1+x}{4+\varepsilon/m}\right)\right), \quad \mbox{ for } x \in \left(0,\dfrac{\varepsilon}{4m}\right),
\end{equation} where $c_0$ is a constant so that $g^\star_m$ integrates to $1$ over $[0,1]$. 
Let $u_m$ denote the smallest integer such that $F_r^{u_m+1}(1/4;\varepsilon/m,\delta/m)=(4+\varepsilon/m)^{u_m}\varepsilon/(4m) \geq 1/4$. Then $u_m = \lceil \log(m/\varepsilon)/\log(4+\varepsilon/m) \rceil$. From equation (\ref{def:frobenius}) we deduce that for $n \leq u_m$, density $\mathcal{P}^n_m(g^\star_m)$ is obtained from $g^\star_m$ via a scaling of the form \begin{equation} \label{equ:jumpdensity2} \mathcal{P}^n_m(g^\star_m)(x)=\dfrac{1}{c_n}\,g^\star_m\!\left(\dfrac{x}{(4+\varepsilon/m)^n}\right)=\dfrac{1}{c_0c_n}\left(g_m\left(\dfrac{1+x/(4+\varepsilon/m)^n}{4+\varepsilon/m}\right)\right),
\end{equation} where $c_n$ is a constant dependent on $m$, such that $\mathcal{P}^n_m(g^\star_m)(x)$ integrates to $1$ over $[0,1]$. By Lemma~\ref{lem:cid}, there exists $M_1 \in \mathbb{N}$ such that $m \geq M_1$ implies $1/2 < f_{c,m}< 3/2$. Since $\mu<1/4$, we obtain a bound $g_m(y) \geq 1/4$ and estimate \begin{equation} \label{equ:estimates}
\mathcal{P}^{u_m}_m(g_m^\star)(x) \geq  \dfrac{1}{4c_0c_{u_m}} \quad  \mbox{ for } x \in I_{u_m} =\left(0, \dfrac{\varepsilon}{4m}\left(4+\dfrac{\varepsilon}{m}\right)^{u_m}\right).
\end{equation} Using $(4+\varepsilon/m)^{u_m}\varepsilon/(4m)\geq 1/4$ and equation (\ref{equ:estimates}), we get $c_0c_{u_m} \geq 1/16$, since $\mathcal{P}^{u_m}_m(g_m^\star)$ must integrate to $1$ over $[0,1]$. But recall that $||g_m-f_{c,m}||_{\linfty}<\mu$ and $f_{c,m}$ constant on $(0,1/2)$. Combining this with equation (\ref{equ:jumpdensity2}) gives us that the values of $\mathcal{P}^{u_m}_m(g_m^\star)$ on $I_{u_m}$ are contained in a subinterval of $(0, \infty)$ of length $32 \mu$. 
But applying equation (\ref{def:frobenius}) again, we see that for $\mathcal{P}^{u_m+1}_m(g_m^\star)$ the unit interval can be split into three subintervals $(0,b_1)$, $(b_1,b_2)$ and $(b_2,1)$, where $b_1=1/2$ or $b_2=1/2$, on each of which the values of $\mathcal{P}^{u_m+1}_m(g_m^\star)$ are contained in an subinterval of $(0,\infty)$ of length $32\mu/C$. 
Here $C$ is the normalisation constant from equation (\ref{def:frobenius}). 
From $f_{c,m}<3/2$ and $\mu <1/4$ we get $g_m(y)<2$ and $\mathcal{P}^{u_m}_m(g^\star_m)(x)\leq 2/(c_0c_{u_m}) \leq 32$, since we deduced earlier from equation (\ref{equ:estimates}) that $c_0c_{u_m} \geq 1/16$. 
For $m$ large enough, say $m \geq M_2>M_1$, we will have $\ell(\varepsilon/m)+\ell(\delta/m)<1/64$, where $\ell$ is defined as in equation (\ref{equ:spikelength}). Considering the integral of $\mathcal{P}^{u_m}_m(g^\star_m)$ over the subset of $[0,1]$ mapped outside the unit interval by $F_m$, we obtain a lower bound of $1-32/64=1/2$ on $C$.  So the values of $\mathcal{P}^{u_m+1}_m(g_m^\star)$ on each of $(0,b_1)$, $(b_1,b_2)$, $(b_2,1)$ are contained in intervals of length $64\mu$. Using (\ref{def:frobenius}), calculations show that we can find a bound, independent of choice of $g_m$ with $||g_m-f_{c,m}||_{\linfty} <1/4$, on the range of values of $\mathcal{P}^{u_m+1}_m(g^\star_m)$ over all of $(0,1/2)$ and $(1/2,1)$ respectively. Call this bound $S$. 
We now  may choose a piecewise constant map $k_m \in K_{S}$, as defined in Definition \ref{def:pcd}, so that  $$||\mathcal{P}_m^{u_m}(g^\star_m)-k||_{\linfty}<32\mu.$$ Recalling that $f_{c,m} >1/2$, we get $g_m >1/4$, and equation (\ref{equ:jumpdensity2}) gives us lower bound of $1/4$ on $\mathcal{P}^{u_m}_m(g^\star_m)$ and by equation (\ref{def:frobenius}) a lower bound  on $\mathcal{P}^{u_m+1}_m(g_m^\star)$, which could be taken for example as $1/20$. We then apply Corollary \ref{lem:appyorke} with $f=\mathcal{P}_m^{u_m+1}(g^\star_m)$, $g=k_m$, $s=1/20$ and to find $L$ such that for $n \geq 1$ we have
\begin{equation}\label{equ:stayclose}
||\mathcal{P}_m^{u_m+n}(g^\star_m)-\mathcal{P}_m^n(k_m)||<32\mu L,
\end{equation} regardless of choice of $g_m$. 
By Lemma \ref{lem:const3} there exists $N$ such that for all $m \geq M_2$ and for $n \geq N$ we have $||\mathcal{P}^n_{m}(k_m)-f_{c,m}||_{\linfty}<\mu.$ 
Take $B=32L+1$ and $M=M_2$ to obtain equation (\ref{equ:lemjump}).
\end{proof}

\begin{lemma}\label{lem:jumpdensity2}
Let $A_m$ denote the subset of $[0,1]$ mapped outside of $[0,1]$ by $F_m$. Take $0 < \mu < 1/4$ and let $g_m$ be a density with ${||g_m-f_{c,m}||_{\linfty}<\mu}$. Let $V_m$ be distributed according to that density and $g^\star$ denote the density corresponding to the distribution of $F_r(V_m;\varepsilon/m, \delta/m)$, conditional on $V_m \in A_m$.  Then there exists $B>0$, $M \in \mathbb{N}$ and $N \in \mathbb{N}$ such that for all $m \geq M$ and $n \geq N+\lceil \log(m/\varepsilon)/\log(4+\varepsilon/m) \rceil$  and for any sequence of $g_m$ satisfying above properties we have 
\begin{equation}\label{equ:lemjump2}
||\mathcal{P}_m^n(g_m^\star)-f_{c,m}||_{\linfty} <\mu B.
\end{equation} 
\end{lemma}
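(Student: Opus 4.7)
The plan is to decompose $A_m$ into its four monotonic branches and reduce Lemma~\ref{lem:jumpdensity2} to four parallel applications of the argument of Lemma~\ref{lem:jumpdensity}. Write $A_m = A_m^1 \cup A_m^2 \cup A_m^3 \cup A_m^4$, where $A_m^i$ is the component of $A_m$ contained in the $i$-th monotone branch of $F_m$ on $[0,1]$ (so $A_m^1$ is the set considered in Lemma~\ref{lem:jumpdensity}). Set $\alpha_i = \mathbb{P}(V_m \in A_m^i \mid V_m \in A_m)$ and let $g_m^{\star,i}$ denote the density of $F_r(V_m;\varepsilon/m,\delta/m)$ conditional on $V_m \in A_m^i$. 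Then $\sum_i \alpha_i = 1$ and $g_m^\star = \sum_{i=1}^{4} \alpha_i\, g_m^{\star,i}$, so by linearity of $\mathcal{P}_m$ and the triangle inequality,
$$
\bigl\| \mathcal{P}_m^n(g_m^\star) - f_{c,m} \bigr\|_{\linfty} \leq \max_{1 \leq i \leq 4} \bigl\| \mathcal{P}_m^n(g_m^{\star,i}) - f_{c,m} \bigr\|_{\linfty}.
$$
It therefore suffices to establish a bound of the form $\mu B_i$ on each of the four quantities on the right.

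The case $i = 1$ is Lemma~\ref{lem:jumpdensity} itself. For $i = 2$, the set $A_m^2 \subset (1/4, 1/2)$ is mapped to $(1, 1 + \varepsilon/(4m))$ by $F_m$, so $g_m^{\star,2}$ is also supported on $(0, \varepsilon/(4m))$ and satisfies a formula analogous to (\ref{equ:jumpdensity}), but with $g_m$ evaluated on the neighbouring branch. The proof of Lemma~\ref{lem:jumpdensity} then transfers essentially verbatim: the same scaling behaviour persists for $n \leq u_m := \lceil \log(m/\varepsilon)/\log(4+\varepsilon/m) \rceil$, one further application produces a density within $O(\mu)$ of an element of $K_S$ with the same uniform positive lower bound, and Corollary~\ref{lem:appyorke} combined with Lemma~\ref{lem:const3} delivers the required estimate.

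For $i \in \{3,4\}$ the sets $A_m^i$ sit near $3/4$, the map $F_r$ sends them onto $(1 - \delta/(4m), 1)$, and the expansion factor governing subsequent iterations on the relevant branches is $4 + \delta/m$ rather than $4 + \varepsilon/m$. The spreading argument is the same in spirit but now controlled by $\delta$; the support becomes macroscopic after $u_m' := \lceil \log(m/\delta)/\log(4+\delta/m) \rceil$ iterations, after which a single further step yields a three-piece constant approximation lying in some $K_{S}$. Since $u_m' - u_m$ is uniformly bounded in $m$ (both behave as $\log_4 m + O(1)$), this discrepancy can be absorbed into an additive constant. Taking $B = \max_i B^{(i)}$ and $N = \max_i N^{(i)} + \sup_m|u_m' - u_m|$ then yields~(\ref{equ:lemjump2}) for every $n \geq N + u_m$.

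The main obstacle is not any single estimate but the bookkeeping required to transplant the proof of Lemma~\ref{lem:jumpdensity} to $i \in \{3,4\}$: densities are concentrated near $x = 1$ rather than near $x = 0$, so the iteration-by-iteration support tracking and the identity of the branches visited during the first $u_m'$ steps differ from the $i = 1$ case. In particular, the explicit lower bounds used in Lemma~\ref{lem:jumpdensity} (such as $g_m \geq 1/4$ on the scaled support and the $32\mu$ step-size estimate) must be re-derived in the new geometric setting to ensure that $\mathcal{P}_m^{u_m'+1}(g_m^{\star,i})$ lies within $O(\mu)$ of an element of $K_S$ with a uniform positive lower bound; without this, the final application of Corollary~\ref{lem:appyorke} would fail, breaking the triangle-inequality reduction.
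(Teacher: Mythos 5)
Your proposal matches the paper's proof in approach: the paper likewise partitions on the four events $V_m\in(0,1/4)$, $(1/4,1/2)$, $(1/2,3/4)$, $(3/4,1)$ and transfers the argument of Lemma~\ref{lem:jumpdensity} to each piece, and you correctly track the $\varepsilon$--versus--$\delta$ asymmetry and the discrepancy between $u_m$ and $u_m'$ that the paper leaves implicit. One imprecision worth flagging: you invoke ``linearity of $\mathcal{P}_m$'' to write $\mathcal{P}_m^n(g^\star_m)=\sum_i\alpha_i\mathcal{P}_m^n(g^{\star,i}_m)$, but $\mathcal{P}_{\varepsilon,\delta}$ as defined in~(\ref{def:frobenius}) includes a normalisation constant $C$ depending on the input density and is therefore \emph{not} linear. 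The inequality you want survives anyway because, writing $Q$ for the unnormalised transfer operator restricted to the non-escaping part and noting $\mathcal{P}_m^n(f)=Q^n(f)/\int_0^1 Q^n(f)$, one finds that $\mathcal{P}_m^n(g^\star_m)$ is still a convex combination of the $\mathcal{P}_m^n(g^{\star,i}_m)$ with $n$-dependent weights $\alpha_i^{(n)}=\alpha_i\int Q^n(g^{\star,i}_m)/\sum_j\alpha_j\int Q^n(g^{\star,j}_m)$, so the bound $\|\mathcal{P}_m^n(g^\star_m)-f_{c,m}\|_{\linfty}\le\max_i\|\mathcal{P}_m^n(g^{\star,i}_m)-f_{c,m}\|_{\linfty}$ holds. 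Aside from that wording, the argument is sound.
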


\begin{proof}
We partition on the four events $V_m \in (0,1/4)$, $V_m \in (1/4,1/2)$, $V_m \in (1/2,3/4)$ and ${V_m \in (3/4,1)}$, then apply the same arguments as for $V_m \in (0,1/4)$ in Lemma \ref{lem:jumpdensity}.
\end{proof}

 \begin{lemma}\label{ver:arr}
Let $\delta, \varepsilon >0$. Let $0<1/2<d$. For $m \in \mathbb{N}$ let $Y'_m(t)$ and $T'_{m,j}$, $j=1,2, \dots,$ be as described in Definition~{\rm\ref{def:alternatef}}. 
Then for any $k\geq 1$ and $0 < t_1< \dots <t_k$ we have 
$$\mathbb{P}\left(T'_{m,k+1} \leq \dfrac{\lfloor mt_k \rfloor}{m} + \tau \; \bigg| \; T'_{m,k} = \dfrac{\lfloor mt_k \rfloor}{m}, \dots, T'_{m,1}=\dfrac{\lfloor mt_1 \rfloor}{m} \right) \rightarrow 1-\exp(-\gamma\tau) $$ as $m \rightarrow \infty$, where $\gamma$ is given in equation~{\rm(\ref{equ:gamma})}.
\end{lemma}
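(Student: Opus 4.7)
The plan is to proceed by induction on $k$, using Lemma~\ref{lem:arrfirstjump} as the starting point (which handles the case of the first jump from initial density $f'_{i,m}$) and Lemma~\ref{lem:jumpdensity2} as the key tool for passing from one jump to the next. Let $E_k$ denote the conditioning event $\{T'_{m,j} = \lfloor mt_j\rfloor/m,\; j=1,\dots,k\}$ and let $h_{m,k}$ be the density of $F_r^{\lfloor mt_k\rfloor}(X'_m)$ on $[0,1]$ conditional on $E_k$. The key assertion I would prove by induction is the following uniformity statement: for every $\mu>0$ there exist $M\in\mathbb{N}$ and an integer sequence $n_m = O(\log m)$ such that $\|\mathcal{P}_m^{n_m}(h_{m,k}) - f_{c,m}\|_{\linfty} < \mu$ for all $m\ge M$.

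Granting this, the final-jump probability is computed exactly as in Lemma~\ref{lem:arrfirstjump}. Writing the escape set as $A_m$ with $m\,\lambda(A_m) \to \gamma$, one splits
$$\mathbb{P}\bigl(T'_{m,k+1} > \tfrac{\lfloor mt_k\rfloor}{m} + \tau \,\big|\, E_k\bigr)
= \Bigl(\prod_{n=1}^{n_m} + \prod_{n=n_m+1}^{\lfloor m\tau\rfloor}\Bigr)
\mathbb{P}\bigl(F_m^{n}(\cdot) \in [0,1) \,\big|\, E_k, F_m^{j}(\cdot) \in [0,1)\text{ for }j<n\bigr).$$
The first $n_m = O(\log m)$ factors contribute a subproduct converging to $1$ (since $\sup_m\|h_{m,k}\|_{\linfty}<\infty$ by the same estimates used for $f'_{i,m}$ in Lemma~\ref{lem:arrfirstjump}, and the set of points escaping within $n_m$ iterations has Lebesgue measure $O(n_m/m) = o(1)$). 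For the remaining factors, $\mathcal{P}_m^{n-1}(h_{m,k})$ is within $\mu$ of $f_{c,m}$, so $\nu_m\to 1$ gives upper and lower bounds of the form $\bigl(1-\gamma(1\mp\mu C)/m\bigr)^{\lfloor m\tau\rfloor - n_m}$, both converging to $\exp(-\gamma(1\mp\mu C)\tau)$; letting $\mu\to 0$ yields the claim.

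The inductive step for the density goes as follows. Decompose $E_k = E_{k-1}\cap \{T'_{m,k}=\lfloor mt_k\rfloor/m\}$. By the inductive hypothesis applied to the evolution over iterates $\lfloor mt_{k-1}\rfloor+1,\dots,\lfloor mt_k\rfloor-1$ conditional on non-escape, and because $t_k-t_{k-1}>0$ is fixed while $n_m = o(m)$, the density at iterate $\lfloor mt_k\rfloor-1$ conditional on $E_{k-1}$ and non-escape in between is a density $g_m$ with $\|g_m - f_{c,m}\|_{\linfty}<\mu$. Conditioning on $F_r^{\lfloor mt_k\rfloor-1}(X'_m)\in A_m$ (the $k$-th jump occurring) and then pushing forward one iterate by $F_r$ yields precisely the density $g_m^\star$ of Lemma~\ref{lem:jumpdensity2}. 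Applying Lemma~\ref{lem:jumpdensity2} with $g_m$ as above and $n = n_m := N+\lceil \log(m/\varepsilon)/\log(4+\varepsilon/m)\rceil$ produces $\|\mathcal{P}_m^{n_m}(h_{m,k}) - f_{c,m}\|_{\linfty} < \mu B$, closing the induction with a multiplicative constant $B$ independent of $g_m$.

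The hardest part will be carefully propagating the constant through the induction: each jump amplifies the error bound by a factor $B$ coming from Lemma~\ref{lem:jumpdensity2}, so after $k$ jumps the effective bound is $\mu B^k$; since $k$ is fixed while $\mu$ is free, this is harmless, but the argument requires Lemma~\ref{lem:jumpdensity2} in a form that accepts input densities with $L^{\infty}$-distance to $f_{c,m}$ up to some fixed threshold (say $<1/4$). A second subtle point is that the conditioning on the \emph{exact} jump times (rather than on non-escape intervals) forces us to work with densities normalized by dividing by the probability of the specified event; one must check that these normalizations are bounded away from $0$ uniformly in $m$, which follows because at each step the conditional escape probability is exactly $\ell(\varepsilon/m)+\ell(\delta/m)$ up to a factor close to $1$, and there are only finitely many conditioning events. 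Once these bookkeeping issues are handled, the bulk of the argument is simply a reprise of Lemma~\ref{lem:arrfirstjump}.
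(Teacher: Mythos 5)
Your overall architecture mirrors the paper's proof exactly: Lemma \ref{lem:arrfirstjump} supplies the base control $\|\mathcal{P}_m^n(f'_{i,m}) - f_{c,m}\|_{\linfty} \le \mu B(d)$, Lemma \ref{lem:jumpdensity2} is iterated once per jump to propagate closeness to $f_{c,m}$ at the cost of a factor $B$, the final jump probability is split at $n_m = S(m) = N + \lceil \log(m/\varepsilon)/\log(4+\varepsilon/m)\rceil$ into an initial subproduct going to $1$ and a tail subproduct sandwiched between $\exp[-\gamma(1\pm\mu B(d)B^k)\tau]$, and $\mu\to 0$ closes the argument. Your remarks about the $<1/4$ threshold for Lemma \ref{lem:jumpdensity2} and the $\mu B^k$ bookkeeping are exactly what the paper does.

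However, there is one genuine gap: the claim that $\sup_m\|h_{m,k}\|_{\linfty}<\infty$ ``by the same estimates used for $f'_{i,m}$.'' This is false. The density $h_{m,k}$ is the density of $F_r^{\lfloor mt_k\rfloor}(X'_m)$ conditional on the $k$-th jump just having occurred, i.e.\ it is the $g_m^\star$ of Lemma \ref{lem:jumpdensity2}: it is supported on an interval of length of order $\varepsilon/(4m)$ near $0$ or $1$, so $\|h_{m,k}\|_{\linfty}$ grows like $m/\varepsilon$, and the uniform $L^\infty$ bound that the paper has for $f'_{i,m}$ (coming from the explicit invariant-density formula of Lemma \ref{lem:invden}) does not transfer. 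Consequently your argument ``$\lambda(A_m^{n_m}) = O(n_m/m) = o(1)$ and the density is bounded, so the escape integral is small'' collapses; a priori, the whole mass of $h_{m,k}$ could sit inside $A_m^{n_m}$. The paper's actual argument for the first $n_m$ factors is different and uses the structure of the post-jump dynamics: for the first $\approx u_m$ iterates the concentrated density is pushed along linear branches and stays supported away from the escape set $A_m$, so those factors are exactly $1$; and for $u_m \le n \le S(m)$ the push-forwards are within $32\mu B(d)B^k L$ of a piecewise constant density in $K_S$, hence bounded by some absolute constant $b$, giving a per-factor lower bound $1 - b(\ell(\varepsilon/m)+\ell(\delta/m))$. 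Since there are only $O(\log m)$ such factors and $\ell(\varepsilon/m)+\ell(\delta/m) = O(1/m)$, the subproduct still tends to $1$. Your proposal needs to replace the false bounded-density claim with this two-phase argument (no escape before $u_m$; bounded density afterward), which is precisely where Lemma \ref{lem:jumpdensity} and the set $K_S$ earn their keep.
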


\begin{proof}
We first describe how the density corresponding to  $F_r^{\lfloor mt \rfloor}(X'_m;\varepsilon/m, \delta/m)$ develops, conditional on $T'_{m,k} = \lfloor mt_k \rfloor/m, \dots, T'_{m,1}=\lfloor mt_1 \rfloor/m$.  Let $\mu>0$. Using equation (\ref{equ:firstjump3}), there exist $N_1 \in \mathbb{N}$ and $M_1 \in \mathbb{N}$  so that $n \geq N_1$ and $m \geq M_1$ implies $||\mathcal{P}^n_m(f'_{i,m})-f_{c,m}||_{\linfty} \leq \mu B(d).$
  For large enough $m$ we will have  $\lfloor m t_1 \rfloor  >N_1$. Choosing $\mu$ small enough, we have that $\mu B(d)<1/4$ and so can apply Lemma \ref{lem:jumpdensity2} with $g_m = \mathcal{P}_m^{\lfloor m t_1 \rfloor -1}(f'_{i,m})$. Write $g^{(1)}_m=g^\star_m$ for the density after the jump at time $\lfloor mt_1 \rfloor/m$. There exists $N \in \mathbb{N}$ and $B>0$ such that for large enough $m$ and $n \geq N+ \lceil \log(m/\varepsilon)/\log(4+\varepsilon/m)\rceil=N+S(m)$ 
$$\left|\left|\mathcal{P}^n_m\big(g^{(1)}_m\big)-f_{c,m}\right|\right|_{\linfty} <\mu B(d)B.$$ Between time $\lfloor t_jm \rfloor/m$ and $\lfloor t_{j+1}m \rfloor/m$, the density of $F^{\lfloor mt \rfloor}_r(X'_m;\varepsilon/m, \delta/m)$ develops as given by applying operator $\mathcal{P}_m$. Provided that $\mu <B^{-k}/4$ and $m$ is large enough that $N + S(m) < \lfloor m t_{j+1} \rfloor - \lfloor m t_{j} \rfloor$ for $j=1,2, \dots, k$, 
we can iteratively apply Lemma \ref{lem:jumpdensity2} with $g_m=\mathcal{P}_m^{\lfloor m t_{j+1} \rfloor - \lfloor m t_{j} \rfloor -1}\big(g_m^{(j)}\big)$, where $g_m^{(j)}$ is the density after the $j$-th jump has occurred, at time $\lfloor mt_j \rfloor/m$. 
Then for large enough $m$ and $n \geq N+ S(m)$
we in particular find 
\begin{equation} \label{equ:finalproof1} \left|\left|\mathcal{P}^n_m\big(g_m^{(k)}\big)-f_{c,m}\right|\right|_{\lbinfty} \leq \mu B(d) B^k.
\end{equation}
But $\mathcal{P}^n_m\big(g_m^{(k)}\big)$ describes the densities of $F_r^{\lfloor mt \rfloor}(X'_m;\varepsilon/m, \delta/m)$  conditional on $T'_{m,k} = \lfloor mt_k \rfloor/m,$ $\dots,$ $T'_{m,1}=\lfloor mt_1\rfloor/m$ and no further jump occurring.
Choose $\tau>0$. Then $$\mathbb{P}\left(T^k_m > t_k+\tau\; \Bigg| \; T_m^{k} = \dfrac{\lfloor mt_k \rfloor}{m}, \dots, T_m^{1}=\dfrac{\lfloor mt_1 \rfloor}{m}\right)=\prod^{\lfloor m \tau \rfloor}_{n=1}\int_{[0,1] \setminus A_m} \!\!\!\!\!\!\!\!\!\mathcal{P}^{n-1}_{m}\big(g^{(k)}_m\big)(x)\,\mbox{d}x,$$ where $A_m$ is defined as in Lemma \ref{lem:jumpdensity2}.
Since equation (\ref{equ:finalproof1}) holds, we can use the same arguments as in the proof of Lemma \ref{lem:arrfirstjump} to find lower and upper bounds on $$ \prod^{\lfloor m \tau \rfloor}_{n=S(m)+1}\int_{[0,1] \setminus A_m}\!\!\! \mathcal{P}^{n-1}_{m}\big(g^{(k)}_m\big)(x)\, \mbox{d}x$$ converging to $\exp[-\gamma (1+\mu B(d)B)\tau]$ and $\exp[-\gamma (1-\mu B(d)B)\tau]$, respectively, as $m \rightarrow \infty$. 
 Using equation (\ref{equ:stayclose}) from Lemma \ref{lem:jumpdensity}, there are $l_m \in K_S$ and $L>0$ such that $$\left|\left|\mathcal{P}^{u_m+n}_m\big(g_m^{(k)}\big)-\mathcal{P}^n_m(l_m)\right|\right|_{\lbinfty}<32\mu B(d)B^kL,$$ for $n \geq 1$, implying that $\mathcal{P}^{n}_{m}\big(g^{(k)}_m\big)$ stays close to a piecewise constant function in $K_S$ as soon as jumps are possible. This tells us that an upper bound $b$ on the densities $\mathcal{P}^{n}_{m}\big(g^{(k)}_m\big)$ can be found, valid for all $n\geq u_m$ and all $m$ large enough. But then 
$$\left(1-b(\ell(\varepsilon/m)+\ell(\delta/m))\right)^{S(m)} \leq \prod^{S(m)}_{n=1}\int_{[0,1] \setminus A_m} \!\!\!\!\!\! \mathcal{P}^{n-1}_{m}\big(g^{(k)}_m\big)(x)\,\mbox{d}x \leq 1.$$
The expression on the left converges to $1$ as $m \rightarrow \infty$ since $S(m)$ grows like $\log$. But combining this with our earlier bounds with limits $\exp[-\gamma(1\pm \mu B(d)B)\tau]$ and letting $\mu \rightarrow 0$, we find that $$\mathbb{P}\left(T'_{m,k+1} \leq \dfrac{\lfloor mt_k \rfloor}{m}+ \tau \; \Big| \; T'_{m,k} = \dfrac{\lfloor mt_k \rfloor}{m}, \dots, T'_{m,1}=\dfrac{\lfloor mt_1 \rfloor}{m}\right) \rightarrow 1-\exp(-\gamma\tau) \quad \mbox{ as } m \rightarrow \infty.$$
\end{proof}

\noindent
Theorem \ref{thm:arr} is now simply a corollary of Lemma \ref{ver:arr}.
Let $Y_m(t)$ and $Y'_m(t)$ be as defined in Theorem \ref{thm:arr} and Definition \ref{def:alternatef} respectively, recalling that $Y'_m$ depends on a choice of $0<d<1/2$. Write $E$ and $E'$ for events  
$ \big( T_{m,k} = \lfloor mt_k \rfloor/m, \dots, T_{m,1}=\lfloor mt_1 \rfloor /m \big)$ and $ \big( T'_{m,k} = \lfloor mt_k \rfloor/m, \dots, T'_{m,1}=\lfloor mt_1 \rfloor /m \big)$,
 respectively. By definition of initial distributions of $Y_m$ and $Y'_m$, $X_m$ and $X'_m$,  with underlying densities $f_{i,m}$ and $f'_{i,m}$, we have that 
$$\mathbb{P}\left(T_{m,k+1} \leq \dfrac{\lfloor mt_k \rfloor}{m}+ \tau \Big| E, X_m \in [d,1-d]\right) = \mathbb{P}\left(T'_{m,k+1} \leq \dfrac{\lfloor mt_k \rfloor}{m} +\tau \Big| E', X'_m \in [d,1-d]\right).$$
We have already noted in the proof of Lemma \ref{lem:arrfirstjump} that $\sup_m||f_{i,m}||_{\linfty}<\infty$ and $\sup_m||f'_{i,m}||_{\linfty}<\infty$. Conditioning on the events $X_m, X'_m \in [d,1-d]$ and $X_m, X'_m \notin [d,1-d]$, we get
\begin{align*}
\bigg|\mathbb{P}&\Big(T_{m,k+1} \leq \dfrac{\lfloor mt_k \rfloor}{m} +\tau \Big| E \Big)- \mathbb{P}\Big(T'_{m,k+1} \leq \dfrac{\lfloor mt_k \rfloor}{m} +\tau \Big| E' \Big)\bigg|\\
 &\leq \mathbb{P}(X_m \notin [d,1-d]) + \mathbb{P}(X'_m \notin [d,1-d]) \,\leq\, 2d\left(\sup_m||f_{i,m}||_{\linfty}+\sup_m||f'_{i,m}||_{\linfty}\right).
\end{align*}
Since $0<d<1/2$ was chosen arbitrarily, we can then apply Lemma \ref{ver:arr} and let $d \to 0$  to conclude the proof.
\hfill{$\square$}

\section{Discussion}

We have seen in Section~\ref{sec:rw} that the behaviour of the trajectories of a shift-periodic map $F$ which satisfies the integer spike condition of Definition~\ref{def:is} can be described in terms of a discrete-time random walks for suitable initial distributions. Unlike other works in the literature, such as~\cite{Baldwin:1990:CCP, Boyarsky:2012:LCH}, our proofs are also valid for maps with singularities. As a result, the random variables obtained by taking integer parts can have infinite higher order moments, and we observe that a variety of interesting stochastic processes can arise in scaling limits.

 The varied behaviour of the trajectories of the maps discussed in this paper, however, also demonstrates that it is difficult, if not impossible, to make statements about the behaviour of trajectories which apply to all shift-periodic maps. 
Some of the possible issues (such as complicated expressions for invariant densities) can be seen in the proofs of Section~\ref{sec:CRW}. However, the ideas and proof strategies of Section~\ref{sec:CRW} can be extended to other parameter-dependent families of shift-periodic maps with small holes. For instance, if $F(x;\varepsilon/m, \delta/m)$  is replaced in Theorem~\ref{thm:arr} by a sequence of shift-periodic maps 
${F_m:\mathbb{R} \rightarrow \mathbb{R}^\infty}$ such that
the  conditionally invariant density of $F_m$ on interval $[0,1]$ converges 
uniformly to $1$, as $m \to \infty$, and they satisfy
$\lambda\{ x\in [0,1]: F_m(x) \notin [0,1] \} \rightarrow 0$ and 
$m\lambda\{ x\in [0,1]: F_m(x) \notin [0,1]  \} \rightarrow \gamma$, as 
$m \to \infty$,  we again obtain behaviour like that of a continuous-time 
random walk in a limit, with waiting times distributed 
according to an exponential distribution with mean $1/\gamma$.

\vskip6pt

\enlargethispage{0pt}


\begin{thebibliography}{99}
\itemsep 0mm
\bibitem{Poincare:1890:RT}
Poincar\'e H.   Sur le probl\'eme des trois corps et les \'equations de la
  dynamique. {\em Acta Mathematica} \textbf{13}, 1--270, 1890.

\bibitem{Birkhoff:1931:ET}
Birkhoff GD.   Proof of the Ergodic theorem. 
{\em Proceedings of the National Academy of Sciences of the USA}
  \textbf{17}, 656--660, 1931.
  
\bibitem{Neumann:1932:QET}
von Neumann J.   Proof of the quasi-ergodic hypothesis. 
{\em Proceedings of the National Academy of Sciences of the USA} \textbf{18}, 70--82, 1932.

\bibitem{Kaplan:1979:CBM}
Kaplan J, Yorke JA.  Chaotic behavior of multidimensional difference
  equations. {\em Lecture Notes in Mathematics,} Springer \textbf{730}, 204--227, 1979.

\bibitem{Li:1975:P3}
Li TY, Yorke JA.  Period three implies chaos. {\em The American
  Mathematical Monthly} \textbf{82}, 985--992, 1975. 

\bibitem{May:1976:SMM}
May R.  Simple mathematical models with very complicated dynamics. {\em
  Nature} \textbf{261}, 459--467, 1976.



\bibitem{Korabel:2002:FSM}
Korabel N and Klages R. 
Fractal structures of normal and anomalous diffusion in nonlinear nonhyperbolic dynamical systems.
{\em Physical Review Letters}
\textbf{89}, 214102, 2002.

\bibitem{Korabel:2004:FSM}
Korabel N and Klages R. 
Fractality of deterministic diffusion in the nonhyperbolic climbing sine map.
{\em Physica D: Nonlinear Phenomena}
\textbf{187}, 66--88, 2004.

\bibitem{Geisel:1982:PRL}
Geisel T, Nierwetberg J. 
Onset of diffusion and universal scaling in chaotic systems.
{\em Physical Review Letters}
\textbf{48}, 7--10, 1982.

\bibitem{Grossmann:1982:PhA}
Grossmann S, Fujisaka H. 
Diffusion in discrete nonlinear dynamical systems.
{\em Physical Review A}
\textbf{26}, 1779--1782, 1982.

\bibitem{Grossmann:1982:PhB}
Grossmann S, Fujisaka H. 
Chaos-induced diffusion in nonlinear discrete dynamics.
{\em Zeitschrift f\"ur Physik B Condensed Matter}
\textbf{48}, 261--275, 1982.

\bibitem{Schell:1982:PhA}
Schell M, Fraser S, Kapral R. 
Diffusive dynamics in systems with translational symmetry: A one-dimensional-map model.
{\em Physical Review A}
\textbf{26}, 504--521, 1982.

\bibitem{Pomeau:1980:ITT}
Pomeau Y, Manneville P. 
Intermittent transition to turbulence in dissipative dynamical systems. 
{\em Communications in Mathematical Physics}
\textbf{74}, 189--197, 1980.

\bibitem{Geisel:1985:FPP}
Geisel T, Nierwetberg J, Zacherl A. 
Accelerated diffusion in josephson junctions and related chaotic systems. 
{\em Physical Review Letters}
\textbf{54}, 616--619, 1985.

\bibitem{Klafter:1993:DGD}
Klafter J, Zumofen G. 
Dynamically generated enhanced diffusion: The stationary state case. 
{\em Physica A}
\textbf{196}, 102--115, 1993.


\bibitem{Klafter:1993:LDS}
Klafter J, Zumofen G, Shlesinger M. 
L\'evy walks in dynamical systems. 
{\em Physica A}
\textbf{200}, 222--230, 1993.

\bibitem{Misiurewicz:1984:PPC}
Misiurewicz M. 
Periodic points of maps of degree one of a circle. 
{\em Ergodic Theory and Dynamical Systems}
\textbf{2}, 221--227, 1982.

\bibitem{Misiurewicz:1984:TSC}
Misiurewicz M.  
Twist sets for maps of the circle.
{\em Ergodic Theory and Dynamical Systems}
\textbf{4}, 391--404, 1984.

\bibitem{Klages:2010:LFD}
Knight G, Klages R. 
Linear and fractal diffusion coefficients in a family of one dimensional chaotic maps.
{\em Nonlinearity} \textbf{24}, 227--241, 2011.

\bibitem{Klages:2012:DCD}
Knight G, Georgiou O, Dettmann C, Klages R. 
Dependence of chaotic diffusion on the size and position of holes.
{\em Chaos} \textbf{22}, 023132, 2012.

\bibitem{Klages:2007:FPP}
Korabel N, Klages R, Chechkin A, Sokolov I, Gonchar V. 
Fractal properties of anomalous diffusion in intermittent maps. 
{\em Physical Review E}
\textbf{75}, 036213, 2007.

\bibitem{Klages:2015:SDS}
Salari L, Rondoni L, Giberti C, Klages R. 
A simple non-chaotic map generating subdiffusive, diffusive, and superdiffusive dynamics.
{\em Chaos}
\textbf{25}, 073113, 2015.



\bibitem{Beck:1987:DSL}
Beck C, Roepstorff G.   From dynamical systems to the {L}angevin equation.
  {\em Physica A} \textbf{145}, 1--14, 1987.

\bibitem{Beck:1990:BMD}
Beck C.   Brownian motion from deterministic dynamics. {\em Physica A}
  \textbf{169}, 324--336, 1990.

\bibitem{Mackey:2006:DBM}
Mackey M, Tyran-Kami\'nska M.  Deterministic {B}rownian motion: The effects
  of perturbing a dynamical system by a chaotic semi-dynamical system. {\em
  Physics Reports} \textbf{422}, 167--222, 2006.
   
\bibitem{Tyran:2014:DDS}
Tyran-Kami\'nska M.  Diffusion and deterministic systems. {\em
Mathematical Modelling of Natural Phenomena} \textbf{9}, 139--150, 2014.

\bibitem{Baldwin:1990:CCP}
Baldwin S.   A complete classification of the piecewise monotone functions
  on the interval. {\em Transactions of the American Mathematical Society}
  \textbf{319}, 155--178, 1990.
  

\bibitem{Boyarsky:2012:LCH}
Boyarsky A, G\'{o}ra P.  {\em Laws of chaos:  Invariant  measures  and  dynamical  systems  in  one  dimension.}  Springer Science \& Business Media, 2012.

\bibitem{Gottwald:2013:HPD}
Gottwald G, Melbourne I.  A Huygens principle for diffusion and anomalous diffusion in spatially extended systems. {\em Proceedings of the National Academy of Sciences of the USA} \textbf{110,} 8411--8416, 2013.

\bibitem{Atkinson:1976:RCC}
Atkinson G.  Recurrence of co-cycles and random walks. {\em Journal of the London Mathematical Society } \textbf{13,} 486--488,
1976.

\bibitem{Schmidt:2006:RCC}
Schmidt K.  Recurrence of cocycles and stationary random walks. {\em Lecture Notes -- Monograph Series }  78--84, 2006.



\bibitem{Stadlbauer:2004:SSP}
Denker M, Stadlbauer M.  Semiconjugacies for skew products of interval maps. {\em Dynamics of Complex Systems,} RIMS Kokyuroku series, 12-20, 2004.

\bibitem{Aaronson:1997:IIE}
Aaronson J.  {\em An introduction to infinite ergodic theory.} Mathematical Surveys and Monographs, \textbf{50}, A.M.S., Providence, RI, 1997.

  

\bibitem{Fotiades:2000:TCP}
Fotiades N, Boudourides M.  Topological conjugacies of piecewise monotone interval maps. {\em International Journal of Mathematics and Mathematical Sciences}
\textbf{25}, 119-127, 2000. 



\bibitem{Whitt:2002:SPL}
Whitt W.  {\em Stochastic-process limits: An introduction to
  stochastic-process limits and their application to queues}.
Springer, 2002.

\bibitem{Billingsley:1999:CPM}
Billingsley P.  {\em Convergence of probability measures}.
Wiley Series in Probability and Statistics: Probability and Statistics. New
  York: John Wiley \& Sons Inc, 1999.
  

\bibitem{Gnedenko:1954:LDS}
Gnedenko BV, Kolmogorov AN.  {\em Limit distributions for sums of
  independent random variables}.
Addison-Wesley, Reading, MA, 1954.

\bibitem{Uchaikin:1999:CST}
Uchaikin VV, Zolotarev VM.  {\em Chance and stability, stable distributions
  and their applications}.
De Gruyter, 1999. 



\bibitem{Gouezel:2004:CLT}
Gou\"{e}zel S.  Central limit theorem and stable laws for intermittent maps. {\em Probability Theory and Related Fields} \textbf{128,} 82--122, 2004.


\bibitem{Tyran:2010:CLP}
Tyran-Kami\'nska M.  Convergence to L\'evy stable processes under some weak dependence conditions. {\em Stochastic Processes and their Applications}. \textbf{120,} 1629--1650, 2010.

\bibitem{Tyran:2010:WCL}
Tyran-Kami\'nska M.  Weak convergence to L\'evy stable processes in dynamical systems. {\em Stochastics and Dynamics}. \textbf{10,}
263--289, 2010.

\bibitem{Gouezel:2010:ASI}
Gou\"{e}zel S.  Almost sure invariance principle for dynamical systems by spectral methods. {\em The Annals of Probability}. \textbf{38,} 1639--1671, 2010.

\bibitem{Melbourne:2015:WCL}
Melbourne I, Zweim\"uller R.  Weak convergence to stable L\'evy processes for nonuniformly hyperbolic dynamical systems. {\em Annales de l'Institut Henri Poincar\'e} \textbf{51,} 545--556, 2015.



\bibitem{Gora:2008:IPL}
G\'{o}ra P.   Invariant densities for piecewise linear maps of the unit
  interval. {\em Ergodic Theory and Dynamical Systems} \textbf{29}, 1549--1583, 2008.
  
\bibitem{Pianigiani:1979:EMS}
Pianigiani G, Yorke JA.   Expanding maps on sets which are almost invariant:
  Decay and chaos. {\em Transactions of the American Mathematical Society}
  \textbf{252}, 351--366, 1979.

\bibitem{Demers:2005:ERA}
Demers M, Lai-Sang Y.   Escape rates and conditionally invariant measures.
  {\em Nonlinearity} \textbf{19}, 377--397, 2006.
  
\end{thebibliography}
\end{document}